\newcommand{\mylabel}[2]{#2\def\@currentlabel{#2}\label{#1}}
\newcounter{counterConstant}
\numberwithin{equation}{section}
\def\theequation{\arabic{section}.\arabic{equation}}
\newtheorem{theorem}{Theorem}[section]
\newtheorem{lemma}[theorem]{Lemma}
\newtheorem{remark}[theorem]{Remark}
\newtheorem{definition}[theorem]{Definition}
\newtheorem{proposition}[theorem]{Proposition}
\newtheorem{corollary}[theorem]{Corollary}
\newtheorem{assumption}{Assumption}
\newtheorem{example}{Example}
\def\cA{{\mathcal A}}
\def\cC{{\mathcal C}}
\def\cD{{\mathcal D}}
\def\cF{{\mathcal F}}
\def\cL{{\mathcal L}}
\def\cP{{\mathcal P}}
\def\cR{{\mathcal R}}
\def\mE{{\mathbb E}}
\def\mN{{\mathbb N}}
\def\mP{{\mathbb P}}
\def\mQ{{\mathbb Q}}
\def\mR{{\mathbb R}}
\def\mS{{\mathbb S}}
\def\bE{{\mathbf E}}
\def\bP{{\mathbf P}}
\def\sC{{\mathscr C}}
\def\sF{{\mathscr F}}
\def\sL{{\mathscr L}}
\def\sP{{\mathscr P}}
\def\sS{{\mathscr S}}
\def\sX{{\mathscr X}}
\def\l{\left}
\def\r{\right}
\def\<{\langle}
\def\>{\rangle}
\def\geq{\geqslant}
\def\leq{\leqslant}
\def\1{{\mathbf{1}}}
\def\p{\partial}
\def\d{\text{\rm{d}}}
\def\e{\mathrm{e}}
\def\eps{\varepsilon}
\def\Re{{\mathrm{Re}}}
\begin{document}

\title{Non-local operators with low singularity kernels: regularity estimates and martingale problem}

\author{Eryan Hu and Guohuan Zhao}

\address{Center for Applied Mathematics and KL-AAGDM, Tianjin University, Tianjin, 300072, China}
\email{eryan.hu@tju.edu.cn}

\address{Institute of Applied Mathematics, Academy of Mathematics and Systems Science, CAS, Beijing, 100190, China}
\email{gzhao@amss.ac.cn}

\begin{abstract} 
We consider the linear non-local operator $\mathcal{L}$ denoted by
\[
\mathcal{L} u (x) = \int_{\mathbb{R}^d} \left(u(x+z)-u(x)\right) a(x,z)J(z)\,\mathrm{d} z. 
\]
Here $a(x,z)$ is bounded and $J(z)$ is the jump kernel of a L\'evy process, which only has a low-order singularity near the origin and does not allow for standard scaling. The aim of this work is twofold. Firstly, we introduce generalized Orlicz-Besov spaces tailored to accommodate the analysis of elliptic equations associated with $\mathcal{L}$, and establish regularity results for the solutions of such equations in these spaces. Secondly, we investigate the martingale problem associated with $\mathcal{L}$. By utilizing analytic results, we prove the well-posedness of the martingale problem under mild conditions. Finally, we obtain a new Krylov-type estimate for the martingale solution through the use of a Morrey-type inequality for generalized Orlicz-Besov spaces.
\end{abstract}

\maketitle

\tableofcontents

\noindent \textbf{Keywords}: Non-local operator, Orlicz space, Besov space, Morrey inequality,  Subordinate Brownian motion, Martingale problem, Krylov estimate 

\noindent {\bf AMS 2020 Mathematics Subject Classification: Primary 35B65; Secondary 60J76 35R09}
\section{Introduction} 

Over the past two decades, extensive research efforts have been dedicated to investigating a class of non-local operators related to jump Markov processes, called $\alpha$ stable-like operators, which can be formulated as: 
\begin{equation}\label{eq:stable}
	\sL u (x)= \int_{\mR^d} \l(u(x+z)-u(x)- \nabla u(x)\cdot z \1_{B_1}(z)\r)  \frac{a(x,z)}{|z|^{d+\alpha}}\,\d z, \quad \alpha\in (0,2)
\end{equation}  
(cf. \cite{bass1988uniqueness}). Here $a: \mR^d\times\mR^d\to \mR$ is a bounded measurable function, and $u\in C_b^\infty$.  When $a$ is smooth, it is known that there exists a pure jump Markov process $X$ generated by $\sL$, and $a(x,z)|z|^{-d-\alpha}$ is the intensity of jumps from $x$ to $x+z$ (cf. \cite{jacob2005pseudo3}).

The exploration of regularity properties for linear and non-linear  equations corresponding to the above  non-local operators has been the subject of extensive investigation. In particular, Schauder estimates for linear equations were investigated in \cite{bae2015schauder, bass2009regularity, dong2013schauder,   ling2022nonlocal, mikulevicius2014cauchy}, while the non-linear cases were studied in \cite{ dong2018dini, dong2017dini}; $L^p$-estimates for non-local equations are available in \cite{dong2023boundedness, dong2012lpestimates, mikulevivcius1992cauchy, zhang2013maximal};  Harnack inequalities for positive $\sL$-harmonic functions and H\"older continuity of $\sL$-harmonic functions were first established in \cite{bass2002harnack} under the assumption that the function $a$ satisfies certain boundedness conditions. For more related results, readers can refer to \cite{caffarelli2011regularity,caffarelli2009regularity, chen2003heat, dyda2020regularity, kassmann2009priori} and the references therein.  

In the context of jump Markov processes, several studies have focused on the martingale problem for $\sL$ given by \eqref{eq:stable}, including \cite{abels2009cauchy, bass2009martingale, chen2016uniqueness, mikulevicius2014cauchy}, among others. Related problems involving stochastic differential equations driven by rectilinear $\alpha$-stable processes were also explored in \cite{bass2006systems} and \cite{chen2021supercritical}.

\medskip

In this paper, we consider a class of non-local operators including the following one: 
\begin{equation}\label{Eq:Lu}
  L u (x) = \int_{\mR^d} \l(u(x+z)-u(x)\r) a(x,z) \frac{\1_{B_1}(z)}{|z|^d}\,\d z. 
\end{equation}
Here $a:\mR^d\times \mR^d\to \mR$ is a non-negative bounded measurable function and $B_1$ is the unit ball in $\mR^d$. 

Comparing  \eqref{eq:stable} and \eqref{Eq:Lu}, one can roughly think of \eqref{Eq:Lu} as being about the ``$\alpha=0$" case, which turns out to be a more challenging situation. Although there is limited research on this type of operators compared to the stable case, we have noted that several significant papers have examined issues related to operators like \eqref{Eq:Lu} from various perspectives, highlighting their important applications in different fields. Notably,  {\v{S}}iki{\'c}-Song-Vondra{\v{c}}ek \cite{sikic2006potential} first studied the potential theory of  subordinate Brownian motions with geometric stable subordinators, whose generator is given by $\log(I+(-\Delta)^{\alpha/2})$ (below we will see that $\log(I+(-\Delta)^{\alpha/2})$  is a variant of $L$).  They established the asymptotic behaviors of the Green function and the L\'evy density of these processes.  Among the PDE articles, Chen-Weth \cite{chen2019dirichlet} demonstrated that 
\begin{equation*}
	\begin{aligned}
		&\frac{\d }{\d s}(-\Delta)^s u(x) \Big|_{s=0} = \log (-\Delta) u(x)\\
		=& -c_d\int_{\mR^d} (u(x+z)-u(x)) \frac{\1_{B_1}(z)}{|z|^d}~\d z-c_d\int_{|z|\geq 1} \frac{u(x+z)}{|z|^d}~\d z+ \rho_d u(x), 
	\end{aligned}
\end{equation*}
where $c_d$ and $\rho_d$ are constants only depending on $d$. So the dominating term of $\frac{\d }{\d s}(-\Delta)^s\big|_{s=0}$ is $-c_d L$, as defined in \eqref{Eq:Lu} with $a=1$. In the same work, the authors studied the eigenvalue of Dirichlet problem for the logarithmic Laplacian. Later, Jarohs-Salda\~na-Weth \cite{jarohs2020new} offered a novel look at the fractional Poisson problem via the logarithmic Laplacian given above.  We also mention that there is some literature revealing the  importance of logarithmic regularization operator $T:= \log^{-1}(eI-\Delta)$ ($T^{-1}$ is also a variant of $L$). For instance, $T$ is widely used in the studies of the local (global) well-posedness of the regularized inviscid models, such as the regularized $2$D  Euler equation, the surface quasi-geostrophic equation, and the Boussinesq equations  (see \cite{chae2012logarithmically} and the references therein).

\smallskip

Our work is partially inspired by  Kassmann-Mimica\cite{kassmann2017intrinsic}, where Kassmann and Mimica  considered a much larger class of non-local operators including \eqref{eq:stable} and \eqref{Eq:Lu}.  To overcome the difficulties arising from the lack of the scaling invariant property of $L$, they introduced a new intrinsic scale, which can be used to establish a growth lemma for $L$ and a modification of hitting probability estimate for Markov process corresponding to $L$. As an application, $\log^{-\beta}(1/r)$-order (for some $0<\beta<1$) regularity estimate for solutions to non-local elliptic equation $L u=f$ was obtained under the assumption that the coefficient $a$ only satisfies some boundedness conditions (see also \cite{mimica2014harmonic} for the case $a=1$). In parallel, the study conducted by Chen-Zhang \cite{chen2014holder} delves into parabolic equations with drift terms. However, an unresolved question remains: can the continuity modulus of $u$ reach $\log^{-1}(1/r)$ when we impose continuity conditions on $a$, along with the condition that $Lu\in L^\infty$? 

On the other hand, while the well-posedness of the martingale problem for the $\alpha$ stable-like operator $\sL$ ($0<\alpha<2$) as defined in equation \eqref{eq:stable} has been extensively studied in various literature, to the best of our knowledge, there is a notable absence of research on the martingale problem for $L$. However, the existence and uniqueness of martingale solutions associated with $L$ appear as premise assumptions in some works, such as \cite{chen2014holder} and \cite{kassmann2017intrinsic}. So it is necessary to provide sufficient conditions for the establishment of these assumptions. 

Therefore, in this paper, we aim to address the following two questions: 
\begin{enumerate}[({\bf Q}$_1$)]
	\item Does the solution $u$ to the equation $L u = f$ exhibit additional smoothness under certain continuity conditions satisfied by $a$? 
	\item  Is it possible to identify a unique strong Markov process $X$, for which the infinitesimal generator matches $L$ when acting upon smooth functions, even when $a$ only meets some minimal conditions? 
\end{enumerate}

Recall that, for a second order, uniformly elliptic operator $A u=\sum_{i,j} a_{ij}\p_{ij}u$, the standard Schauder estimates and $L^p$-estimates are as follows: 
\begin{enumerate}[(a)]
	\item For any $\alpha\in (0,1)$, there is a constant $C$ such that for each $u\in C^{2,\alpha}$,
	\begin{equation*}
		\|u\|_{C^{2+\alpha}} \leq C \l(\|A u\|_{C^{\alpha}}+ \|u\|_\infty\r), 
	\end{equation*}
	provided that $a\in C^\alpha$; 
	\item For any $p\in (1,\infty)$, there is a constant $C$ such that for each $u\in W^{2}_{p}$, 
\end{enumerate}
\begin{equation*}
	\|u\|_{W^{2}_{p}} \leq C \l(\|A u\|_p+\|u\|_p\r), 
\end{equation*}
provided that $a$ is uniformly continuous (see \cite{han2011elliptic}). Both of these results are useful in the study of diffusion processes. In particular, (b) can be directly used to prove the uniqueness of the martingale problem corresponding to $A$ when $a$ is merely continuous (see Stroock and Varadhan \cite{stroock2007multidimensional}). 

\smallskip

Following the path of Stroock and Varadhan, in response to ({\bf Q}$_1$) and ({\bf Q}$_2$), in this work, we 
\begin{itemize}
	\item offer a series of analytical results for the elliptic equation 
	$$
	\lambda u-Lu=f  
	$$
	encompassing Schauder-type estimates (see Theorem \ref{Thm:Main1} (a) below). 
	\item construct the strong Markov process associated with $L$ (see Theorem \ref{Thm:Main2}). 
	\item provide an a priori estimate for the solutions to the corresponding  elliptic equation within a generalized Orlicz-Besov space (see Lemma \ref{Le-Main1}), facilitating the derivation of a Krylov-type estimate (or occupation time estimate) for the jump processes (see Theorem \ref{Thm:Main1} (b)). 
\end{itemize} 

\subsection{Main results and examples}
Although an important example of an operator we have in mind is \eqref{Eq:Lu}, we will consider a more general class of non-local operators that naturally arise from jump Markov processes. To state our main theorems, we must first introduce some fundamental concepts from probability theory. Specifically, let \( S = (S_t)_{t \geq 0} \) denote a driftless subordinator, which is a L\'evy process with non-decreasing paths and non-negative jumps. Its behavior is fully characterized by its L\'evy measure \( \Pi \), defined on \( (0, \infty) \), satisfying the condition \( \int_{0}^{\infty} (1\wedge t) \, \Pi(\d t) < \infty \). We assume that the Laplace exponent for \( S \) is given by 
\begin{equation}\label{eq:rept-phi}
	\phi(s)= -\log \bE \e^{-s S_1}=\int_0^\infty (1-\e^{-s t}) \Pi(\d t)= s \int_0^\infty \e^{-s t}  \Pi((t,\infty))\d t. 
\end{equation}
Let $B$ be a $d$-dimensional Brownian motion independent of $S$. Set 
\[
Z_t = \sqrt{2}B_{S_t}, 
\]
which is a pure jump L\'evy process known as the subordinate Brownian motion corresponding to $S$. Its jump kernel $J(z)$ is rotationally symmetric, meaning that there is a function $j:(0,\infty)\to (0,\infty)$ such that 
\begin{equation}\label{Eq:j}
    J(z)=j(|z|)= \int_{0}^{\infty} (4\pi t)^{-\frac{d}{2}}\e^{-\frac{|z|^2}{4t}}~ \Pi(\d t). 
\end{equation}
The L\'evy exponent of $Z$ is given by the formula
\begin{equation*}
	\psi(\xi):=-\log \bE \e^{i \xi\cdot Z_1}=\phi(|\xi|^2).
\end{equation*}
With a slight abuse of notation, the function $\psi$ can be viewed as a function from $\mathbb{R}_+$ to $\mathbb{R}_+$. Given $\psi$, we put 
\begin{equation}\label{Eq:rho}
	\rho(r) = \rho_\psi(r):= \frac{1}{\psi(r^{-1})}. 
\end{equation} 

We study the following spatial inhomogeneous non-local operator: 
\begin{equation}\label{Eq:Oprt}
	\cL u = \int_{\mR^d} \l(u(x+z)-u(x)\r)\,a(x,z)J(z)\,\d z.
\end{equation}

Our assumptions on $\phi$ (or $\Pi$) and $a$  are 
\begin{assumption}
	\begin{enumerate}
        \item[\mylabel{Aspt:J-1}{$\bf (\Phi)$}]: 
        $\phi$ is a slowly varying function at infinity (see Definition \ref{Def-SVF}) and $\lim_{s\to\infty}\phi(s)=\infty$; 
		\item[\mylabel{Aspt:a-1}{$\bf (A_{1})$}]:
		There are positive real numbers $\rho_0, c_0 \in (0,1)$ such that 
		\begin{equation}
			a(x,z)\geq c_0, \ \mbox{ for every }x\in \mR^d, z\in B_{\rho_0}=\{x\in \mR^d: |x|<\rho_0\} \tag{L$_{a}$} \label{aspt:a-lower}; 
		\end{equation}
		\item[\mylabel{Aspt:a-2}{$\bf (A_{2})$}]: There are real numbers $\alpha>0, c_0\in (0,1)$ such that  
		\begin{equation}
			\|a(\cdot,z)\|_{C^{\alpha}_{\rho}}\leq c_0^{-1}, \ \mbox{ for every } z\in \mR^d,  \tag{H$_{a}$} \label{Aspt:a-holder}
		\end{equation}
		where $\rho$ is given by \eqref{Eq:rho} and $C^\alpha_\rho$ is the generalized H\"older-type space defined in Section \ref{Sec-OB} (Definition \ref{Def-Holder}). 
	\end{enumerate}
\end{assumption}

We emphasize that many important examples satisfy the above conditions, such as the core examples in this paper: \(\log(I-\Delta)\) and \(L\) in \eqref{Eq:Lu}. 

We now can formulate our first main result: 
\begin{theorem}[Schauder-type estimates]\label{Thm:Main1}
    Suppose that $\phi$ satisfies \ref{Aspt:J-1}, and $a$ satisfies \ref{Aspt:a-1}-\ref{Aspt:a-2}. Then there exists a constant $\lambda_0>0$ such that for each $\lambda\geq 2\lambda_0$ and $f\in C_{\rho}^{\alpha}$, the following Poisson equation
	\begin{equation}\label{Eq:PE}
		\lambda u-\cL u =f \tag{PE}
	\end{equation}
	admits a unique solution in $C_{\rho}^{1+\alpha}$. Moreover, for any $\beta\in [0,\alpha]$,

	\begin{equation*}
		\lambda \|u\|_{C_{\rho}^{\beta}}+\|u\|_{C_{\rho}^{1+\beta}} \leq C \|f\|_{C_{\rho}^{\beta}}, 
	\end{equation*}
	where $C$ only depends on $d, \psi, \rho_0, c_0$, $\alpha$ and $\beta$. 
 \end{theorem}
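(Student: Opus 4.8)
The plan is to run a continuity method / perturbation argument built on a sharp a priori estimate for the constant-coefficient operator, so the first task is to analyze the model operator obtained by freezing $a(x,z)$ at a point. Writing $\mathcal{L}^0 u = \int_{\mathbb{R}^d}(u(x+z)-u(x))a(x_0,z)J(z)\,\mathrm{d}z$, or more precisely the translation-invariant comparison operator whose symbol is a multiple of $\psi$ (this is where \ref{Aspt:J-3} enters, controlling $J$ by $|z|^{-d-1}\psi'(|z|^{-1})$, and where \ref{Aspt-a-1} gives the nondegeneracy $a\geq c_0$ on $B_{\rho_0}$), I would establish that $\lambda - \mathcal{L}^0$ is invertible from $C_\rho^{\alpha}$ onto $C_\rho^{1+\alpha}$ for $\lambda$ large, with the scale-invariant bound
\begin{equation*}
\lambda\|u\|_{C_\rho^{\beta}} + \|u\|_{C_\rho^{1+\beta}} \leq C\|(\lambda-\mathcal{L}^0)u\|_{C_\rho^{\beta}},\qquad \beta\in[0,\alpha].
\end{equation*}
This is the heart of the matter: the generalized Hölder spaces $C_\rho^\beta$ are precisely designed (via the profile $\rho(r)=1/\psi(r^{-1})$, which replaces the usual $r^\alpha$ modulus since $\phi$ is only slowly varying) so that the resolvent of the model operator gains "one derivative in the $\rho$-scale." I expect this to be proved by a Littlewood–Paley / Bernstein-type decomposition adapted to the scale $\rho$, estimating the Fourier multiplier $(\lambda+\psi(\xi))^{-1}$ and its derivatives block by block; assumption \ref{Aspt:J-2} (complete monotonicity type condition on the inverse Laplace transform of $1/(\lambda+\phi)$) should furnish the pointwise kernel positivity/decay needed to make these convolution estimates honest, and \ref{Aspt:J-1} guarantees $\psi$ does not decay too fast so that $\lambda$ genuinely dominates.

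With the model estimate in hand, I would freeze coefficients at an arbitrary point $x_0$, write $\lambda u - \mathcal{L}u = (\lambda u - \mathcal{L}^{x_0}u) + (\mathcal{L}^{x_0} - \mathcal{L})u$, and treat the commutator term as a perturbation. The term $(\mathcal{L}^{x_0}-\mathcal{L})u(x) = \int (u(x+z)-u(x))(a(x_0,z)-a(x,z))J(z)\,\mathrm{d}z$ is small near $x_0$ because of the Hölder continuity \ref{Aspt-a-2}: $|a(x_0,z)-a(x,z)|\lesssim \rho(|x-x_0|)^{?}$ in the appropriate $C_\rho^\alpha$-sense, and this smallness, combined with the a priori estimate, yields a localized interior estimate $\|u\|_{C_\rho^{1+\alpha}(B)}\lesssim \|f\|_{C_\rho^\alpha} + \|u\|_\infty$ by the usual freezing-and-covering argument (partition of unity subordinate to small balls, summing the local estimates using that the $C_\rho^{1+\alpha}$ norm localizes). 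The absorption of the lower-order $\|u\|_\infty$ term into the left side is achieved by taking $\lambda$ large, which is exactly the role of the threshold $\lambda_0$.

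For existence and uniqueness I would use the method of continuity along the family $\mathcal{L}_t = (1-t)\mathcal{L}^0 + t\mathcal{L}$, $t\in[0,1]$: the set of $t$ for which $\lambda-\mathcal{L}_t : C_\rho^{1+\alpha}\to C_\rho^\alpha$ is onto is open (by the uniform a priori bound, which holds for every $\mathcal{L}_t$ with the same constant) and closed, and nonempty at $t=0$ by the model analysis, hence equals $[0,1]$; uniqueness is immediate from the a priori estimate applied to the difference of two solutions. Finally, the full family of bounds $\lambda\|u\|_{C_\rho^\beta}+\|u\|_{C_\rho^{1+\beta}}\leq C\|f\|_{C_\rho^\beta}$ for $\beta\in[0,\alpha]$ follows either by interpolation between $\beta=0$ and $\beta=\alpha$ in the generalized Hölder scale, or by rerunning the same perturbation argument with $\beta$ in place of $\alpha$ throughout. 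The main obstacle, as indicated, is the constant-coefficient multiplier estimate: because $\phi$ is merely slowly varying there is no parabolic-type scaling, so one cannot reduce to a fixed frequency annulus, and the block estimates must be carried out with explicit control of how $\psi'(r^{-1})$ and $\rho(r)$ interact — this is precisely the technical novelty the generalized Orlicz–Besov framework of Section \ref{Sec-OB} is built to handle, and I would lean on its embedding and product-rule lemmas rather than reproving them here.
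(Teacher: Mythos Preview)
Your proposal is correct and follows essentially the same route as the paper: establish the constant-coefficient a priori estimate in $C_\rho^\beta\cong\sC_\psi^\beta$ via the $\psi$-dyadic decomposition (Theorem \ref{Thm:Main0}), then localize by a cutoff at each $x_0$, treat $(\cL-\cL^{x_0})u$ as a small perturbation using the oscillation of $a$ and a paraproduct-type estimate (the paper's Lemma with bound \eqref{Eq:Lu-A2}), absorb via interpolation and the choice of small radius, and conclude existence by continuity. The only point you leave implicit is that after multiplying by the cutoff $\chi_\eps^{x_0}$ one must also control the commutator $\cL_0(u\chi_\eps)-(\cL_0 u)\chi_\eps-u(\cL_0\chi_\eps)$, which the paper handles separately as a lower-order term; this is routine once noted.
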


 \begin{remark}
	\begin{enumerate}[(i)]
		\item 
        The method employed in this article is quite robust. With some modifications, most of the results can be readily extended to the case where $\phi$ is a general regularly varying function (see Definition \ref{Def-SVF}). However, for the sake of readability of the article, we only consider the case where $\phi$ is slowly varying at infinity. 
		\item 
		$\rho(|x-y|)$ can be regarded as the ``intrinsic" distance between $x$ and $y$, corresponding to the operator $\cL$. Consider the example where $\phi(s)=\sqrt{s}$, then  $\cL=-\sqrt{-\Delta}$ and $\rho(|x-y|)$ represents the usual Euclidean distance. In this context, the result analogous to Theorem \ref{Thm:Main1} is: for any $\alpha\in(0,1)$, 
		\[
		\|u\|_{C^{1+\alpha}} \leq C \l( \|\sqrt{-\Delta} u\|_{C^{\alpha}}+ \|u\|_{L^\infty} \r), 
		\]
		which was proved by Bass in \cite{bass2009regularity}. 
		\item 
		In this paper, we also establish some estimates for \eqref{Eq:PE} in Orlicz-Besov spaces, considering cases where $a$ is spatially homogeneous or satisfies a small oscillation condition. Refer to Theorem \ref{Thm:Main0} and Lemma \ref{Le-Main1} for details.
	\end{enumerate}
\end{remark}

Theorem \ref{Thm:Main1} implies that 
\begin{corollary}\label{Cor:Log}
	Let $c_0\in (0,1)$, $\alpha>0$ and $\rho(r)=1/\log(1+\frac{1}{r})$. Suppose that $c_0\leq a\leq c_0^{-1}$ and $|a(x,z)-a(x',z)|\leq c_0^{-1} \rho^\alpha(|x-x'|)$, then for each $\beta\in [0,\alpha]$, it holds that 
	\begin{equation*}
		\|u\|_{C_{\rho}^{1+\beta}} \leq C \l(\|L u\|_{C_{\rho}^{\beta}}+\|u\|_{\infty}\r), \quad \forall u\in C_{\rho}^{1+\beta}. 
	\end{equation*}
	Here $L$ is the operator defined in \eqref{Eq:Lu}, and $C$ only depends on $d, \rho_0, c_0$, $\alpha$ and $\beta$. 
\end{corollary}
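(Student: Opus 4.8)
\emph{Proof idea.} The plan is to recognize $L$ from \eqref{Eq:Lu} as (a bounded perturbation of) an operator of the form \eqref{Eq:Oprt} for a suitable subordinate Brownian motion, apply Theorem~\ref{Thm:Main1}, and convert the a priori estimate it provides into the stated one by an interpolation-and-absorption argument. The correct underlying subordinator is the Gamma subordinator $S$, with Laplace exponent $\phi(\lambda)=\log(1+\lambda)$ (by Frullani's identity its L\'evy measure is $t^{-1}\e^{-t}\,\d t$). Then $Z=\sqrt2\,B_S$ has L\'evy exponent $\psi(\xi)=\phi(|\xi|^2)=\log(1+|\xi|^2)$, so the function \eqref{Eq:rho} is $\rho_\psi(r)=1/\log(1+r^{-2})$, which is comparable to $\rho(r)=1/\log(1+r^{-1})$ near the origin (their ratio tends to $2$). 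Since the generalized H\"older seminorm is a small-scale condition, the spaces built on $\rho$ and on $\rho_\psi$ coincide with equivalent norms; in particular the hypotheses $c_0\le a\le c_0^{-1}$ and $|a(x,z)-a(x',z)|\le c_0^{-1}\rho^\alpha(|x-x'|)$ give \ref{Aspt-a-1} (with $\rho_0=\tfrac12$, say) and \ref{Aspt-a-2} (with $c_0$ replaced by a comparable constant).

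Next I would verify \ref{Aspt:J-1}--\ref{Aspt:J-3}. \ref{Aspt:J-1} is clear, $\log(1+\lambda)$ being slowly varying with limit $\infty$. For \ref{Aspt:J-2}, the representation $\phi(\lambda)=\int_1^\infty\frac{\lambda}{\lambda+t}\,\frac{\d t}{t}$ shows $\phi$ is a complete Bernstein function; hence $\lambda+\phi$ is complete Bernstein for each $\lambda>0$, so $1/(\lambda+\phi)$ is a Stieltjes function, whose inverse Laplace transform is completely monotone --- in particular non-negative and non-increasing. For \ref{Aspt:J-3}, note $\psi'(|z|^{-1})=2|z|/(1+|z|^2)\asymp|z|$ for $|z|<1$, so $|z|^{-d-1}\psi'(|z|^{-1})\asymp|z|^{-d}$: if the framework of Theorem~\ref{Thm:Main1} permits $J$ to be any kernel dominated as in \eqref{aspt:j-upper}, then $\1_{B_1}(z)|z|^{-d}\le C|z|^{-d-1}\psi'(|z|^{-1})$ is immediate and no perturbation is needed. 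Otherwise, let $J_0$ denote the L\'evy density of $Z$ and $c_d=\pi^{-d/2}\Gamma(d/2)$ its leading coefficient at the origin; I would replace $L$ by $\cL_0u(x)=\int_{\mR^d}(u(x+z)-u(x))\,a(x,z)\,c_d^{-1}J_0(z)\,\d z$, which is of the form \eqref{Eq:Oprt} for the subordinator with exponent $c_d^{-1}\phi$ and still satisfies all assumptions. A short computation from $J_0(z)=\int_0^\infty(4\pi t)^{-d/2}\e^{-|z|^2/(4t)}\,t^{-1}\e^{-t}\,\d t$ shows $J_0(z)-c_d|z|^{-d}$ is integrable near $0$ in every dimension $d\ge1$, while $J_0$ decays exponentially at infinity, so $g(z):=\1_{B_1}(z)|z|^{-d}-c_d^{-1}J_0(z)$ lies in $L^1(\mR^d)$. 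Hence $R:=L-\cL_0$ is an integral operator with $L^1$ kernel times the bounded coefficient $a$, and $\|Ru\|_{C^\beta_\rho}\le C(d,\alpha,\beta,c_0)\|u\|_{C^\beta_\rho}$.

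To finish, fix $\beta\in[0,\alpha]$ and $u\in C^{1+\beta}_\rho$, take $\lambda=2\lambda_0$ (the threshold of Theorem~\ref{Thm:Main1} for $\cL_0$), and set $f:=\lambda u-\cL_0u=\lambda u-Lu+Ru\in C^\beta_\rho$ (using $C^{1+\beta}_\rho\hookrightarrow C^\beta_\rho$, the hypothesis $Lu\in C^\beta_\rho$, and $Ru\in C^\beta_\rho$). Theorem~\ref{Thm:Main1}, applied as an a priori estimate for solutions of \eqref{Eq:PE}, gives $\|u\|_{C^{1+\beta}_\rho}\le C\|f\|_{C^\beta_\rho}\le C\l((2\lambda_0+C_R)\|u\|_{C^\beta_\rho}+\|Lu\|_{C^\beta_\rho}\r)$. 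Combining this with the interpolation inequality $\|u\|_{C^\beta_\rho}\le\eps\|u\|_{C^{1+\beta}_\rho}+C_\eps\|u\|_\infty$ for $\eps$ small, and absorbing the top-order term into the left side, yields $\|u\|_{C^{1+\beta}_\rho}\le C(\|Lu\|_{C^\beta_\rho}+\|u\|_\infty)$ with $C=C(d,c_0,\alpha,\beta)$.

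I expect the real work to lie in a few verification points rather than in the structure: \ref{Aspt:J-2} is the only assumption that genuinely constrains the choice of subordinator, and identifying the inverse Laplace transform through the complete-Bernstein/Stieltjes chain is the clean route to it; the small-$|z|$ asymptotics of the variance-Gamma density $J_0$ must be pinned down carefully in every dimension to obtain $g\in L^1$ (and one should first decide whether Theorem~\ref{Thm:Main1} already accepts $J=\1_{B_1}/|z|^d$ verbatim, which would make the perturbation step unnecessary); and one must check against the definitions in Section~\ref{Sec-OB} that the comparability of $\rho$ and $\rho_\psi$ near the origin really identifies $C^\gamma_\rho$ with $C^\gamma_{\rho_\psi}$ and that the interpolation inequality used above is available in that scale. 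A minor point: the estimate in Theorem~\ref{Thm:Main1} should be read as an a priori estimate valid for every $u\in C^{1+\beta}_\rho$, which is what is used when $\beta<\alpha$.
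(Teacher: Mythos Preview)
Your proposal is correct and follows the same overall strategy as the paper --- recognise $L$ as (essentially) an operator of the form \eqref{Eq:Oprt} for the Gamma subordinator, invoke Theorem~\ref{Thm:Main1}, and then run an interpolation--absorption to replace $\|u\|_{C^\beta_\rho}$ by $\|u\|_\infty$. The verification of \ref{Aspt:J-1}--\ref{Aspt:J-3} via the complete-Bernstein/Stieltjes chain is clean and correct, and your observation that $\rho$ and $\rho_\psi$ differ only by a bounded factor near the origin, hence define the same H\"older scale, is exactly what is needed.

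There is one simplification you missed that the paper exploits (see Example~2): rather than introducing a perturbation $R=L-\cL_0$ with an $L^1$ kernel, one can absorb the ratio of kernels into the coefficient. Writing the Gamma jump kernel as $J_0(z)=|z|^{-d}\ell(|z|)^{-1}$ with $\ell(r)\asymp 1$ as $r\to 0^+$, one has
\[
Lu(x)=\int_{\mR^d}\bigl(u(x+z)-u(x)\bigr)\,\widetilde a(x,z)\,J_0(z)\,\d z,\qquad \widetilde a(x,z):=a(x,z)\,\1_{B_1}(z)\,\ell(|z|).
\]
Since $\ell$ is bounded above and below on $B_1$, the new coefficient $\widetilde a$ inherits \ref{Aspt-a-1} (with some $\rho_0<1$) and \ref{Aspt-a-2} directly from $a$. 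This puts $L$ \emph{exactly} in the form \eqref{Eq:Oprt} with $J=J_0$, so Theorem~\ref{Thm:Main1} applies verbatim and no perturbation step is needed. Your alternative route via $R$ is valid but slightly heavier; in particular the careful small-$|z|$ analysis of $J_0(z)-c_d|z|^{-d}$ you flag as ``real work'' becomes unnecessary. The final step you describe --- applying Theorem~\ref{Thm:Main1} as an a~priori estimate at $\lambda=2\lambda_0$ and absorbing via $\|u\|_{C^\beta_\rho}\le\eps\|u\|_{C^{1+\beta}_\rho}+C_\eps\|u\|_\infty$ --- is correct and is indeed required to obtain the stated form of the inequality.
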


In Section \ref{subsec:subordinator}, we will verify that the following key examples satisfy our assumptions. 
\begin{example}\label{ex:gamma}
    One notable example of \(S\) is the gamma subordinator, whose L\'evy measure, density and Laplace exponent are given by 
    \[
        \Pi(\d t) = \e^{-t}t^{-1} \d t, ~ \bP(S_t=u) = \frac{1}{\Gamma(t)}u^{t-1} \e^{-u}  ~\mbox{ and }~  \phi(s)=\log (1+s), 
    \]
    respectively. In this case, the process $Z=\sqrt{2}B_{S_t}$ is called the variance gamma process whose infinitesimal generator is 
    \[
	\cL u(x)= -\log(I-\Delta) u(x) =\int_{\mR^d} \l(u(x+z)-u(x)\r)\frac{\d z}{|z|^d\ell(|z|)}.
    \]
    Here $\ell:(0,\infty)\to (0,\infty)$ satisfies ${\ell(r)}\asymp 1~(r\to 0^+)$ (see more details after Proposition \ref{Prop:j-regular}). 
\end{example}

\begin{example}\label{ex:sde}
    Another example is the infinitesimal generator of the solution to stochastic differential equation (SDE) driven by {\em variance gamma process} $Z_t=\sqrt{2}B_{S_t}$:
	\begin{equation}\label{Eq:SDE}
		\d X_t =\sigma (X_{t-})\,\d Z_t, \quad X_0=x.
	\end{equation}
    Assume that $\sigma: \mR^d\to \mR^{d\times d}$ is invertible. Then the infinitesimal generator $\cL_\sigma$ of $X$ is formulated by 
    \begin{equation}\label{Eq:L-sde}
	\begin{aligned}
            \cL_\sigma u(x)=&\int_{\mR^d} \l(u(x+\sigma(x)z)-u(x)\r) \underbrace{\frac{1}{|z|^d\ell(|z|)}}_{J(z)}~\d z\\
            =& \int_{\mR^d} (u(x+z)-u(x)) \underbrace{ \frac{|z|^d\ell(|z|)}{|\det \sigma(x)|\cdot|\sigma^{-1}(x)z|^{d}  \ell(|\sigma^{-1}(x)z|)}}_{a(x,z)} J(z) \d z. 
	\end{aligned}
    \end{equation}
    When \(\sigma\) is \(\rho^\alpha\)-continuous and \(c_0 |\xi| \leq |\sigma \xi|\leq c_0^{-1}|\xi|\), for all \(\xi\in\mR^d\), then \(a(x,z)\) satisfies \ref{Aspt:a-1}-\ref{Aspt:a-2} (see the discussion after Proposition \ref{Prop:j-regular}). 
\end{example}

Although we consider general non-local operators with low singularity kernels in our study, it is worth mentioning that one of examples we have in mind is the operator $L$ defined in \eqref{Eq:Lu}. 
\begin{example}\label{ex:logLaplace}
	We can rewrite $L$ in \eqref{Eq:Lu} as 
	\[
	Lu(x)= \int_{\mR^d} (u(x+z)-u(x))\underbrace{a(x,z)\1_{B_1}(z)\ell(|z|)}_{\widetilde{a}(x,z)} \underbrace{\frac{1}{|z|^d\ell(|z|)}}_{J(z)}~\d z,  
	\]
	where $J$ is the jump kernel of the variance gamma process given in Example \ref{ex:gamma}. $\widetilde{a}$ satisfies \ref{Aspt:a-1}-\ref{Aspt:a-2} if $a$ satisfies the same conditions, due to the fact that $\ell(r)$ is bounded from below and above (see also the discussion after Proposition \ref{Prop:j-regular}). 
\end{example}

Our second main result concerns the weak well-posedness of equation \eqref{Eq:SDE} and relies on the martingale problem associated with non-local operators of the form \eqref{Eq:Oprt}, which includes $\cL_\sigma$.

\begin{theorem}\label{Thm:Main2}
    Suppose \ref{Aspt:J-1} and \ref{Aspt:a-1}-\ref{Aspt:a-2}  are satisfied. Then  
	\begin{enumerate}[(a)]
		\item for each $x\in \mR^d$, the martingale problem $(\cL, \delta_x)$ has a unique solution $\mP_x$, and the family $(\mP_x,X)_{x\in\mR^d}$ forms a strong Markov process on $\mR^d$; 
		\item  for any $N$-function $A$ (see Definition \ref{Def-Young}) satisfying $A(t)\gtrsim_\eps  [\psi^{-1}(t^{1+\eps})]^d$ $(\forall \eps>0)$, the following Krylov-type estimate is valid
		\begin{equation}\label{Eq:Krylov1}
			\begin{aligned}
				\mE_x \int_0^\infty \e^{-\lambda t} f(X_t)\,\d t\leq C \|f\|_A/\lambda, \quad \lambda>0 \mbox{ and } f\in L_A.  
			\end{aligned}  
		\end{equation}
		Here $\|f\|_A$ is the Luxemburg norm of $f$ with respect to $A$ (see Definition \ref{Def-Luxemburg}), and $C$ does not depend on $f$.  
	\end{enumerate}  
\end{theorem}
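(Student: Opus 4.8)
The plan is to read off both parts of the theorem from the analytic results already in hand, organizing everything around the resolvent $R_\lambda$, where $R_\lambda f := u$ is the solution of \eqref{Eq:PE} furnished by Theorem \ref{Thm:Main1}. First I would set up existence and the Markov structure analytically, then prove uniqueness through an extended Dynkin formula, and finally derive the Krylov estimate from an Orlicz--Besov a priori bound combined with a Morrey embedding.

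For existence in (a): the operator $\cL$ trivially satisfies the positive maximum principle, since if $u\in C_\rho^{1+\alpha}$ attains a nonnegative maximum at $x_0$ then $u(x_0+z)-u(x_0)\le 0$ and $a,J\ge 0$ force $\cL u(x_0)\le 0$; moreover $\cL$ maps $C_\rho^{1+\alpha}\cap C_0$ into $C_0(\mR^d)$ by dominated convergence. By Theorem \ref{Thm:Main1}, for $\lambda\ge 2\lambda_0$ the range of $\lambda-\cL$ contains $C^\alpha_\rho\supseteq C_c^\infty$, hence is dense in $C_0$. A Hille--Yosida--Ray argument then shows the closure of $\cL$ generates a Feller semigroup $(P_t)$, which is conservative because $\cL 1=0$, and the associated Feller family $\{X,\mP_x\}$ solves the martingale problem $(\cL,\delta_x)$ for test functions in $C_c^\infty$ (equivalently, one may construct $\mP_x$ by tightness of mollified approximations and pass to the limit). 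For uniqueness, let $\mP_x$ be any solution, fix $\lambda\ge 2\lambda_0$ and $f\in C^\alpha_\rho$, and put $u=R_\lambda f\in C_\rho^{1+\alpha}$. The point is to verify that $M_t:=u(X_t)-u(X_0)-\int_0^t\cL u(X_s)\,\d s$ is a $\mP_x$-martingale despite $u$ being only intrinsically Hölder; I would do this by mollifying $u$ and cutting off outside a large ball $B_R$, applying the $C_c^\infty$-martingale property, and then using the sub-Lévy domination $a(x,z)J(z)\le c_0^{-1}J(z)$ (which controls the jump activity of $X$ by that of a fixed Lévy process, hence the exit times of $B_R$) to let $R\to\infty$, with $\cL\colon C_\rho^{1+\alpha}\to C^\alpha_\rho\subset C_b$ justifying the limit $\cL u_n\to\cL u$. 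Granting this, $\e^{-\lambda t}u(X_t)+\int_0^t\e^{-\lambda s}f(X_s)\,\d s$ is a martingale, so taking $\mE_x$ and $t\to\infty$ gives $u(x)=\mE_x\int_0^\infty\e^{-\lambda t}f(X_t)\,\d t$, a quantity independent of the chosen solution; a monotone class argument extends this to all bounded Borel $f$, so all solutions share the same $\lambda$-potentials. By the standard fact that the $\lambda$-potentials determine the one-dimensional, hence (via the Markov property of solutions) the finite-dimensional, distributions, $\mP_x$ is unique; measurability of $x\mapsto\mP_x$ and the strong Markov property then follow from well-posedness.

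For (b): since $X$ is the Feller process built in (a), $\mE_x\int_0^\infty\e^{-\lambda t}f(X_t)\,\d t=R_\lambda f(x)$ for every $\lambda>0$ and bounded Borel $f$, so it suffices to prove $\|R_\lambda f\|_\infty\le C\|f\|_A/\lambda$. For $\lambda\ge 2\lambda_0$ I would combine two inputs. First, Lemma \ref{Le-Main1} gives an a priori estimate for \eqref{Eq:PE} in the generalized Orlicz--Besov space $\mathbf B$ attached to the $N$-function $A$ and the intrinsic scale $\rho$; because that lemma is stated for spatially homogeneous or small-oscillation coefficients, I would first freeze the coefficient and run a localization/Neumann-series argument on small intrinsic balls, using $\|a(\cdot,z)\|_{C^\alpha_\rho}\le c_0^{-1}$ from \ref{Aspt-a-2} to make the oscillation small (possibly enlarging $\lambda_0$), which yields $\|R_\lambda f\|_{\mathbf B}\le C\|f\|_A/\lambda$. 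Second, the Morrey-type inequality for generalized Orlicz--Besov spaces --- valid precisely under the growth condition $A(t)\gtrsim[\psi^{-1}(t^{1+\eps})]^d$ --- gives $\mathbf B\hookrightarrow L^\infty$, so $\|R_\lambda f\|_\infty\le C\|R_\lambda f\|_{\mathbf B}\le C\|f\|_A/\lambda$. For $0<\lambda<2\lambda_0$, the resolvent identity $R_\lambda=R_{2\lambda_0}+(2\lambda_0-\lambda)R_\lambda R_{2\lambda_0}$ and the Feller contraction $\|\lambda R_\lambda\|_{L(C_b)}\le 1$ give $\|R_\lambda f\|_\infty\le(2\lambda_0/\lambda)\|R_{2\lambda_0}f\|_\infty\le C\|f\|_A/\lambda$, finishing the proof.

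I expect the main obstacle to be the extended Dynkin formula in (a): closing the martingale property from $C_c^\infty$ onto the solutions $u=R_\lambda f\in C_\rho^{1+\alpha}$ is delicate because $\cL$ is non-local (so naive cutoffs perturb $\cL u$) and because the intrinsic modulus $\rho^\alpha$ is weaker than any power modulus --- one must use the sharp decay in \eqref{aspt:j-upper} even to make $\cL u$ integrable and bounded. A secondary technical point is the perturbation/localization step that upgrades Lemma \ref{Le-Main1} from small-oscillation coefficients to the intrinsically Hölder coefficients of \ref{Aspt-a-1}--\ref{Aspt-a-2} needed in (b).
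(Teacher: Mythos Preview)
Your approach to part (a) is broadly correct and close to the paper's, though the paper takes a different route for existence: rather than building a Feller semigroup via Hille--Yosida--Ray, it mollifies $a$ to get Lipschitz coefficients $a^n$, solves the corresponding SDE driven by a Poisson random measure to obtain $X^n$, proves tightness of $\{\mP^n_x\}$ in $\sP(D)$ via Aldous' criterion, and passes to the limit. Your Feller construction would also need some care, since Theorem~\ref{Thm:Main1} solves \eqref{Eq:PE} in $C^{1+\alpha}_\rho$, not in $C^{1+\alpha}_\rho\cap C_0$, so the range condition in $C_0$ is not immediate. The uniqueness argument (extended Dynkin formula, resolvent identification, Ethier--Kurtz) is essentially the same in both.

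The real gap is in part (b). Your plan is to upgrade Lemma~\ref{Le-Main1} from small oscillation to general coefficients by a localization/Neumann-series argument in the Orlicz--Besov space $B^{\psi,1}_A$, and only then apply the Morrey embedding. But the paper explicitly warns (in the remark following Lemma~\ref{Le-Main1}) that the localization property $\|f\|\asymp\sup_y\|f\,\eta(\cdot-y)\|$, which makes the frozen-coefficient method work in $C^s_\rho$, \emph{fails} for $B^s_{p,\infty}$-type spaces; hence one cannot patch local Orlicz--Besov estimates into a global one. The paper circumvents this by a probabilistic localization: for each $y$ it builds a modified coefficient $a^y$ that agrees with $a$ on $B_\delta(y)$ but has globally small oscillation (so Lemma~\ref{Le-Main1} applies to $\cL^y$), observes that the martingale solutions for $\cL$ and $\cL^y$ coincide up to the first exit time $\tau_1$ from $B_\delta(y)$, and then uses the strong Markov property to write $\mE_\mu\int_0^\infty e^{-\lambda t}f(X_t)\,\d t=\sum_k\mE_\mu\int_{\tau_k}^{\tau_{k+1}}\cdots$, with each piece controlled by $\|f\|_A$ via Lemma~\ref{Le-Main1} and Theorem~\ref{Thm:Morrey2}, and the series summed through a uniform bound $\sup_y\mE_y e^{-\lambda\tau_1}\le 1/2$ for large $\lambda$. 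The extension to all $\lambda>0$ is then a standard resolvent-type comparison, similar in spirit to your final step. In short: the Morrey embedding and the small-oscillation lemma are the right ingredients, but the bridge between them is probabilistic (stopping times), not analytic (freezing in $B^{\psi}_A$).
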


\begin{corollary}\label{Cor:SDE}
    Let \(Z\) be the variance gamma process. Let $c_0\in (0,1)$, $\alpha>0$ and $\rho(r)=1/\log(1+r^{-1})$. Suppose \(c_0 |\xi| \leq |\sigma \xi|\leq c_0^{-1}|\xi|\), for all \(\xi\in\mR^d\), and 
    \[
    |\sigma(x)-\sigma(x')|\leq c_0^{-1} \rho^\alpha(|x-x'|). 
    \]
    Then SDE \eqref{Eq:SDE} has a unique weak solution $X$. Moreover, for any $\lambda>0$ and $\beta>1$, $X$ satisfies 
    \begin{equation}\label{Eq:Krylov2}
        \begin{aligned}
            &\bE \int_0^\infty \e^{-\lambda t} f(X_t)\,\d t\leq \frac{C}{\lambda} \inf\l\{ \lambda>0: \int_{\mR^d} \l( \exp \l|f(x)/\lambda\r|^\beta -1 \r) \d x\leq 1\r\}, \quad f\in L_A. 
	\end{aligned}  
    \end{equation}
    Here the constant $C$ does not depend on $f$. 
\end{corollary}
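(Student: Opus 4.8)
The plan is to deduce Corollary~\ref{Cor:SDE} from Theorem~\ref{Thm:Main2} applied to the operator $\cL_\sigma$ of Example~\ref{ex-gamma}. For $u\in C_b^2$, Itô's formula for the finite-variation variance gamma process shows that the generator of the weak solution of \eqref{Eq:SDE} is precisely $\cL_\sigma$, which is of the form \eqref{Eq:Oprt} with $J$ the L\'evy kernel of the variance gamma process and $a(x,z)=|z|^d\ell(|z|)\big/\big(|\det\sigma(x)|\,|\sigma^{-1}(x)z|^{d}\,\ell(|\sigma^{-1}(x)z|)\big)$. Since \ref{Aspt:J-3} and \ref{Aspt-a-1} constrain $J$ and $a$ only near the origin, and the large-jump part of $\cL_\sigma$ is a bounded, finite-activity operator whose addition affects neither well-posedness of the martingale problem nor the Krylov bound (it is handled by the usual interlacing of big jumps), it suffices to verify \ref{Aspt:J-1}--\ref{Aspt:J-3} and \ref{Aspt-a-1}--\ref{Aspt-a-2} for $\cL_\sigma$ and then invoke Theorem~\ref{Thm:Main2}.

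First I would check the kernel hypotheses. Here $\phi(\lambda)=\log(1+\lambda)$, hence $\psi(\xi)=\phi(|\xi|^2)=\log(1+|\xi|^2)$: $\phi$ is slowly varying at infinity with $\phi\to\infty$, which gives \ref{Aspt:J-1}; $\phi$ is a complete Bernstein function, so $1/(\lambda+\phi)$ is a Stieltjes function whose inverse Laplace transform is non-negative and non-increasing, which gives \ref{Aspt:J-2}; and from $\psi'(r)=2r/(1+r^2)$ one gets $|z|^{-d-1}\psi'(|z|^{-1})\asymp|z|^{-d}$ for $|z|<1$, matching $j(r)=r^{-d}\ell(r)^{-1}$ with $\ell\asymp1$ near $0$, which gives \ref{Aspt:J-3}. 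Turning to the coefficient hypotheses, the bounds $c_0\mI_{d\times d}\le\sigma\le c_0^{-1}\mI_{d\times d}$ yield $|\sigma^{-1}(x)z|\asymp|z|$, $|\det\sigma(x)|\asymp1$ and $\ell(|\sigma^{-1}(x)z|)\asymp\ell(|z|)\asymp1$ near the origin, so $a(x,z)\asymp1$, giving \ref{Aspt-a-1} and the boundedness in \ref{Aspt-a-2}. For the $\rho$-H\"older part of \ref{Aspt-a-2}, $\sigma^{-1}$ and $|\det\sigma|$ inherit the modulus $\rho^\alpha$ from $\sigma$ (matrix inversion and the determinant are smooth on a fixed compact set of invertible matrices, and $|\det\sigma|$ is bounded below), while $v\mapsto|v|^d\ell(|v|)$ is locally Lipschitz off the origin with gradient $\lesssim|v|^{d-1}$ on the relevant range (by smoothness of the variance gamma L\'evy density away from $0$); composing these moduli gives $|a(x,z)-a(x',z)|\lesssim\rho^\alpha(|x-x'|)$ uniformly in $z$.

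With all hypotheses verified, Theorem~\ref{Thm:Main2}(a) gives, for each $x\in\mR^d$, a unique solution of the martingale problem $(\cL_\sigma,\delta_x)$ and the strong Markov property; since $\sigma$ is invertible, the standard equivalence between this martingale problem and weak solutions of \eqref{Eq:SDE} (one recovers the driving noise via $\Delta Z_t=\sigma^{-1}(X_{t-})\Delta X_t$) yields weak existence and uniqueness for \eqref{Eq:SDE}. For the Krylov estimate, $\psi^{-1}(s)=\sqrt{\e^{s}-1}\asymp\e^{s/2}$ for large $s$, so $[\psi^{-1}(t^{1+\eps})]^d\asymp\e^{dt^{1+\eps}/2}$; given $\beta>1$, for any $\eps\in(0,\beta-1)$ the $N$-function $A_\beta$ with $A_\beta(t)=\e^{t^\beta}-1$ satisfies $A_\beta(t)\gtrsim[\psi^{-1}(t^{1+\eps})]^d$ for large $t$, and is thus admissible in Theorem~\ref{Thm:Main2}(b). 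Since the Luxemburg norm of $f$ for $A_\beta$ equals $\inf\{\lambda>0:\int_{\mR^d}(\exp|f(x)/\lambda|^\beta-1)\,\d x\le1\}$, the estimate \eqref{Eq:Krylov1} for $\cL_\sigma$ is exactly \eqref{Eq:Krylov2}; finally $\rho(r)=1/\log(1+r^{-1})$ is comparable as $r\to0$ to $\rho_\psi(r)=1/\log(1+r^{-2})$ from \eqref{Eq:rho}, so the two generalized H\"older scales coincide.

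The main obstacle I expect is the verification of \ref{Aspt-a-2}: proving the generalized H\"older bound for $a(\cdot,z)$ in the intrinsic metric $\rho$ \emph{uniformly in} $z\in\mR^d$ requires quantitative regularity of the variance gamma L\'evy density near the origin (equivalently of the slowly varying factor $\ell$) together with careful tracking of how the modulus $\rho^\alpha$ of $\sigma$ propagates through matrix inversion, the determinant, and the composition $z\mapsto|\sigma^{-1}(x)z|^{d}\ell(|\sigma^{-1}(x)z|)$. The remaining ingredients---the Stieltjes-function fact behind \ref{Aspt:J-2}, the interlacing of large jumps, and the martingale-problem/weak-solution equivalence---are standard.
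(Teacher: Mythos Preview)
Your proposal is correct and follows essentially the same strategy as the paper: express $\cL_\sigma$ in the form \eqref{Eq:Oprt} with the variance gamma kernel $J$, verify \ref{Aspt:J-1}--\ref{Aspt:J-3} and \ref{Aspt-a-1}--\ref{Aspt-a-2}, and then invoke Theorem~\ref{Thm:Main2}; for the Krylov bound both you and the paper take $A(t)=\e^{t^\beta}-1$ and check $A(t)\gtrsim[\psi^{-1}(t^{1+\eps})]^d$ for $\eps\in(0,\beta-1)$. The paper's own proof is considerably terser---it appeals to Proposition~\ref{prop-j-regular} for \ref{Aspt:J-3} rather than computing $\psi'$ directly, and it essentially asserts $\|a(\cdot,z)\|_{C_\rho^\alpha}\le C$ without the detailed composition argument you outline---so your write-up is in fact more complete on the verification of \ref{Aspt-a-2}, which you rightly identify as the main point requiring care. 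Your remark on interlacing large jumps to handle the tail of $a(x,z)J(z)$ is also a point the paper glosses over (it silently truncates with $\1_{B_1}$); one minor caution is that your justification of \ref{Aspt:J-2} via ``$1/(\lambda+\phi)$ is Stieltjes'' gives complete monotonicity but not immediately a \emph{decreasing} density---for that you want the fact that a complete Bernstein (hence special) subordinator has a non-increasing potential density, which is standard.
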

\begin{remark}
	\begin{enumerate}[(i)]
		\item When we seek to compare the classical results of Stroock-Varadhan for diffusion processes (see \cite{stroock2007multidimensional}) with the aforementioned findings, a natural question emerges: can we relax the assumption \ref{Aspt:a-2} in Theorem \ref{Thm:Main2} to the point where $x\mapsto a(x,z)$ is only uniformly continuous? We believe the answer is yes, but currently, we do not have a solution to this problem. One obstacle is that the singular integrals may not be bounded in general Orlicz spaces (see Remark \ref{rmk:SIO} below for further discussion); 
		\item The inequalities \eqref{Eq:Krylov1} and \eqref{Eq:Krylov2} are commonly referred to as Krylov-type estimates. For nondegenerate It\^o processes, the dominant term on the right-hand side of the inequality is $C\|f\|_{L^d}/\lambda$, which can be derived from the ABP maximal principle (see \cite{krylov1980controlled}). On the other hand, for the Markov process associated with $\cL$, the ``diffusion" rate of the process is exceedingly sluggish, causing the occupation time $\mE_0 \int_{0}^{1} \1_{B_r}(X_t)\,\d t$ to decay more slowly than $r^s$, for any $s>0$, as $r\to 0$. Consequently, we require a stronger integrability condition for $f$ in this case. 
        \item By leveraging the transformations outlined in Example \ref{ex:sde}, and using Theorem \ref{Thm:Main2} we can address the weak well-posedness of SDE \eqref{Eq:SDE} driven by general subordinate Brownian. However, this approach requires verifying that \( \ell(r) \) is H\"older  continuous near \(r=0\), a condition that is generally non-trivial to validate. In some specific cases, asymptotic analysis of \( \ell(r) \) near the origin offers a practical pathway to address this requirement (see more discussion after Proposition \ref{Prop:j-regular}).
	\end{enumerate}
\end{remark}

\subsection{Approach}\label{sec-approach}
    As mentioned before, by comparing \eqref{eq:stable} and \eqref{Eq:Lu}, one can roughly interpret \eqref{Eq:Lu} as dealing with the ``$\alpha=0$" case, which turns out to be more challenging. Two of the reasons for this are:
    \begin{enumerate}[(1)]
	\item Considering the nearly integrable nature of the kernel $|z|^{-d}\1_{B_1}(z)$ (indeed $z\mapsto |z|^{-d}\1_{B_1}(z)$ is in weak $L^1$ space), it can be intuitively predicted that the regularity improvement of $L^{-1}$ will be rather weak. As a result, it seems difficult to deal with such operator using standard elliptic theory; 
	\item Although when $a=1$, operator $L$ corresponds to a nice L\'evy process $Y$, it is essential to highlight that $Y$ does not exhibit scaling invariance, a distinctive characteristic that sets it apart from $\alpha$-stable processes. The lack of scaling invariance introduces a substantial challenge in the study of these Markov operators, as it renders the conventional scaling techniques, highly effective for $\alpha$ stable-like operators, but ineffective in this context.
    \end{enumerate}

    In this work, we mainly use a modified Littlewood-Paley type decomposition and some tools from probability theory to deal with our two issues. In order to demonstrate the robustness of our approach, in Appendix \ref{Appendix-stable}, we apply the classic Littlewood-Paley theory and scaling techniques to reprove and extend the main result of \cite{bass2009regularity} for stable-like operators. As can be seen from our proof, it is convenient to consider Schauder-type estimates in the H\"older-Zygmund space $\sC^s$. Here $\sC^s$ is defined using $\Delta_j$, which is called the non-homgeneous dyadic blocks (cf. \cite{bahouri2011fourier}). When $s>0$ with $s\notin \mN$, it is well known that $\sC^s$ coincides with the usual H\"older space $C^s$.

\smallskip

    Naturally, we attempt to extend the above mentioned idea to the study of $\cL$, but immediately face difficulties. The usual dyadic decomposition is no longer applicable to the situation we are concerned with. For example, for the operator $\cL=-\log(I-\Delta)$, a naive substitute of $\sC^s$ is the function space $\sX^s$ defined by
\begin{equation}\label{Eq:X}
	\sX^s = \Big\{u\in \sS'(\mR^d): \sup_{j\geq -1} (2+j)^s\|\Delta_j f\|_\infty <\infty \Big\}
\end{equation}
(see \cite{chae2012logarithmically}). 
However, as we demonstrate in Appendix \ref{Appendix-exmple}, $\sX^1$ contains unbounded discontinuous functions. Thus, classical decomposition theory is no longer applicable to the problems we are concerned with.

To overcome the aforementioned obstacle, we leverage the concept introduced in \cite{kassmann2017intrinsic} and propose a novel decomposition for distributions, which we refer to as the ``intrinsic dyadic decomposition" or ``$\psi$-decomposition" (where $\psi$ denotes any positive, increasing function, though we ultimately choose it as the L\'evy exponent of $Z$ for our purposes). This new decomposition replaces the role of $\Delta_j$ with the $\psi$-dyadic block $\Pi_j^{\psi}$, which is defined in Section \ref{sec-IDD}. Additionally, the generalized Orlicz-Besov space $B^{\psi,s}_{A}$ (as presented in Section \ref{sec-orlicz}) supplants the position of the Besov space $B^{s}_{p,\infty}$ in classical theory. One of the key observations in proving Schauder-type estimates is Theorem \ref{Thm:chart}, which serves as an analogy to Theorem \ref{Thm:chart0} and establishes that the generalized H\"older space $C_{\rho}^{s }$ and $B^{\psi,s}_{\infty}$ are consistent.

The second issue at hand pertains to the inadequacy of scaling methods in proving Schauder estimates for the very low-order operators (further details are provided in Appendix \ref{Appendix-stable}). Consequently, we begin by focusing on a specific subset of non-local operators, namely the infinitesimal generators of subordinate Brownian motions with slowly varying symbols. Using the favorable analytical properties of the subordinator $S$ and the Gaussian kernel, we establish the desired a priori estimates in Orlicz-Besov spaces (as stated in Theorem \ref{Thm:Main0}). Subsequently, we gradually expand these estimates to encompass the general case using classical techniques developed for differential operators.

\smallskip

Regarding the martingale problem associated with $\cL$, following \cite{stroock2007multidimensional}, one can see that well-posedness is a consequence of the solvability of \eqref{Eq:PE} in generalized H\"older spaces. However, certain crucial properties of the Markov process corresponding to $\cL$ cannot be inferred from estimates in H\"older-type spaces. For example, such estimates do not inform us whether the occupation time $\int_{0}^T \1_{B}(X_s) \d s$ is nicely upper or lower bounded. To address this, we turn to $L^\infty$-estimates for \eqref{Eq:PE} under the assumption that $f$ belongs to some Orlicz space. The $L^\infty$ bounds for solutions to \eqref{Eq:PE} are derived via a Morrey-type inequality \eqref{Eq:Morrey2} and an a priori estimate for solutions in Orlicz-Besov spaces (as described in Lemma \ref{Le-Main1}). We note that although $L^p$-estimates can also be established (see for instance \cite{kang2025lqlp}), they are inadequate for obtaining the probabilistic results we require since the following embedding result fails for any $p<\infty$: $[I+\log(I -\Delta)]^{-1}L^p \hookrightarrow L^\infty$. 

{\bf Organization}: The paper is organized into several sections. Section \ref{Sec-Pre} provides a review of Orlicz spaces, slowly varying functions, and some important properties of subordinators with slowly varying Laplace exponents. In Section \ref{Sec-OB}, we introduce the $\psi$-decomposition, generalized Orlicz-Besov spaces, and present the proof of Theorem \ref{Thm:chart}. This section also includes crucial Morrey-type inequalities. In Section \ref{Sec-Main}, we present the proof of Theorem \ref{Thm:Main1} and Corollary \ref{Cor:Log}. Section \ref{Sec-MP} investigates the martingale problem associated with $\cL$. Finally, to illustrate the technical difficulties we mentioned above and the need to introduce the new dyadic decomposition, we offer an appendix including a brief discussion on a theorem by Bass \cite{bass2009regularity} and a remark on the space $\sX^s$. 

\smallskip

{\bf Notations}. Reciprocal of $f$ is denoted by $\frac{1}{f}$ and $f^{-1}$ denote the inverse function of $f$. The letter $c$ or $C$ with or without subscripts stands for an unimportant constant, whose value may change in different places. We use $a \asymp b$ to
denote that $a$ and $b$ are comparable up to a constant, and use $a\lesssim b~(a\gtrsim b)$ to denote $a\leq Cb~(a\geq Cb)$ for some constant $C$.

\section{Preliminaries}\label{Sec-Pre}
\subsection{Orlicz space}\label{sec-orlicz}
In this section, we review some basic facts about Orlicz space. Most of the results can be found in \cite[Chapter VIII]{adams2003sobolev}. 

\smallskip

The notion of Orlicz space extends the usual notion of $L^p$ space. The function $t^p$ entering the definition of $L^p$ is replaced by a more general convex function $A(t)$, which is called an $N$-function. 

\begin{definition}\label{Def-Young}
    Let \(\alpha(t)\) be a real valued function defined  on $[0,\infty)$ and having the following properties: 
    \begin{enumerate}[(a)]
	\item $\alpha(0)=0$, $\alpha(t)>0$ if $t>0$, $\lim_{t\to\infty}\alpha(t)=\infty$; 
	\item $\alpha(t)$ is nondecreasing; 
	\item $\alpha(t)$ is right continuous. 
    \end{enumerate}
    Then the real valued function $A$ defined on $ [0,\infty)$ by 
    \[
	A(t)=\int_0^t \alpha(s)\, \d s, \quad t\geq 0
    \]
    is called an $N$-function. 
\end{definition}
Typical examples of $N$-functions include $A(t)=t^p$ for $1<p<\infty$ and $A(t)=\e^{t^\beta}-1$ for $\beta\geq 1$. 

The Legendre transform of a convex function $A$ on $[0,\infty)$ is given by 
\begin{equation}\label{Eq:A*}
	A_{*}(s) :=\sup_{t\geq 0} [st-A(t)]. 
\end{equation}
By this, one can verify that $A_{*}: [0,\infty)\to [0,\infty)$ is also an $N$-function (cf. \cite{adams2003sobolev}). By $A^{-1}$ we denote the inverse of $A$: 
\[
A^{-1}(s)=\inf\l\{t: A(t)> s\r\},\quad s\geq 0. 
\]
The following result can be found in \cite[page 265, equation (4)]{adams2003sobolev}. 
\begin{proposition}\label{Prop-AA*}
	Assume that $A: [0,\infty)\to [0,\infty)$ is an $N$-function, then 
	\begin{equation}\label{Eq:AA*}
		s\leq A_{*}^{-1}(s) \cdot A^{-1}(s)\leq 2 s. 
	\end{equation}
\end{proposition}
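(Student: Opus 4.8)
The plan is to reduce the whole statement to Young's inequality together with one ``self-conjugacy'' trick. First I would record the structural facts about $A$ and $A_{*}$ that are needed: since $a$ is nondecreasing, right continuous, positive on $(0,\infty)$ and $a(\infty)=\infty$, its primitive $A$ is continuous, strictly increasing, convex, with $A(0)=0$ and $A(\infty)=\infty$, hence a bijection of $[0,\infty)$ onto itself; in particular the inverse in the sense of $A^{-1}(s)=\inf\{t:A(t)>s\}$ agrees with the genuine inverse function, so $A(A^{-1}(s))=s$. By the already-quoted fact that $A_{*}$ is again an $N$-function, the same holds for it: $A_{*}$ is continuous, strictly increasing, and $A_{*}(A_{*}^{-1}(s))=s$. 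From now on set $t:=A^{-1}(s)$ and $u:=A_{*}^{-1}(s)$, so $A(t)=A_{*}(u)=s$; the case $s=0$ being trivial, assume $s>0$, so $t,u>0$.

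The upper bound $A^{-1}(s)\,A_{*}^{-1}(s)\le 2s$ is then immediate: straight from the definition \eqref{Eq:A*}, $A_{*}(u)\ge tu-A(t)$ for all $t,u\ge 0$, i.e. $tu\le A(t)+A_{*}(u)$ (Young's inequality); applying this with $t=A^{-1}(s)$, $u=A_{*}^{-1}(s)$ gives $tu\le A(t)+A_{*}(u)=s+s=2s$.

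For the lower bound $s\le A^{-1}(s)\,A_{*}^{-1}(s)$ the key observation I would use is the estimate $A_{*}\!\big(A(t)/t\big)\le A(t)$. To prove it, expand $A_{*}(A(t)/t)=\sup_{\tau\ge 0}\big[\tfrac{A(t)}{t}\tau-A(\tau)\big]$ and split the supremum at $\tau=t$: for $\tau\in[0,t]$ we have $\tfrac{A(t)}{t}\tau-A(\tau)\le \tfrac{A(t)}{t}\tau\le A(t)$ since $A\ge 0$; for $\tau\ge t$, convexity of $A$ with $A(0)=0$ (equivalently, $a$ nondecreasing) makes $\tau\mapsto A(\tau)/\tau$ nondecreasing, so $A(\tau)\ge\tfrac{A(t)}{t}\tau$ and the bracket is $\le 0$. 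Hence $A_{*}(A(t)/t)\le A(t)=s=A_{*}(u)$, and since $A_{*}$ is strictly increasing this forces $A(t)/t\le u$, i.e. $s=A(t)=t\cdot\tfrac{A(t)}{t}\le tu=A^{-1}(s)\,A_{*}^{-1}(s)$.

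I expect the only genuinely non-routine point to be the lower bound, and within it the decision to test the Legendre transform $A_{*}$ against the value $A(t)/t$ rather than the naive candidate $a(t)$: the latter gives only $A_{*}(a(t))=t\,a(t)-A(t)$, which need not be $\le A(t)$, so the estimate would be lossy; with the choice $A(t)/t$, splitting the supremum and using monotonicity of $\tau\mapsto A(\tau)/\tau$ closes it cleanly. Everything else is bookkeeping on $N$-functions that can be cited from \cite{adams2003sobolev}.
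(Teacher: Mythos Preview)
Your proof is correct. The paper does not give its own proof of this proposition but simply cites \cite[page 230]{adams2003sobolev}; your argument is essentially the standard one found there (Young's inequality for the upper bound, and the estimate $A_{*}(A(t)/t)\le A(t)$ via convexity and $A(0)=0$ for the lower bound).
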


Given an $N$-function $A$, for each measurable function $f$ on $\mR^d$, we define 
\[
I_A(f):= \int_{\mR^d} A(|f|(x))\,\d x. 
\]

\begin{definition}[Luxemburg norm]\label{Def-Luxemburg}
    Let $A$ be an $N$-function. For any measurable function $f$, define
    \[
	\|f\|_A:=\inf \left\{\lambda>0: I_A(f/\lambda) \leq 1\right\} \in[0, \infty].
    \]
    The Orlicz space $L_A$ is the collection of all functions with finite Luxemburg norm. \(L_A\) is a Banach space. 
    In general, the space \(L_A\) is not necessarily separable unless \(A\) satisfies the \(\Delta_2\)-condition (see \cite[8.6, page 266]{adams2003sobolev}). However, one can consider a subspace of \(L_A\), denoted by \(E_A\), which is defined as the closure in \(L_A\) of \(L_A\cap C_0\), where \(C_0\) denotes the space of continuous functions vanishing at infinity. 
    
\end{definition} 
Orlicz spaces have good duality and interpolation properties, making them useful in many areas of mathematics, including probability theory, functional analysis, and PDEs. The following two propositions can also be found in \cite[8.11, page 269 and Lemma 8.17, page 272]{adams2003sobolev}. 
\begin{proposition}
    For all $f\in L_A$ and $g\in L_{A^*}$, it holds that \begin{equation}\label{Eq:Holder}
        \int |fg| \leq 2 \|f\|_{A} \|g\|_{A^*}. 
    \end{equation}
\end{proposition}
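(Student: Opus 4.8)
The statement to be proved is the generalized Hölder inequality for Orlicz spaces: for $f\in L_A$ and $g\in L_{A^*}$, one has $\int |fg| \leq 2\|f\|_A\|g\|_{A^*}$. My plan is to reduce to the pointwise Young inequality and then optimize over scalings. First I would recall that the Legendre transform $A_*$ satisfies, by definition \eqref{Eq:A*}, the \emph{Young inequality} $st \leq A(t) + A_*(s)$ for all $s,t\geq 0$; this is immediate since $A_*(s) = \sup_{t}[st - A(t)] \geq st - A(t)$. (Note the paper writes $A^*$ in the statement and $A_*$ in \eqref{Eq:A*}; I would treat these as the same object, the complementary $N$-function.)

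The main step is then a normalization argument. I may assume $\|f\|_A > 0$ and $\|g\|_{A^*} > 0$ (otherwise $f$ or $g$ vanishes a.e. and the inequality is trivial), and also that both are finite (otherwise the right side is infinite). For $a > \|f\|_A$ and $b > \|g\|_{A^*}$, the definition of the Luxemburg norm gives $I_A(f/a) = \int A(|f(x)|/a)\,\d x \leq 1$ and $I_{A^*}(g/b) = \int A_*(|g(x)|/b)\,\d x \leq 1$. Applying the pointwise Young inequality with $t = |f(x)|/a$ and $s = |g(x)|/b$ and integrating yields
\[
\frac{1}{ab}\int |f(x)g(x)|\,\d x \;\leq\; \int A\!\left(\frac{|f(x)|}{a}\right)\d x + \int A_*\!\left(\frac{|g(x)|}{b}\right)\d x \;\leq\; 2.
\]
Hence $\int|fg| \leq 2ab$; letting $a \downarrow \|f\|_A$ and $b \downarrow \|g\|_{A^*}$ gives the claim. (One technical point: one should check that $I_A(f/\|f\|_A) \leq 1$ — this follows from monotone convergence / continuity of $A$ from the left — but even without it, passing to the infimum through values $a > \|f\|_A$ suffices, so I would phrase it that way to avoid the subtlety.)

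I do not expect a genuine obstacle here: the argument is the standard Orlicz-space Hölder inequality and all ingredients (the Young inequality from the Legendre transform, the definition of $I_A$ and of the Luxemburg norm) are already in place in the excerpt. The only points requiring a little care are the degenerate cases ($\|f\|_A \in \{0,\infty\}$) and justifying that one may take the infimum in the Luxemburg norm through a limiting sequence rather than needing $I_A(f/\|f\|_A)\le 1$ exactly; both are handled by the scaling/limiting device described above. Since the paper cites \cite{adams2003sobolev} for this proposition, I would expect the authors simply to reference that source, but the two-line proof above is self-contained given what precedes it.
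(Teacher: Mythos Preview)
Your proof is correct and is the standard argument for the Orlicz-space H\"older inequality. The paper does not supply its own proof but simply cites \cite{adams2003sobolev}, exactly as you anticipated; your self-contained argument fills in what the authors leave as a reference.
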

\begin{proposition}
    \[
    \|f\|_{A} \asymp \sup \left\{\|fg\|_{1}: \int_{\mR^d} A_{*}(g(x))\,\d x \leq 1\right\}.
    \]
\end{proposition}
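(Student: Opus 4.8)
The plan is to establish the sharp two-sided bound $\|f\|_A\le\|f\|_{(A)}\le 2\|f\|_A$, where I write $\|f\|_{(A)}$ for the supremum on the right-hand side (the ``Orlicz norm''); here and below $A_*$ is the Legendre transform \eqref{Eq:A*}, which coincides with the conjugate $N$-function $A^*$ appearing in \eqref{Eq:Holder}, and $I_{A_*}(g):=\int_{\mR^d}A_*(|g|)$. The upper bound is immediate: if $I_{A_*}(g)\le 1$ then $\|g\|_{A_*}\le 1$ by the very definition of the Luxemburg norm, so the generalized H\"older inequality \eqref{Eq:Holder} gives $\|fg\|_1\le 2\|f\|_A\|g\|_{A_*}\le 2\|f\|_A$; taking the supremum over such $g$ yields $\|f\|_{(A)}\le 2\|f\|_A$.

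The substance is the reverse inequality $\|f\|_A\le\|f\|_{(A)}$, which I would prove by exhibiting a near-optimal test function through Young's equality. First I would reduce to $f\in L^\infty$ with bounded support via the truncations $f_n:=f\1_{\{|f|\le n\}\cap B_n}$: one has $|f_n|\uparrow|f|$ a.e., $\|f_n\|_{(A)}\le\|f\|_{(A)}$ trivially, and $\|f_n\|_A\uparrow\|f\|_A$ by monotone convergence applied to $a\mapsto I_A(f_n/a)$ (this argument also covers the degenerate case $\|f\|_A=\infty$). So it suffices to treat a nonzero $h\in L^\infty$ with bounded support. For such $h$ the map $a\mapsto I_A(h/a)$ is finite, continuous and strictly decreasing where positive, hence $I_A(h/\|h\|_A)=1$; after rescaling I may assume $\|h\|_A=1$ and $I_A(h)=1$. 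Now take the test function $g:=a(|h|)$ with $a=A'$ the right-continuous density of Definition \ref{Def-Young}. By Young's equality for $N$-functions, $A_*(a(t))=t\,a(t)-A(t)$ (classical, see \cite{adams2003sobolev}), one gets pointwise $|h|\,g=A(|h|)+A_*(g)$, so $\int|h|\,g=I_A(h)+I_{A_*}(g)=1+I_{A_*}(g)$, with $I_{A_*}(g)<\infty$ since $g$ is bounded with bounded support. Setting $c:=\max\{1,I_{A_*}(g)\}$ and $\tilde g:=g/c$, convexity of $A_*$ gives $I_{A_*}(\tilde g)\le I_{A_*}(g)/c\le 1$, so $\tilde g$ is admissible, while $\int|h|\,\tilde g=c^{-1}\big(1+I_{A_*}(g)\big)\ge 1=\|h\|_A$. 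Hence $\|h\|_{(A)}\ge\|h\|_A$, and passing to the limit in $n$ proves $\|f\|_{(A)}\ge\|f\|_A$.

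I expect that no step is conceptually hard; the main obstacle is purely bookkeeping, namely checking that $I_A(h)=1$ exactly when $\|h\|_A=1$ for bounded, compactly supported $h$ (which is where those two properties of $h$ enter), and handling the monotone passage to the limit $f_n\uparrow f$ for both norms, including the possibility $\|f\|_A=\infty$. Everything else reduces to a single application of Young's equality together with \eqref{Eq:Holder}.
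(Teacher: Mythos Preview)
Your argument is correct and gives the sharp two-sided bound $\|f\|_A\le\|f\|_{(A)}\le 2\|f\|_A$. The paper itself does not prove this proposition at all: it simply states it and refers the reader to \cite{adams2003sobolev}, so there is no ``paper's own proof'' to compare with. Your approach is the standard one found in that reference (Young's equality $s\,a(s)=A(s)+A_*(a(s))$ to produce a near-extremal $g$, combined with \eqref{Eq:Holder} for the easy direction), and the reductions you make---truncating to bounded, compactly supported $h$ so that $I_A(h/\|h\|_A)=1$ holds exactly, then passing to the limit via monotone convergence---are all sound.
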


The following Young's inequality is crucial. 
\begin{proposition}[{\cite[Theorem 2.5]{oneil1965fractional}}]
	Assume that $A, B$ and $C$ are $N$-functions and that $A^{-1}(t)B^{-1}(t)\leq t C^{-1}(t)~(\forall t\geq 0)$, then
	\begin{equation}\label{Eq:Young}
		\|f*g\|_C \leq 2 \|f\|_A \|g\|_B.  
	\end{equation}
\end{proposition}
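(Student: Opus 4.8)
This is the classical convolution inequality of O'Neil for Orlicz spaces, and I would follow the standard rearrangement proof. By homogeneity of the Luxemburg norm it suffices to prove $\|f*g\|_C\le 2$ under the normalization $\|f\|_A=\|g\|_B=1$, i.e. $I_A(f)\le 1$ and $I_B(g)\le 1$; in view of the identity $\int_{\mathbb R^d}C(|h|)\,\d x=\int_0^\infty C(h^{*}(t))\,\d t$ for the decreasing rearrangement $h^{*}$, this amounts to showing $\int_0^\infty C\big(\tfrac12(f*g)^{*}(t)\big)\,\d t\le 1$.

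The analytic engine is O'Neil's pointwise rearrangement inequality for convolutions,
\[
(f*g)^{*}(t)\ \le\ (f*g)^{**}(t)\ \le\ t\,f^{**}(t)\,g^{**}(t)+\int_t^\infty f^{*}(s)g^{*}(s)\,\d s,\qquad t>0,
\]
where $h^{**}(t)=\tfrac1t\int_0^t h^{*}(s)\,\d s$. I would prove this by splitting $f$ (and likewise $g$) at the level $f^{*}(t)$ into a part supported on a set of measure $\le t$ and a bounded part, expanding $f*g$ into the four cross-convolutions, estimating three of them with Young's elementary bounds $\|u*v\|_\infty\le\|u\|_1\|v\|_\infty$ and $\|u*v\|_1\le\|u\|_1\|v\|_1$, and handling the bounded$\times$bounded term by a further dyadic decomposition, whose corner contributions sum to the tail integral $\int_t^\infty f^{*}g^{*}$.

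Next, I would convert the normalization into size information on the rearrangements. From $I_A(f)\le 1$ and the Hardy--Littlewood identity $\int_0^t f^{*}(s)\,\d s=\sup_{|E|=t}\int_E|f|$, applying \eqref{Eq:Holder} with $\|\1_E\|_{A_*}=1/A_*^{-1}(1/t)$ for $|E|=t$, one gets $t\,f^{**}(t)\le 2/A_*^{-1}(1/t)$, and similarly $t\,g^{**}(t)\le 2/B_*^{-1}(1/t)$; monotonicity of $f^{*}$ also gives $f^{*}(s)\le A^{-1}(1/s)$ and $g^{*}(s)\le B^{-1}(1/s)$. Feeding the lower bounds $A_*^{-1}(u)\ge u/A^{-1}(u)$, $B_*^{-1}(u)\ge u/B^{-1}(u)$ from \eqref{Eq:AA*} together with the hypothesis $A^{-1}(u)B^{-1}(u)\le uC^{-1}(u)$ into the rearrangement inequality is where the three $N$-functions get linked; naively this already places $f*g$ in the weak space $\{h: h^{*}(t)\lesssim C^{-1}(1/t)\}$.

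The genuine difficulty --- and where I expect to spend the real effort --- is upgrading this weak bound to the strong statement $\int_0^\infty C\big(\tfrac12(f*g)^{*}(t)\big)\,\d t\le 1$. One cannot simply substitute $(f*g)^{*}(t)\lesssim C^{-1}(1/t)$ and integrate, because $\int_0^\infty C\big(cC^{-1}(1/t)\big)\,\d t=\infty$ for every $c>0$ (equivalently, the Hardy averaging operator $h^{*}\mapsto h^{**}$ is not bounded on a general $L_C$, and the tail integral $\int_t^\infty\tfrac1sC^{-1}(1/s)\,\d s$ may itself diverge). The resolution is to retain the full information $I_A(f),I_B(g)\le 1$ rather than only the decay of $f^{**},g^{**}$, and to exploit that the joint condition $A^{-1}B^{-1}\le tC^{-1}$ leaves exactly enough room: one runs a layer-cake/Hardy-type estimate directly at the level of the rearrangements $f^{*},g^{*}$, and a careful bookkeeping of the numerical constants --- notably absorbing the factors $2$ coming from \eqref{Eq:Holder} and from \eqref{Eq:AA*} against the division by $2$ --- produces the asserted constant. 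This is exactly the computation carried out in \cite[Theorem 2.5]{oneil1965fractional}.
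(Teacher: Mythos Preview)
The paper does not supply its own proof of this proposition; it simply quotes the result from \cite[Theorem~2.5]{oneil1965fractional}. Your sketch correctly outlines O'Neil's original rearrangement argument---the convolution inequality $(f*g)^{**}(t)\le t\,f^{**}(t)g^{**}(t)+\int_t^\infty f^{*}g^{*}$, the control of $f^{**},g^{**}$ via the normalizations $I_A(f),I_B(g)\le1$, and the passage from weak to strong bounds---so your approach is precisely the one the paper defers to.
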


\subsection{Regular variation}\label{Sec-SVF}
\begin{definition}\label{Def-SVF}
	A measurable function $f: (0,\infty)\to (0,\infty)$ is regularly varying at infinity with index $\alpha\in \mR$ (denoted by $f\in \cR_\alpha(\infty)$), if for all $b>0$, 
	\[
	\lim_{x\to\infty}\frac{f(b x)}{f(x)}=b^\alpha. 
	\]
	If $\alpha=0$, then we say that $f$ is slowly varying. Regular (slow) variation at zero can be
	defined similarly, by changing $x\to\infty$ to $x\to 0^+$. 
\end{definition}

The following two Karamata's theorems (cf. \cite[Theorem 1.5.11]{bingham1987regular}) will be used frequently. 
\begin{lemma}[Karamata's representation theorem]\label{Le-Rep-Karamata}
    A function $f$ is slowly varying if and only if there exists $M>0$ such that for all $x \geq M$ the function can be written in the form
    $$
    f(x)=c(x)\exp \left(\int_M^x \frac{\varepsilon(t)}{t} d t\right)
    $$
    where $c(x)$ is a bounded measurable function of a real variable converging to a finite number as $x$ goes to infinity, and $\varepsilon(x)$ is a bounded measurable function of a real variable converging to zero as $x$ goes to infinity.
\end{lemma}
\begin{lemma}[Karamata's integral theorem]\label{Le-Inter-Karamata}
	Let $f$ be a slowly varying function at infinity. Then  
	\begin{enumerate}[(i)]
		\item for any $\sigma\geq -1$ and $M>0$,  \[\lim_{s\to\infty}\frac{s^{\sigma+1} f(s)}{\int_M^s t^\sigma f(t) \,\d t} = \sigma+1; 
		\] 
		\item for any $\sigma<-1$, 
		\[
		\lim_{s\to\infty} \frac{s^{\sigma+1} f(s)}{\int_s^{\infty} t^\sigma f(t) \,\d t}=-(\sigma+1). 
		\]
	\end{enumerate}
\end{lemma}

\subsection{Subordinators and jump kernels}\label{subsec:subordinator}
Recall that $S$ is a subordinator without drift. $\phi$, the Laplace exponent of $S$ is given by \eqref{eq:rept-phi}. Since $(s t)\e^{-s t}\leq 1-\e^{-s t}$ for each $s, t>0$, by \eqref{eq:rept-phi}, we have 
\begin{equation}\label{eq:phi'-phi}
	 s\phi'(s)\leq \phi(s). 
\end{equation}
Moreover,  \eqref{eq:rept-phi} also implies
\begin{equation}\label{eq:phi-Pi}
	 \phi(u^{-1})\geq u^{-1} \int_0^u \e^{-t/u}\Pi((t,\infty)) \d t \geq \e^{-1} \Pi((u,\infty)), \quad u>0. 
\end{equation}
Recall also that $B$ is a $d$-dimensional Brownian motion independent of $S$, $Z_t= \sqrt{2}B_{S_t}$, and that 
\[
\psi(\xi)= -\log \bE \e^{i \xi\cdot Z_1}=\phi(|\xi|^2)
\]
is the L\'evy exponent of the subordinate Brownian motion $Z$.  $J(z)$, the jump kernel of $Z$, is given by \eqref{Eq:j}. 

For any $\lambda\geq 0$, let 
\[
U_\lambda(\d u)= \int_0^\infty \e^{-\lambda t}~\bP(S_t\in \d u)\,\d t
\]
be the $\lambda$-potential measure of $S$. By Fubini's theorem,  
\begin{align*}
	 \int_{0}^{\infty} \e^{-\tau u} U_\lambda (\d u)=&  \int_{0}^{\infty} \e^{-\tau u} \int_0^\infty \e^{-\lambda t}\bP(S_t\in \d u)~\d t\\
	=&  \int_{0}^{\infty} \e^{-\lambda t}~\d t\int_0^\infty \e^{-\tau u}  \bP(S_t\in \d u)\\
	=& \int_0^\infty \e^{-(\lambda+\phi(\tau))t}~\d t
	= \frac{1}{\lambda+\phi(\tau)}. 
\end{align*}

This implies 
\begin{equation}\label{eq:phi-U}
	 \frac{1}{\lambda+\phi(\tau^{-1})}\geq \int_0^{\tau} \e^{-\frac{u}{\tau}}  U_{\lambda}(\d u) \geq \e^{-1} U_{\lambda}((0,\tau)), \quad \tau>0. 
\end{equation}

 The following proposition will be used in the proof for our main results. 
\begin{proposition}\label{Prop:j-regular}
    Suppose $\phi$ satisfies \ref{Aspt:J-1}, then for any \(r\ll 1\), it holds that
    \begin{equation}\label{eq:int-zJ(z)}
        \int_{B_r} |z|J(z)\,\d z \lesssim r\psi(r^{-1}), 
    \end{equation}
    and \begin{equation}\label{eq:int-J(z)}
        \int_{B_r^c} J(z)\,\d z \lesssim \psi(r^{-1}).
    \end{equation}
\end{proposition}
\begin{proof}
    For \eqref{eq:int-zJ(z)}, in virtue of Fubini's theorem,\eqref{Eq:j} and  \eqref{eq:phi-Pi},  we have 
    \begin{equation*}
	\begin{aligned}
            \int_{B_r} |z|J(z)\,\d z\asymp& \int_0^r j(s)s^d\,\d s \lesssim \int_0^r s^{d} \d s \int_0^\infty t^{-\frac{d}{2}} \e^{-\frac{s^2}{4t}} \Pi(\d t)\\
            = &  \int_0^\infty t^{-\frac{d}{2}} \Pi(\d t) \int_0^r s^{d} \e^{-\frac{s^2}{4t}} \d s \asymp \int_0^\infty  t^{\frac{1}{2}} \Pi(\d t) \int_0^{\frac{r^2}{4t}} u^{\frac{d-1}{2}} \e^{-u} \d u\\ 
            \asymp& \int_0^{r^2}  t^{\frac{1}{2}} \Pi(\d t) \int_0^{\frac{r^2}{4t}} u^{\frac{d-1}{2}} \e^{-u} \d u + \int_{r^2}^\infty  t^{\frac{1}{2}} \Pi(\d t) \int_0^{\frac{r^2}{4t}} u^{\frac{d-1}{2}} \e^{-u} \d u \\
            \asymp& \int_0^{r^2}  t^{\frac{1}{2}} \Pi(\d t) + r^{d+1} \int_{r^2}^\infty  t^{-\frac{d}{2}} \Pi(\d t)\\
            \lesssim & \int_0^{\infty} \!\!\! \int_0^\infty \1_{\{ s<t\leq r^2 \}} s^{-\frac{1}{2}}  \Pi(\d t)\, \d s+ r \Pi((r^2, \infty))\\
            \lesssim & \int_0^{r^2} s^{-\frac{1}{2}}  \Pi((s,\infty)) \d s + r \Pi((r^2, \infty)) \\
            \overset{\eqref{eq:phi-Pi}}{\lesssim} & \int_0^{r^2} s^{-\frac{1}{2}}\phi(s^{-1}) \d s + r\phi(r^{-2})\lesssim r\psi(r^{-1}), \quad 0<r\ll 1. 
	\end{aligned}
    \end{equation*}
    Here we used Lemma \ref{Le-Inter-Karamata} (ii) with $\sigma=-\frac{3}{2}$ and $s = r^{-2}$ in the last inequality. 

    For \eqref{eq:int-J(z)}, 
    again by Fubini's theorem, \eqref{eq:phi-Pi} and Lemma \ref{Le-Inter-Karamata} (ii) with $\sigma=-2$ and $s = r^{-2}$, we have 
    \begin{equation*}
	\begin{aligned}
            \int_{B_r^c} J(z)\,\d z \asymp& \int_r^\infty s^{d-1} j(s)  \d s = \int_r^\infty s^{d-1} \d s \int_0^\infty t^{-\frac{d}{2}} \e^{-\frac{s^2}{4t}} \Pi(\d t)\\
            = & \int_0^\infty t^{-\frac{d}{2}} \Pi(\d t) \int_r^\infty s^{d-1} \e^{-\frac{s^2}{4t}} \d s \asymp \int_0^\infty \Pi(\d t) \int_{\frac{r^2}{4t}}^\infty u^{\frac{d}{2}-1} \e^{-u} \d u \\ 
            = & \int_0^\infty u^{\frac{d}{2}-1} \e^{-u}  \d u \int_{\frac{r^2}{4u}}^\infty \Pi(\d t) \overset{\eqref{eq:phi-Pi}}{\lesssim} \int_0^\infty u^{\frac{d}{2}-1} \e^{-u}  \phi(4u/r^2)  ~ \d u \\
            \lesssim & \phi(r^{-2}) \int_0^{\frac{1}{4}}  u^{\frac{d}{2}-1} \e^{-u} \d u + \int_{\frac{1}{4}}^\infty u^{-2} \phi(4u/r^2) \d u\\
            \lesssim & \phi(r^{-2})+r^{-2} \int_{r^{-2}}^\infty  s^{-2} \phi(s)\d s\lesssim \psi(r^{-1}). 
	\end{aligned}
    \end{equation*}
\end{proof}

To verify that Example \ref{ex:sde} and Example \ref{ex:logLaplace} satisfy our assumptions, we need to conduct a more detailed analysis. The specific calculations are as follows.

For the gamma subordinator, recall that \(\Pi(\d t)= \e^{-t}t^{-1} \d t\). By \eqref{Eq:j}, 
\[
        j(r)=(4\pi)^{-\frac{d}{2}} \int_0^\infty t^{-\frac{d}{2}-1} \e^{-\frac{r^2}{4t}-t} \d t=\frac{1}{r^d\ell(r)},  
\]
which yields that 
\[
    1/\ell(r)= (4\pi)^{-\frac{d}{2}}r^d \int_0^\infty t^{-\frac{d}{2}-1} \e^{-\frac{r^2}{4t}-t} \d t
\]
and 
\[
    (1/\ell(r))'=(4\pi)^{-\frac{d}{2}}d~r^{d-1}\int_0^\infty t^{-\frac{d}{2}-1} \e^{-\frac{r^2}{4t}-t} \d t -(4\pi)^{-\frac{d}{2}}\frac{r^{d+1}}{2} \int_0^\infty t^{-\frac{d}{2}-2} \e^{-\frac{r^2}{4t}-t} \d t
\]
For any \(\gamma\in \mR\), set 
\[
    I_\gamma(r) := \int_0^\infty t^{\gamma} \e^{-\frac{r^2}{4t}-t} \d t.  
\]
By integration by parts formula, we get 
\begin{equation*}
    \begin{aligned}
        I_{\gamma}(r) =& \frac{1}{\gamma+1} \int_0^\infty t^{\gamma+1} \e^{-\frac{r^2}{4t}-t} (1-r^2/(4t^2)) \d t \\
        =&\frac{I_{\gamma+1}(r)}{\gamma+1} -\frac{r^2}{4(\gamma+1)} I_{\gamma-1}(r), \quad \gamma\neq -1. 
    \end{aligned}
\end{equation*}
Therefore, 
\[
1/\ell(r)=2^{-d}\pi^{-\frac{d}{2}} r^d I_{-\frac{d}{2}-1}(r)
\]
and 
\begin{align*}
    (1/\ell(r))' = (4\pi)^{-\frac{d}{2}}  d r^{d-1} \l(I_{-\frac{d}{2}-1}(r)-\frac{r^2}{2d} I_{-\frac{d}{2}-2}(r)\r) = -2^{1-d}\pi^{-\frac{d}{2}} r^{d-1} I_{-\frac{d}{2}}(r). 
\end{align*}
Noting that 
\begin{equation*}
    \begin{aligned}
    I_{\gamma}(r)=&\int_0^\infty t^{\gamma} \e^{-\frac{r^2}{4t}-t} \d t = \int_0^{r^2/4} t^{\gamma}  \e^{-\frac{r^2}{4t}-t} \d t + \int_{r^2/4}^\infty t^{\gamma}  \e^{-\frac{r^2}{4t}-t} \d t \\
    \asymp &  
    \begin{cases}
    r^{2\gamma+2}, \quad \gamma<-1\\ 
    -\log (r), \quad  \gamma=-1\\
    1, \quad \gamma>-1, 
    \end{cases} ~ 0<r\ll 1.  
    \end{aligned}
\end{equation*}
and \(I_{\gamma}(r)\lesssim \e^{-r/3},~ r\gg 1 \), we obtain  
\[
    j(r)=1/(r^d\ell(r)) \asymp I_{-\frac{d}{2}-1}(r) \asymp r^{-d}, ~ r\to 0^+  
\]
and 
\[
    j(r)=1/(r^d\ell(r))\lesssim r^{-d} \e^{-r/3}, ~r>0. 
\]
Moreover, we have 
\[
      |\ell'(r)|\lesssim r^{d-1} I_{-\frac{d}{2}}(r)\lesssim 1, ~ r>0, 
\]
which implies that the coefficient \(a\) in \eqref{Eq:L-sde} satisfies \ref{Aspt:a-1} and \ref{Aspt:a-2}, provided that \(\sigma\) is \(\rho^\alpha\)-continuous and \(c_0 |\xi| \leq |\sigma \xi|\leq c_0^{-1}|\xi|\) for all \(\xi\in\mR^d\). 

\section{Intrinsic dyadic decomposition, function spaces and embedding theorems}\label{Sec-OB}

The contents of this section are purely analytical. Generalized Orlicz-Besov spaces, which is defined by a refined Littlewood-Paley decomposition called $\psi$-decomposition, are introduced. Such function space is a natural extension of Besov space $B^s_{p,\infty}$ (see \cite{bahouri2011fourier} for its definition). Crucial Morrey-type inequalities are also proved in this section. 

\subsection{Intrinsic dyadic decomposition}\label{sec-IDD}
Let $\psi: (0,\infty)\to (0,\infty)$ be a strictly increasing function such that $\lim_{R\to \infty}\psi(R)=\infty$. We point out that in this section, $\psi$ needs not be the L\'evy exponent of a  subordinate Brownian motion. Set 
\[\psi^{-1}(r):=\inf \l\{s>0: \psi(s)>r\r\}, ~ ~r>0. 
\]
Let $\chi$ be a rotationally symmetric, nonnegative and smooth function with compact support such that 
\[
\chi(\xi)= \chi(|\xi|)=
\begin{cases} 1  \quad & \hbox{when }  |\xi|\leq 3/4, \\
	0  &\hbox{when }  |\xi|\geq 1.
\end{cases} 
\]
Let $\varrho: = \cF ^{-1}(\chi)$. We define   
\[\bar{\chi}(\xi)= \chi(2\xi)  \ \mbox{ and }\ \widetilde {\chi}(\xi)= \chi(\xi/2); \quad   \bar{\varrho}=\cF ^{-1}(\bar{\chi})\  \mbox{ and } \  \widetilde{\varrho}=\cF ^{-1}(\widetilde{\chi}). 
\] 
Given $j\geq -1$, set 
\[
\chi_j(\xi):= \chi\l(\frac{\xi}{ \psi^{-1}(2^j)}\r),  \quad \varrho_j(x):= \cF ^{-1}(\chi_j)(x)= \l( \psi^{-1}(2^j)\r)^d \varrho \l( \psi^{-1}(2^j) x\r).
\]
Similarly, we define 
\[
\bar{\chi_j} = \bar{\chi}\l(\frac{\xi}{ \psi^{-1}(2^j)}\r)=\chi\l(\frac{2\xi}{ \psi^{-1}(2^j)}\r), 
\quad \bar{\varrho_j}(x)= \l( \tfrac{1}{2}\psi^{-1}(2^j)\r)^d \varrho \l(\tfrac{1}{2}\psi^{-1}(2^j) x\r), 
\]
and
\[
\widetilde{\chi}_j = \widetilde{\chi}\l(\frac{\xi}{ \psi^{-1}(2^j)}\r)=\chi\l(\frac{\xi}{ 2\psi^{-1}(2^j)}\r), 
\quad \widetilde{\varrho_j}(x)= \l( 2 \psi^{-1}(2^j)\r)^d \varrho \l(2\psi^{-1}(2^j) x\r).  
\]
The $\psi$-dyadic block $\Pi^{\psi}_j$ is formulated by 
\begin{equation*}
	\Pi^{\psi}_j f:= \cF ^{-1} ((\chi_{j+1}- \chi_{j})\cF (f))=(\varrho_{j+1}-\varrho_j)*f, \ j\geq 0   
\end{equation*}
and 
\begin{equation*}
	 \Pi^{\psi}_{-1} f:= \cF ^{-1} \l(\chi\l(\frac{\xi}{\psi^{-1}(1)}\r) \cF (f)(\xi)\r)=\varrho_0*f.
\end{equation*}
Similarly, for each $j\geq 0$, we define 
\[
\widetilde{\Pi}^{\psi}_j f:= \cF ^{-1} ((\widetilde{\chi}_{j+1}- \bar{\chi}_{j})\cF (f)). 
\]
Obviously, for each $j\geq 0$, it holds that 
\[
\widetilde{\Pi}^{\psi}_j \Pi^{\psi}_j = \Pi^{\psi}_j \widetilde{\Pi}^{\psi}_j= \Pi^{\psi}_j.  
\]

The following simple result will be used several times later. 
\begin{lemma}
	Let $A$ be an $N$-function or $A=\infty$. It holds that 
	\begin{equation}\label{Eq:Bernstein}
		\|\nabla \Pi_{j}^{\psi} f\|_A \leq C \psi^{-1}(2^{j+1})~\|\Pi_{j}^{\psi} f\|_A,\quad j\geq -1. 
	\end{equation}
\end{lemma}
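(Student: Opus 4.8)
The statement is a Bernstein-type inequality: the frequency support of $\Pi_j^\psi f$ is contained in the annulus (or ball, when $j=-1$) of radius $\asymp \psi^{-1}(2^{j+1})$, so applying $\nabla$ produces a factor comparable to that radius. The plan is to realize $\nabla \Pi_j^\psi f$ as a convolution of $\Pi_j^\psi f$ against a smooth kernel whose $L^1$-norm scales correctly, and then invoke a convolution (Young-type) inequality valid in the Orlicz setting.

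First I would use the product structure $\widetilde\Pi_j^\psi \Pi_j^\psi = \Pi_j^\psi$ (or the analogous identity $\Pi_j^\psi = \cF^{-1}(\widetilde\chi_{j+1}) * \Pi_j^\psi$ for $j=-1$, adjusting the cutoffs). Concretely, writing $\Theta_j := \cF^{-1}(\widetilde\chi_{j+1} - \bar\chi_j)$ for $j\ge 0$, we have $\Pi_j^\psi f = \Theta_j * \Pi_j^\psi f$, hence $\nabla \Pi_j^\psi f = (\nabla \Theta_j) * \Pi_j^\psi f$. Next I would compute the scaling of $\nabla \Theta_j$: since $\widetilde\chi_{j+1} - \bar\chi_j = g(\xi/\psi^{-1}(2^{j+1}))$ for a fixed Schwartz function $g$ (independent of $j$, depending only on $\chi$), its inverse Fourier transform is $\Theta_j(x) = (\psi^{-1}(2^{j+1}))^d \,\check g(\psi^{-1}(2^{j+1}) x)$, so $\nabla \Theta_j(x) = (\psi^{-1}(2^{j+1}))^{d+1} (\nabla\check g)(\psi^{-1}(2^{j+1}) x)$, and therefore $\|\nabla\Theta_j\|_{L^1} = \psi^{-1}(2^{j+1}) \,\|\nabla \check g\|_{L^1}$, a bound uniform in $j$ (the case $j=-1$ is handled identically with a single fixed cutoff $\chi(\xi/\psi^{-1}(1))$, whose multiplier frequency radius is $\le \psi^{-1}(1)\le \psi^{-1}(2^{j+1}) = \psi^{-1}(1)$, so the same estimate with the stated constant $\psi^{-1}(2^{j+1})$ holds). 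Note $\check g = \widetilde\varrho$-type combinations built from $\varrho$, which is Schwartz since $\chi$ is smooth with compact support, so $\|\nabla\check g\|_{L^1}<\infty$.

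The final step is to pass from $L^1$-convolution kernels to the Orlicz norm $\|\cdot\|_A$. For $A = \infty$ this is just $\|\Theta * h\|_\infty \le \|\Theta\|_{L^1}\|h\|_\infty$. For a general $N$-function $A$ I would use the Young-type inequality for Orlicz norms: with $B = L^1$ (i.e. the $N$-function degenerate case, or more honestly use the standard fact $\|\Theta * h\|_A \le \|\Theta\|_{L^1}\|h\|_A$, which follows from Jensen's inequality applied to the convex function $A$ and the probability measure $|\Theta|\,dx/\|\Theta\|_{L^1}$) — this is the cleanest route and avoids needing the full \eqref{Eq:Young}. Thus $\|\nabla\Pi_j^\psi f\|_A = \|(\nabla\Theta_j) * \Pi_j^\psi f\|_A \le \|\nabla\Theta_j\|_{L^1}\|\Pi_j^\psi f\|_A \le C\,\psi^{-1}(2^{j+1})\|\Pi_j^\psi f\|_A$ with $C = \|\nabla\check g\|_{L^1}$ depending only on $d$ and the choice of $\chi$.

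The only mild obstacle is bookkeeping: verifying that the multiplier $\widetilde\chi_{j+1} - \bar\chi_j$ (resp. the $j=-1$ cutoff) really is a fixed profile evaluated at $\xi/\psi^{-1}(2^{j+1})$ — in fact $\widetilde\chi_{j+1}(\xi) = \chi(\xi/(2\psi^{-1}(2^{j+1})))$ and $\bar\chi_j(\xi) = \chi(2\xi/\psi^{-1}(2^j))$, whose scalings involve $\psi^{-1}(2^j)$ and $\psi^{-1}(2^{j+1})$ separately; but since $\Pi_j^\psi f$ already has frequency support where $\chi_{j+1}-\chi_j\ne 0$, i.e. $|\xi|\lesssim \psi^{-1}(2^{j+1})$, one only needs \emph{some} smooth compactly supported multiplier equal to $1$ on that support and supported in $|\xi|\le 2\psi^{-1}(2^{j+1})$ — e.g. $\widetilde\chi_{j+1}$ alone works, since $\widetilde\chi_{j+1}\equiv 1$ on $|\xi|\le \tfrac32\psi^{-1}(2^{j+1})\supseteq \mathrm{supp}(\chi_{j+1}-\chi_j)$. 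So I would simply take $\Theta_j = \widetilde\varrho_{j+1}$ (up to the precise dilation constant), giving $\Pi_j^\psi f = \widetilde\varrho_{j+1} * \Pi_j^\psi f$ and $\|\nabla\widetilde\varrho_{j+1}\|_{L^1} \asymp \psi^{-1}(2^{j+1})$, and the proof closes with no essential difficulty beyond this routine verification.
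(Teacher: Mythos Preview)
Your proposal is correct and, after your self-correction in the final paragraph, lands on exactly the paper's argument: write $\Pi_j^\psi f = \widetilde\varrho_{j+1} * \Pi_j^\psi f$, differentiate, and apply Young's inequality together with $\|\nabla\widetilde\varrho_{j+1}\|_{1}\asymp \psi^{-1}(2^{j+1})$. The paper invokes \eqref{Eq:Young} directly rather than the Jensen-inequality justification you sketch, but the substance is identical.
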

\begin{proof}
	Noting that $\Pi_{j}^{\psi} f = \widetilde{\varrho}_{j+1}* (\Pi_{j}^{\psi} f)$, by Young's inequality, we get 
	\begin{align*}
		\begin{aligned}
			\|\nabla \Pi_{j}^{\psi} f\|_A\leq& \|\nabla \widetilde{\varrho}_{j+1}* (\Pi_{j}^{\psi} f)\|_A \overset{\eqref{Eq:Young}}{\leq} \|\nabla \widetilde{\varrho}_{j+1}\|_1 \|\Pi_{j}^{\psi} f\|_{A}\\
			\leq& C \psi^{-1}(2^{j+1})~\|\Pi_{j}^{\psi} f\|_A. 
		\end{aligned}
	\end{align*}
	
\end{proof}

\subsection{Generalized Orlicz-Besov and Orlicz-Bessel potential spaces}
\begin{definition}
	Let $s\in \mR$ and $A$ be an $N$-function. The $\|\cdot\|_{{\psi, s}; {A}}$-norm of a distribution $f$ is given by 
	\begin{equation*}
		\|f\|_{\psi,s;A}:= \sup_{j\geq -1} 2^{js} \|\Pi^{\psi}_j f\|_A, 
	\end{equation*}
	and the collection of all distributions with finite $\|\cdot\|_{\psi,s;A}$-norm is denoted by $B^{\psi,s}_{A}$. If $A=\infty$, we define the space $B^{\psi,s}_{\infty}$ in the same way, and also denote it by $\sC_{\psi}^s$. For simplicity, we denote $B^{\psi, 1}_{A}~(\sC_{\psi}^1)$ as $B^{\psi}_A~ (\sC_\psi)$. 
\end{definition}
By the definition of $B^{\psi,s}_A$, it is easy to get the following interpolation result: 
\begin{lemma}\label{Le-Inter}
	Suppose $s, s_0, s_1\in \mR$, $\theta\in (0,1)$ and $s=\theta s_0+(1-\theta)s_1$. Then 
	\begin{equation*}
		\|f\|_{B^{\psi, s}_A}\leq \|f\|_{B^{\psi,s_0}_A}^{\theta} \|f\|_{B^{\psi,s_1}_A}^{1-\theta}.  
	\end{equation*}
\end{lemma}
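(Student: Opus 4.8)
The plan is to prove the interpolation inequality $\|f\|_{B^{\psi,s}_A}\leq \|f\|_{B^{\psi,s_0}_A}^{\theta}\|f\|_{B^{\psi,s_1}_A}^{1-\theta}$ directly from the definition of the $\|\cdot\|_{\psi,s;A}$-norm, block by block. The key observation is that for any fixed $j\geq -1$, the quantity $2^{js}\|\Pi^\psi_j f\|_A$ can be split using $s=\theta s_0+(1-\theta)s_1$ as
\begin{equation*}
	2^{js}\|\Pi^\psi_j f\|_A = \left(2^{js_0}\|\Pi^\psi_j f\|_A\right)^{\theta}\left(2^{js_1}\|\Pi^\psi_j f\|_A\right)^{1-\theta},
\end{equation*}
since $2^{js}=2^{j\theta s_0}2^{j(1-\theta)s_1}$ and $\|\Pi^\psi_j f\|_A=\|\Pi^\psi_j f\|_A^{\theta}\|\Pi^\psi_j f\|_A^{1-\theta}$. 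This is just the elementary factorization $a=a^\theta a^{1-\theta}$ applied to $\|\Pi^\psi_j f\|_A$ together with the additivity of the exponent in $2^{js}$.

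First I would write down this identity for each $j$. Then I would bound each of the two factors: $2^{js_0}\|\Pi^\psi_j f\|_A\leq \sup_{k\geq -1}2^{ks_0}\|\Pi^\psi_k f\|_A = \|f\|_{B^{\psi,s_0}_A}$ and similarly $2^{js_1}\|\Pi^\psi_j f\|_A\leq \|f\|_{B^{\psi,s_1}_A}$. Combining, for every $j\geq -1$,
\begin{equation*}
	2^{js}\|\Pi^\psi_j f\|_A \leq \|f\|_{B^{\psi,s_0}_A}^{\theta}\|f\|_{B^{\psi,s_1}_A}^{1-\theta},
\end{equation*}
and taking the supremum over $j$ on the left-hand side yields the claim, since the right-hand side is independent of $j$.

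This argument requires essentially nothing beyond the definition; there is no real obstacle. The only minor point worth noting is the case where one of the norms on the right is infinite or zero, in which case the inequality is trivially true (or vacuous), so one may assume both are finite and positive, and then the monotonicity of $t\mapsto t^\theta$ on $[0,\infty)$ makes every step legitimate. I would present it as a short two- or three-line computation.
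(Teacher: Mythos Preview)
Your proof is correct and is exactly the direct argument from the definition that the paper has in mind; indeed, the paper does not even write out a proof, merely remarking that the lemma is ``easy to get'' from the definition of $B^{\psi,s}_A$. There is nothing to add.
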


We also introduce the definition of Orlicz-Bessel potential spaces. Recall that $E_A$ is defined in Definition \ref{Def-Luxemburg}.
\begin{definition} \label{def:obp}
	Let $A$ be an $N$-function. The space 
	\begin{equation*}
		H^{\psi}_A:= \l\{f\in \sS'(\mR^d): \cF^{-1}[(1+\psi(\cdot)) \cF(f)]\in E_A\r\}
	\end{equation*}
	with the norm 
	\begin{equation*}
		\|f\|_{H^\psi_A}:= \|(1+\psi(\sqrt{-\Delta})) f\|_{A} 
	\end{equation*}
	is called generalized Orlicz-Bessel potential space. 
\end{definition}

We remark that in Definition \ref{def:obp} we use the restricted space $E_A$ instead of $L_A$ for the following reasons. In fact, we will use the space $H^{\psi}_A$ in the proof of Theorem \ref{Thm:Morrey1}, and we first prove Theorem \ref{Thm:Morrey1} for `smooth' functions (which are dense in $E_A$, but may not be dense in $L_A$) and then for general functions by the standard approximation arguments. On the other hand, $E_A$ already contains enough `good' functions in many cases (for instance in the case of the hypothesis of Corollary \ref{cor:morry-bp}). 

\begin{definition}\label{Def-Holder}
	Let $s>0$. Let $\omega: (0,\infty)\to (0,\infty)$ be a strictly increasing function satisfying $\lim_{r \downarrow 0} \omega(r)=0$. Assume $\omega^s(t)/t\to \infty$ as $t\to0$. The $\|\cdot\|_{C_{\omega}^s}$-norm of a measurable function $f$ is given by 
	\[
	\|f\|_{C_{\omega}^s}:= \|f\|_\infty+\sup_{x\neq y} \frac{|f(x)-f(y)|}{\omega^s(|x-y|)}.  
	\]
	The collection of all functions with finite $\|\cdot\|_{C_{\omega}^{s}}$-norm is denoted by $C_{\omega}^s$, which is referred to as the generalized H\"older space. 
\end{definition}
Recall that 
\[
\rho(r)=\rho_\psi(r):= \frac{1}{\psi(r^{-1})}.
\]
The following result gives a characterization of generalized H\"older space $C_{\rho}^{s}$ in terms of $\psi$-decomposition. 
\begin{theorem}\label{Thm:chart}
	Let $s>0$. Let $\rho$ be the function defined by \eqref{Eq:rho}. Assume 
	\begin{equation}\label{aspt-int-psi}
		r\mapsto \frac{r}{\psi^s(r)} \ \mbox{ is an increasing function and }~ \int_{\eps}^R \frac{\d r}{\psi^s(r)}\lesssim_\eps \frac{R}{\psi^s(R)} \mbox{ for all } \eps\ll 1 \ll R. 
	\end{equation}
	Then there is a constant $c\in (0,1)$ independent of $f$ such that 
	\begin{equation}
		c \|f\|_{\sC_{\psi}^s} 
		\leq \|f\|_{C_{\rho}^{s}}\leq c^{-1} \|f\|_{\sC_{\psi}^s}. 
	\end{equation}
\end{theorem}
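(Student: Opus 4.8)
\textbf{Proof proposal for Theorem \ref{Thm:chart}.}

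The plan is to follow the classical argument characterizing H\"older–Zygmund spaces via Littlewood–Paley blocks (as in \cite{bahouri2011fourier}), but adapted to the intrinsic $\psi$-decomposition, with the hypothesis \eqref{aspt-int-psi} playing the role that the geometric scaling $2^j$ plays in the usual theory. Throughout, write $\lambda_j := \psi^{-1}(2^j)$, so that $\rho(\lambda_j^{-1}) = 2^{-j}$ and the $\psi$-dyadic block $\Pi_j^\psi$ is supported in frequencies of size $\sim \lambda_j$.

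\emph{Step 1: from $C_\rho^s$ to $\sC_\psi^s$.} Suppose $f \in C_\rho^s$. For $j \geq 0$, since $\Pi_j^\psi$ has a Fourier multiplier supported away from the origin (in an annulus), its convolution kernel $\varrho_{j+1}-\varrho_j$ has vanishing integral, so $\Pi_j^\psi f(x) = \int (f(x-y)-f(x)) K_j(y)\,\d y$ where $K_j$ is the kernel. Bounding $|f(x-y)-f(x)| \le \|f\|_{C_\rho^s}\, \rho^s(|y|)$ and using the scaling $K_j(y) = \lambda_j^d K_0(\lambda_j y)$ together with the fast decay of $K_0$, one gets $\|\Pi_j^\psi f\|_\infty \lesssim \|f\|_{C_\rho^s}\int \rho^s(|y|/\lambda_j)\,|K_0(y)|\,\d y$. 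The point is then to show $\int \rho^s(|y|/\lambda_j)|K_0(y)|\,\d y \lesssim \rho^s(\lambda_j^{-1}) = 2^{-js}$. Splitting the integral into $|y|\le 1$ and $|y|>1$: on $|y|\le 1$ one uses monotonicity of $r\mapsto r/\psi^s(r)$ (the first half of \eqref{aspt-int-psi}) to compare $\rho^s(|y|/\lambda_j)$ with $\rho^s(\lambda_j^{-1})$; on $|y|>1$ the rapid decay of $K_0$ absorbs the at-most-polynomial growth of $\rho^s$ (which is controlled because $\psi$ is increasing). The term $j=-1$ is handled directly by $\|\Pi_{-1}^\psi f\|_\infty \lesssim \|f\|_\infty$. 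This yields $\|f\|_{\sC_\psi^s} \lesssim \|f\|_{C_\rho^s}$.

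\emph{Step 2: from $\sC_\psi^s$ to $C_\rho^s$.} Suppose $f \in \sC_\psi^s$, i.e. $\|\Pi_j^\psi f\|_\infty \le M 2^{-js}$ with $M = \|f\|_{\sC_\psi^s}$. First, $\|f\|_\infty \le \sum_{j\ge -1}\|\Pi_j^\psi f\|_\infty \le M\sum_j 2^{-js} \lesssim M$ since $s>0$, giving the sup-norm bound. For the H\"older seminorm, fix $x\ne y$ and let $N$ be the largest integer with $\lambda_N |x-y| \le 1$, i.e. roughly $2^N \sim \psi(|x-y|^{-1}) = \rho(|x-y|)^{-1}$. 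Write $f = \sum_{j\le N}\Pi_j^\psi f + \sum_{j>N}\Pi_j^\psi f$. For the high-frequency part, $|\sum_{j>N}(\Pi_j^\psi f(x)-\Pi_j^\psi f(y))| \le 2\sum_{j>N}\|\Pi_j^\psi f\|_\infty \le 2M\sum_{j>N}2^{-js} \lesssim M 2^{-Ns} \asymp M\rho^s(|x-y|)$. For the low-frequency part, apply the mean value theorem and the Bernstein inequality \eqref{Eq:Bernstein}: $|\sum_{j\le N}(\Pi_j^\psi f(x)-\Pi_j^\psi f(y))| \le |x-y|\sum_{j\le N}\|\nabla\Pi_j^\psi f\|_\infty \lesssim |x-y|\sum_{j\le N}\lambda_{j+1}\|\Pi_j^\psi f\|_\infty \lesssim M|x-y|\sum_{j\le N}\lambda_{j+1}2^{-js}$. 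It remains to show $\sum_{j\le N}\lambda_{j+1}2^{-js} \lesssim \lambda_N 2^{-Ns} \asymp \rho^s(|x-y|)/|x-y|$. Converting the sum to an integral via the substitution $r = \lambda_j$ (so $2^j = \psi(r)$, $2^{-js} = \psi^{-s}(r)$), this is exactly the integral estimate $\int^R r/\psi^s(r)\cdot \d(\log\psi(r)) \lesssim R/\psi^s(R)$, which follows from the second half of \eqref{aspt-int-psi} after an integration by parts (or can be arranged to be a direct consequence of it). Combining the two parts gives $|f(x)-f(y)| \lesssim M\rho^s(|x-y|)$.

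\emph{Main obstacle.} The delicate point is Step 1's estimate $\int \rho^s(|y|/\lambda_j)|K_0(y)|\,\d y \lesssim 2^{-js}$ uniformly in $j$, and the companion summation $\sum_{j\le N}\lambda_{j+1}2^{-js}\lesssim \lambda_N 2^{-Ns}$ in Step 2 — both are where the special structure of $\rho$ and the hypothesis \eqref{aspt-int-psi} are genuinely used, replacing the trivial geometric-series manipulations available in the classical dyadic case. One must be careful that $\psi$ is only assumed strictly increasing to infinity (not regularly varying), so growth/decay comparisons must be funneled entirely through \eqref{aspt-int-psi} and the monotonicity of $r\mapsto r/\psi^s(r)$; in particular, verifying that the near-origin and far-field contributions in Step 1 both reduce to quantities controlled by \eqref{aspt-int-psi} (respectively by the rapid decay of the fixed kernel $\varrho$) requires some care with the change of variables. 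The treatment of the $j=-1$ block and the passage between discrete sums and the continuous integrals in \eqref{aspt-int-psi} are routine but should be done explicitly to keep constants independent of $f$.
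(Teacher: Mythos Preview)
Your approach is essentially the same as the paper's: the paper also proves $\sC_\psi^s \hookrightarrow C_\rho^s$ by splitting $f(x)-f(y)=\sum_j(\Pi_j^\psi f(x)-\Pi_j^\psi f(y))$ at the index $N\sim\log_2\psi(|x-y|^{-1})$, using Bernstein for $j\le N$ and the tail sum for $j>N$ (your Step~2), and proves $C_\rho^s \hookrightarrow \sC_\psi^s$ by writing $\Pi_j^\psi f$ as an integral of first differences against the mean-zero kernel (your Step~1). Two small points to tighten: (i) the kernel of $\Pi_j^\psi$ is $\varrho_{j+1}-\varrho_j$ with \emph{two different} scales $\lambda_j,\lambda_{j+1}$ whose ratio is not fixed, so it is not literally $\lambda_j^d K_0(\lambda_j\cdot)$ --- the paper just bounds the two pieces separately, which is harmless; (ii) in Step~1 the region $|y|>1$ (more precisely $|y|>\lambda_j$) should be handled via the trivial bound $|f(x-y)-f(x)|\le 2\|f\|_\infty$, yielding a $1\wedge\rho^s$ in the integrand as in the paper, since the hypotheses say nothing about $\psi$ near $0$ and hence $\rho^s(r)$ need not grow at most polynomially for large $r$.
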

\begin{proof}
	We only prove the case that $s=1$ (the proof for the general case is similar). Assume that $\|f\|_{\sC_{\psi}}<\infty$. For any $|x-y|\ll1$,  
	\begin{align*}
		|f(x)-f(y)| \leq &\sum_{j\geq -1} |\Pi^{\psi}_j f(x)-\Pi^{\psi}_j f(y)| \\
		\lesssim &\sum_{j\geq -1}  \|\Pi^{\psi}_{j} f\|_\infty \wedge |x-y|\|\nabla\Pi^{\psi}_jf\|_\infty. 
	\end{align*}
	Thanks to \eqref{Eq:Bernstein}, for each $j\geq -1$, 
	\[
	\|\nabla\Pi^{\psi}_jf\|_\infty \lesssim \psi^{-1}(2^{j+1}) \|\Pi^{\psi}_j f\|_\infty.  
	\]
	So for each $K\in \mN$, it holds that  
	\begin{equation}\label{eq:f-dif}
		\begin{aligned}
			|f(x)-f(y)| \lesssim &\|f\|_{\sC_{\psi}} \sum_{j\geq -1} 2^{-j}\wedge |x-y|  \psi^{-1}(2^{j+1}) 2^{-j}\\
			\lesssim &\|f\|_{\sC_{\psi}} \sum_{K+1}^\infty 2^{-j}+\|f\|_{\sC_{\psi}} |x-y| \sum_{j=-1}^K  \psi^{-1}(2^{j+1}) 2^{-j} \\
			\lesssim & \|f\|_{\sC_{\psi}} \l(2^{-K}+ |x-y|\int_0^{K+1}  \psi^{-1}(2^t) 2^{-t}\,\d t \r). 
		\end{aligned}
	\end{equation}
	
	For $R\gg 1$, by integration by parts and \eqref{aspt-int-psi}, we have  
	\begin{align*}
		\int_0^R  \psi^{-1}(2^t) 2^{-t} \d t\lesssim& \int_1^{2^R}\psi^{-1}(r) r^{-2} \d r =-\int_{\psi^{-1}(1)}^{\psi^{-1}(2^R)} u\,\d \l(\frac{1}{\psi(u)}\r)\\
		\lesssim&\frac{u}{\psi(u)}\Big|^{\psi^{-1}(1)}_{\psi^{-1}(R)}+\int_{\psi^{-1}(1)}^{\psi^{-1}(2^R)}  \frac{\d u}{\psi(u)}\lesssim \psi^{-1}(2^R) 2^{-R}. 
	\end{align*}
	Choosing $R=K+1=[\log_2 \psi(|x-y|^{-1})]$, one sees that 
	\begin{equation}\label{eq:f-dif1}
		2^{-K}+ |x-y|\int_0^{K+1}  \psi^{-1}(2^t) 2^{-t} \d t \lesssim  2^{-K} \lesssim \frac{1}{\psi(|x-y|^{-1})}. 
	\end{equation}
	Plugging \eqref{eq:f-dif1} into \eqref{eq:f-dif}, we see that for any $|x-y|\ll 1$, 
	\[
	\frac{|f(x)-f(y)|}{\rho(|x-y|)}=|f(x)-f(y)|\psi(|x-y|^{-1}) \lesssim \|f\|_{\sC_{\psi}}.
	\]
	This together with the fact that $\|f\|_\infty\lesssim \|f\|_{\sC_{\psi}}$ implies 
	\[
	\|f\|_{C^1_\rho}=\|f\|_\infty+\sup_{x\neq y} \frac{|f(x)-f(y)|}{\rho(|x-y|)}\lesssim \|f\|_{\sC_{\psi}}.  
	\]
	
	On the other hand, suppose that 
	\[
	M=\sup_{x\neq y} \frac{|f(x)-f(y)|}{\rho (|x-y|) } + \|f\|_\infty <\infty. 
	\]
	By the definition of $\Pi^{\psi}_j$ and the fact that $\int\varrho_j=\chi_j(0)=1\, (j\geq 0)$, we have 
	\begin{equation}\label{eq:pif}
		\begin{aligned}
			&|\Pi^{\psi}_j f(x)|= \l| \int_{\mR^d} (\varrho_{j+1}-\varrho_j)(z) f(x+z)\,\d z \r|\\
			=& \l| \int_{\mR^d} (\varrho_{j+1}-\varrho_j)(z) (f(x+z)-f(x)) \,\d z \r|\\
			\leq& \int_{\mR^d} |\varrho(z)| \l[\l| f\l(x+\frac{z}{ \psi^{-1}(2^j)}\r)-f(x) \r| + \l| f\l(x+\frac{z}{ \psi^{-1}(2^{j+1})}\r)-f(x) \r|\r] \,\d z \\
			\leq& M \int_{\mR^d} |\varrho(z)| \l[ 1\wedge \frac{1}{\psi\l(\frac{\psi^{-1}(2^j)}{|z|}\r)} \r]\,\d z + M  \int_{\mR^d} |\varrho(z)| \l[ 1\wedge \frac{1}{\psi\l(\frac{\psi^{-1}(2^{j+1})}{|z|}\r)} \r]\,\d z\\
			\lesssim & M\int_0^\infty  \frac{\varrho(r) r^{d-1}}{1\vee \psi \l(\frac{ \psi^{-1}(2^j)}{r}\r)}\,\d r + M \int_0^\infty \frac{\varrho(r)  r^{d-1}}{1\vee \psi \l(\frac{ \psi^{-1}(2^{j+1})}{r}\r)} \,\d r \quad (j\geq 0). 
		\end{aligned}
	\end{equation}
	So we need to dominate the integration $\int_0^\infty \frac{\varrho(r)  r^{d-1}}{1\vee \psi \l(\frac{s}{r}\r)} \,\d r$. Using the fact that $|\varrho(t)|\lesssim_k (1\wedge t^{-k})\ (\forall k>0)$, one sees that for each $s>0$, 
	\begin{equation*}
		\begin{aligned}
			\int_0^\infty \frac{\varrho(r)r^{d-1}}{1\vee \psi (s/r)}\,\d r =&~ s^d \int_0^\infty \frac{\varrho(s/t)}{t^{d+1} [1\vee \psi (t)]}\,\d t \\
			\lesssim&~ s^{d-k}\int_0^s \frac{t^k}{t^{d+1} [1\vee \psi (t)]}\,\d t  +s^d \int_s^\infty \frac{1}{t^{d+1} [1\vee \psi (t)]}\,\d t\\
			=& I_1+I_2. 
		\end{aligned}
	\end{equation*}
	Choosing $k=d+2$ and using \eqref{aspt-int-psi}, we get  
	\[
	I_1=s^{-2}\int_0^s \frac{t}{ 1\vee \psi (t)} \,\d t  \lesssim s^{-1}\int_0^s \frac{\d t}{1\vee \psi(t)} \overset{\eqref{aspt-int-psi}}{\lesssim} \frac{1}{ 1\vee \psi (s)}. 
	\]
	Moreover,  
	\[
	I_2=s^d \int_s^\infty \frac{1}{t^{d+1} [1\vee \psi (t)]} \d t\leq \frac{1}{1\vee \psi (s)}s^d\int_s^\infty t^{-d-1} \d t\lesssim \frac{1}{1\vee \psi (s)}. 
	\]
	Thus, for each $s>0 $, 
	\begin{equation}\label{eq:pif1}
		\int_0^\infty \frac{\varrho(r)r^{d-1}}{1\vee \psi (s/r)}\,\d r \lesssim \frac{1}{ 1\vee \psi (s)}. 
	\end{equation}
	Combining \eqref{eq:pif} and \eqref{eq:pif1}, we arrive 
	\[
	\|\Pi^{\psi}_j f\|_\infty\lesssim M2^{-j}, \ \mbox{ for all } j\geq -1. 
	\]
	So we complete our proof. 
\end{proof}

\subsection{Morrey-type inequalities}
In this section, we prove two Morrey-type inequalities, \eqref{Eq:Morrey1} and \eqref{Eq:Morrey2}. The novelty of them is that both regularity and integrability of $u$ are characterized by monotone functions $\psi$ and $A$, respectively, and $\psi$ can be a slowly varying function. 

We need to make some necessary preparations. Let $S$ be a subordinator with Laplace exponent $\phi$.  Recall $Z$ is the subordinate Brownian motion corresponding to $S$. Here and below, the function $\psi$ is given by the L\'evy exponent of $Z$, i.e.,
\[
\psi(\xi)= -\log \bE \e^{i \xi\cdot Z_1}=\phi(|\xi|^2).  
\]

In this section, we assume $S$ satisfies 
\begin{enumerate}
	\item[\mylabel{Aspt-phi'-1}{$\bf (\Phi'_{1})$}]: $\phi\in \cR_{\frac{\alpha}{2}}(\infty)$ with $\alpha\in [0,1)$, i.e. $\psi\in \cR_{\alpha}(\infty)$ with $\alpha\in [0,1)$. 
\end{enumerate}

\begin{lemma}
    Assume that $\phi$ satisfies \ref{Aspt-phi'-1}. Then for any \(R\geq 1\), it holds that
    \begin{equation}\label{eq:phi-int}
        \int_1^R \frac{\d r}{\psi(r)}\lesssim \frac{R}{\psi(R)} 
    \end{equation}
    and 
    \begin{equation}\label{eq:t-leq-R}
        \frac{r}{\psi(r)}\lesssim \frac{R}{\psi(R)}, \quad r\in [1,R]. 
    \end{equation}
\end{lemma}
\begin{proof}
    If $\psi\in \cR_{\alpha}(\infty)$ with $\alpha\in [0,1)$, then there exists a function $\ell\in \cR_0(\infty)$ such that $\psi(r)=r^{\alpha}\ell(r)$. For $R\geq 1$, using Lemma \ref{Le-Inter-Karamata} (i), we have  
    \begin{equation*}
    \int_1^R \frac{\d r}{\psi(r)}\lesssim \int_1^R \frac{\d r}{r^\alpha \ell(r)}\lesssim \frac{R^{1-\alpha}}{\ell(R)} \lesssim \frac{R}{\psi(R)}.
    \end{equation*}

    For \eqref{eq:t-leq-R}. By Lemma \ref{Le-Rep-Karamata}, \(\psi(r)=r^\alpha c(r) \exp(\int_1^{r}\eps(t)/t\d t)\), with \(c(r)\to c\in \mR\) and \(\eps(r)\to 0\) as \(r\to\infty\). Therefore, 
    \[
        \frac{r\psi(R)}{R\psi(r)}\lesssim \exp\l(\int_r^{R}\frac{\eps(t)- (1-\alpha)}{t}\d t \r)\lesssim 1, 
    \]
    which implies our desired assertion. 
\end{proof}

\subsubsection{Morrey-type inequality for  Orlicz-Bessel potential spaces}

The following Morrey-type inequality for Orlicz-Sobolev space $W^1_A$ can be found in \cite{adams2003sobolev}.
\begin{proposition}\label{Prop:morrey}
    Let $A$ be an $N$-function. Assume that $\int_1^{\infty}  \frac{A^{-1}(s^d) }{s^2}~ \d t<\infty$, then 
    \[
	\|f\|_{C_{\rho}}\lesssim \|f\|_{A}+\|\nabla f\|_{A}, 
    \]
    where  $W^1_A$ is the Orlicz-Sobolev space and 
    \[
	\rho(r)= \l(\int_{\frac{1}{r}}^\infty \frac{A^{-1}(s^d)}{s^2}\,\d s\r)^{-1}. 
    \]
\end{proposition}
\begin{remark}
	As far as the authors are aware, there is no result guaranteeing that the Riesz transform is bounded in the Orlicz space $L_A$, so in general $H^1_A$ does not coincide with $W^1_A$. Therefore,  $H^1_{A}\hookrightarrow C_\rho$ cannot be derived from Proposition \ref{Prop:morrey}. 
\end{remark}

The following result is about Morrey inequalities for generalized Orlicz-Bessel potential spaces. 
\begin{theorem}\label{Thm:Morrey1}
    Assume that $\phi$ satisfies \ref{Aspt-phi'-1}. Let $A$ be an $N$-function satisfying 
    \[
        \int_1^{\infty}\frac{\d A^{-1}(t^d)}{\psi(t)} ~ \d t<\infty.
    \]
    Let \begin{equation}\label{eq:Psi}
        \Psi(R) := \l( \int_R^\infty \frac{\d A^{-1}(t^d)}{\psi(t)}\,\d t \r)^{-1}.   
    \end{equation}
    Then it holds that
    \begin{equation}\label{Eq:Morrey1}
        \l\|u\r\|_{\sC_{\Psi}}\lesssim \|u\|_{H^{\psi}_A}.
    \end{equation}
\end{theorem}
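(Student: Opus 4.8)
\textbf{Proof proposal for Theorem \ref{Thm:Morrey1}.}

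The plan is to establish the pointwise modulus-of-continuity bound $|u(x)-u(y)|\lesssim \Psi(|x-y|^{-1})^{-1}\|u\|_{H^\psi_A}$ together with $\|u\|_\infty\lesssim\|u\|_{H^\psi_A}$, which together give $\|u\|_{\sC_\Psi}\lesssim\|u\|_{H^\psi_A}$ by Definition \ref{Def-Holder}. First I would write $u$ via the Bessel-type potential: $u = G_\psi * g$ where $g:=(1+\psi(\sqrt{-\Delta}))u$ satisfies $\|g\|_A=\|u\|_{H^\psi_A}$, and $G_\psi=\cF^{-1}[(1+\psi(|\cdot|))^{-1}]$. Because $\psi(\xi)=\phi(|\xi|^2)$ and assumption \ref{Aspt-phi'-2} guarantees that $\mu_1=\sT^{-1}(1/(1+\phi))$ is a nonnegative decreasing density, $G_\psi$ is a nonnegative, radially decreasing, integrable kernel: indeed $G_\psi(x)=\int_0^\infty (4\pi t)^{-d/2}\e^{-|x|^2/(4t)}\mu_1(t)\,\d t$ by subordination, so its size is controlled by $\mu_1$. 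Similarly $\nabla G_\psi$ is controlled.

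The key step is to estimate $\|G_\psi\|_B$ and $\|\nabla G_\psi\|_{B'}$ in suitable Orlicz norms so that an application of the generalized Young inequality \eqref{Eq:Young} and the Orlicz–Hölder inequality \eqref{Eq:Holder} (or the dual characterization of $\|\cdot\|_A$) yields the Morrey bound. Concretely, I would split, for fixed $x,y$ with $r:=|x-y|$ small, $u(x)-u(y) = \int (G_\psi(x-w)-G_\psi(y-w))g(w)\,\d w$, and bound it by $\|g\|_A$ times $\|G_\psi(x-\cdot)-G_\psi(y-\cdot)\|_{A_*}$. For the latter I use the near/far decomposition: on $\{|x-w|\le 2r\}\cup\{|y-w|\le 2r\}$ estimate each term of the difference crudely by $G_\psi$ itself; on the far region use the mean value estimate $|G_\psi(x-w)-G_\psi(y-w)|\le r\sup|\nabla G_\psi|$ along the segment. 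Then everything reduces to computing two integrals against the Legendre dual $A_*$ — or more cleanly, estimating the Luxemburg $A_*$-norm of $\1_{B_{2r}}G_\psi$ and of $r\,|\nabla G_\psi|\1_{B_{2r}^c}$ — and these are governed by the tail integral $\int_{r^{-1}}^\infty A^{-1}(t^d)\psi'(t)\psi(t)^{-2}\,\d t = \Psi(r^{-1})^{-1}$ via the change of variables $t\leftrightarrow$ (dual radius) and Proposition \ref{Prop-AA*} relating $A^{-1}$ and $A_*^{-1}$. Here the growth/regularity inputs \ref{Aspt-phi'-1} (so that $\psi(\xi)=\phi(|\xi|^2)\in\cR_\alpha(\infty)$, $\alpha\in[0,1)$) and \eqref{eq:phi-int} ensure the relevant integrals are comparable to their "boundary" values à la Karamata, exactly as in the proof of Theorem \ref{Thm:chart} (equations \eqref{eq:f-dif1}, \eqref{eq:pif1}), and the estimate \eqref{eq:g1} converts the decreasing density $\mu_1$ into the explicit quantity $\frac{\d}{\d u}(1+\phi(u^{-1}))^{-1}$, whose Laplace/profile behaviour feeds the radial bound for $G_\psi$ and $\nabla G_\psi$.

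The $L^\infty$ bound $\|u\|_\infty\lesssim\|u\|_{H^\psi_A}$ follows from the same machinery, taking $r\to\infty$ (or just estimating $\|G_\psi\|_{A_*}\lesssim\Psi(0)^{-1}<\infty$, which is finite under the standing integral hypothesis on $A$). I expect the main obstacle to be the Orlicz-space bookkeeping: unlike the $L^p$ setting one cannot simply invoke Sobolev embedding, so one must carefully track the Luxemburg norms of the (radially decreasing) kernel pieces and invoke \eqref{Eq:Young}/\eqref{Eq:Holder} and Proposition \ref{Prop-AA*} with the right choice of auxiliary $N$-functions; in particular one must verify that the convolution structure $u=G_\psi*g$ combined with the kernel decay produces precisely the profile $\Psi$ defined in \eqref{eq:Psi}, rather than a comparable-but-different one, and that all constants are independent of $x,y$ and $u$. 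Once the kernel estimates are set up, the regular-variation lemmas (Karamata, Lemma \ref{Le-Karamata}, and \eqref{eq:phi-int}) make the integral asymptotics routine.
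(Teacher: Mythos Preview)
Your approach is sound in spirit but takes a different route from the paper. You estimate $|u(x)-u(y)|$ directly via the resolvent kernel $G_\psi$ and a near/far split --- the classical Morrey template. The paper instead targets the dyadic norm: it shows $\|\Pi_j^\Psi u\|_\infty\lesssim 2^{-j}\|f\|_A$ by writing $\Pi_j^\Psi u=\int_0^\infty \e^{-t}\,p_t*(\zeta_{j+1}-\zeta_j)*f\,\d t$, bounding this by $\|f\|_A\int_0^\infty \e^{-t}\|p_t*(\zeta_{j+1}-\zeta_j)\|_{A_*}\,\d t$ via \eqref{Eq:Holder}, then subordinating $p_t=\int h_u\,s_t(\d u)$, applying the mean value theorem to the Gaussian, and computing the key Luxemburg-norm estimate $\|h_u\|_{A_*}\lesssim u^{-d/2}/A_*^{-1}(cu^{-d/2})\asymp A^{-1}(cu^{-d/2})$ explicitly. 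Both routes feed the same ingredients --- subordination, the bound \eqref{eq:g1} for $\mu_1$, Proposition~\ref{Prop-AA*}, and Karamata/\eqref{eq:phi-int} --- into the tail integral $\int_R^\infty A^{-1}(s^d)\psi'(s)\psi(s)^{-2}\,\d s=1/\Psi(R)$. The paper's smooth-cutoff/semigroup organisation has the advantage that one never confronts $\|G_\psi\1_{B_r}\|_{A_*}$ with the origin singularity of $G_\psi$ head-on; in your scheme that Luxemburg norm must be estimated directly (it is doable via Minkowski and the same Gaussian bound), and this is the step you leave most implicit.

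One genuine gap to flag: in this paper $\sC_\Psi=B^{\Psi,1}_\infty$ is the \emph{dyadic} space, not the H\"older space of Definition~\ref{Def-Holder}. Your argument delivers the modulus-of-continuity bound, i.e.\ $H^\psi_A\hookrightarrow C_{\rho_\Psi}$. To upgrade this to $\sC_\Psi$ you would need Theorem~\ref{Thm:chart} with $\Psi$ in place of $\psi$, which in turn requires verifying hypothesis~\eqref{aspt-int-psi} for $\Psi$; that is not automatic from the standing assumptions on $\psi$ and $A$, and the paper sidesteps the issue entirely by proving the dyadic estimate directly. (Your $L^\infty$ remark ``$\|G_\psi\|_{A_*}\lesssim\Psi(0)^{-1}$'' is also imprecise --- the defining integral for $1/\Psi$ is only assumed finite on $[1,\infty)$ --- though $\|G_\psi\|_{A_*}<\infty$ does hold by a separate, easy argument for the contribution near $t=0$.)
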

\begin{remark}
	From a perspective of theoretical completeness, exploring Sobolev-type inequalities for generalized Orlicz-Bessel potential spaces can provide a more comprehensive understanding of these function spaces. However, while this inequality is of interest, it is not directly utilized in the proof of our main results. Therefore, we have chosen to defer the investigation of this topic to a future paper.
\end{remark}
Our result implies the classical Morrey's inequality for Bessel potential space: 
\begin{corollary}\label{cor:morry-bp}
	\begin{enumerate}[(a)]
		\item Let $\alpha\in (0,1)$. Assume $\psi(R)=\phi(R^2)=R^\alpha$ and $A(t)=t^p$ with $p>d/\alpha$. Then  
		\[
		\|u\|_{C^{\alpha-\frac{d}{p}}}\lesssim \|u\|_{H^{\psi}_{A}}=\|u\|_{H^{\alpha}_{p}}. 
		\]
		\item 
		Let  $\psi(R)=\log(1+R^2)$ and $A(t)= \e^{t^{\beta}}-1$ with $\beta>1$. It holds that  
		\[
		\begin{aligned}
			&\|u\|_\infty + \sup_{|x-y|<\frac{1}{2}} |u(x)-u(y)| \cdot (-\log |x-y|)^{1-\frac{1}{\beta}}\\
			\lesssim& \|u\|_{H^{\psi}_A}= \|u+\log(I-\Delta) u\|_{A}.
		\end{aligned}
		\]
	\end{enumerate}
\end{corollary}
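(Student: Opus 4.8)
The plan is to deduce both statements directly from Theorem~\ref{Thm:Morrey1}: in each case I identify the Laplace exponent $\phi$ and the $N$-function $A$, verify the hypotheses \ref{Aspt-phi'-1} and \ref{Aspt-phi'-2} together with the integrability condition $\int_1^\infty A^{-1}(t^d)\psi'(t)/\psi^2(t)\,\d t<\infty$, compute the function $\Psi$ from \eqref{eq:Psi} explicitly, and finally recognise $\sC_\Psi$ as a classical (logarithmic) H\"older space by applying Theorem~\ref{Thm:chart} with $s=1$ and $\psi$ replaced by $\Psi$. Everything then reduces to elementary asymptotics of $\Psi$ and to rewriting the $H^\psi_A$-norm.

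For part (a) I take $\phi(\lambda)=\lambda^{\alpha/2}$, so $\psi(R)=\phi(R^2)=R^\alpha$. Then $\phi\in\cR_{\alpha/2}(\infty)$ with $\alpha/2\in[0,1)$, which is \ref{Aspt-phi'-1}; for \ref{Aspt-phi'-2} I use that $\lambda\mapsto 1/(1+\lambda^{\alpha/2})$ is the Laplace transform of the $1$-potential density of the $(\alpha/2)$-stable subordinator, which is non-negative and non-increasing (equivalently: $\lambda^{\alpha/2}$ is a complete Bernstein function, hence so is $\lambda/(1+\lambda^{\alpha/2})$, and $1/(1+\lambda^{\alpha/2})=\lambda^{-1}\cdot\lambda/(1+\lambda^{\alpha/2})$ is therefore the Laplace transform of a non-increasing non-negative tail function). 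With $A(t)=t^p$ one has $A^{-1}(t^d)=t^{d/p}$, so $A^{-1}(t^d)\psi'(t)/\psi^2(t)=\alpha\, t^{d/p-\alpha-1}$; this is integrable on $[1,\infty)$ precisely when $d/p<\alpha$, i.e.\ $p>d/\alpha$, and then
\[
\Psi(R)=\Big(\int_R^\infty \alpha\, t^{d/p-\alpha-1}\,\d t\Big)^{-1}=\frac{\alpha-d/p}{\alpha}\,R^{\alpha-d/p}.
\]
Since $\alpha-d/p\in(0,1)$, the power function $\Psi$ trivially satisfies \eqref{aspt-int-psi}, and Theorem~\ref{Thm:chart} together with Definition~\ref{Def-Holder} identifies $\sC_\Psi$ with $C^{\alpha-d/p}$ (up to equivalent norms), because $\rho_\Psi(r)=1/\Psi(1/r)\asymp r^{\alpha-d/p}$. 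Finally $\|u\|_{H^\psi_A}=\|(1+(-\Delta)^{\alpha/2})u\|_{L^p}$ is comparable to $\|u\|_{H^\alpha_p}$ since $\xi\mapsto(1+|\xi|^2)^{\alpha/2}/(1+|\xi|^\alpha)$ and its reciprocal are Mikhlin multipliers.

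For part (b) I take $\phi(\lambda)=\log(1+\lambda)$ (the gamma subordinator of Example~\ref{ex-gamma}), so $\psi(R)=\log(1+R^2)$. Now $\phi$ is slowly varying, i.e.\ $\phi\in\cR_0(\infty)$, which is \ref{Aspt-phi'-1} with $\alpha=0$; $\log(1+\lambda)$ is again a complete Bernstein function, so the argument above gives \ref{Aspt-phi'-2} (alternatively one cites the known potential estimates for the gamma subordinator, cf.\ Example~\ref{ex-gamma} and \cite{mimica2014harmonic}). With $A(t)=\e^{t^\beta}-1$ one has $A^{-1}(s)=(\log(1+s))^{1/\beta}$, hence, as $t\to\infty$,
\[
\frac{A^{-1}(t^d)\psi'(t)}{\psi^2(t)}\asymp\frac{(\log t)^{1/\beta}\cdot t^{-1}}{(\log t)^2}=\frac{1}{t\,(\log t)^{2-1/\beta}},
\]
and the substitution $u=\log t$ shows this is integrable near $+\infty$ exactly when $2-1/\beta>1$, i.e.\ $\beta>1$; the same substitution gives $\Psi(R)\asymp(\log R)^{1-1/\beta}$ for large $R$, hence $\rho_\Psi(r)=1/\Psi(1/r)\asymp(-\log r)^{-(1-1/\beta)}$ for small $r$. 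Applying Theorem~\ref{Thm:chart} (whose hypothesis \eqref{aspt-int-psi} for $\Psi$ follows from these asymptotics) and Definition~\ref{Def-Holder}, and noting that on $\{|x-y|\ge 1/2\}$ the corresponding H\"older seminorm is controlled by $\|u\|_\infty$, I obtain
\[
\|u\|_\infty+\sup_{|x-y|<\tfrac12}|u(x)-u(y)|\,(-\log|x-y|)^{1-1/\beta}\lesssim\|u\|_{\sC_\Psi}\lesssim\|u\|_{H^\psi_A};
\]
and $\|u\|_{H^\psi_A}=\|(1+\psi(\sqrt{-\Delta}))u\|_A=\|u+\log(I-\Delta)u\|_A$ directly from the definition of $H^\psi_A$ and the identity $\psi(\sqrt{-\Delta})=\log(I-\Delta)$.

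The only step needing genuine input is the verification of \ref{Aspt-phi'-2}; the cleanest route is the complete-Bernstein-function argument above (showing $1/(1+\phi)$ is the Laplace transform of a non-increasing non-negative function), or a direct appeal to the classical potential theory of the stable and gamma subordinators. Everything else --- the evaluation of the defining integral of $\Psi$, checking \eqref{aspt-int-psi} for the resulting power/logarithm, the Mikhlin-multiplier equivalence in (a), and discarding the large-$|x-y|$ range in (b) --- is routine.
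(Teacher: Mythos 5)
Your proposal is correct and follows essentially the same route as the paper: compute the asymptotics of $\Psi$ from \eqref{eq:Psi} (giving $\rho_\Psi(r)\asymp r^{\alpha-d/p}$ in (a) and $\rho_\Psi(r)\asymp(\log r^{-1})^{\frac{1}{\beta}-1}$ in (b)) and then conclude via Theorem \ref{Thm:Morrey1} combined with Theorem \ref{Thm:chart}. Your additional verifications (the assumptions \ref{Aspt-phi'-1}--\ref{Aspt-phi'-2} via complete Bernstein functions, condition \eqref{aspt-int-psi} for $\Psi$, and the multiplier comparison identifying $\|u\|_{H^{\psi}_A}$ with $\|u\|_{H^{\alpha}_p}$ in (a)) are left implicit in the paper but are consistent with its argument.
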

\begin{proof}
    (a) Let $\phi(r)=r^{\frac{\alpha}{2}}$. Since
    \[
        \rho_{\Psi}(r)=\frac{1}{\Psi(\frac{1}{r})}=\int_{\frac{1}{r}}^{\infty}  \frac{\d A^{-1}(t^d)}{\psi(t)}\asymp \int_{\frac{1}{r}}^\infty t^{\frac{d}{p}-\alpha-1}\,\d s\asymp r^{\alpha-\frac{d}{p}},
    \]
	the conclusion follows by Theorem \ref{Thm:Morrey1} and Theorem \ref{Thm:chart}. 
	
	(b) By definition, 
	\begin{equation*}
		A^{-1}(t)=\log^{\frac{1}{\beta}}(1+t). 
	\end{equation*} 
	For any $r\ll 1$, 
	\begin{align*}
		\rho_{\Psi}(r)=& \frac{1}{\Psi(\frac{1}{r})}=\int_{\frac{1}{r}}^\infty  \frac{\d A^{-1}(t^d)}{\psi(t)} \asymp \int_{\frac{1}{r}}^\infty \frac{t^{d-1}\log^{\frac{1}{\beta}-1}(1+t^d)}{(1+t^d)\log(1+t^2)}~\d t \\
		\asymp & \int_{r^{-d}}^\infty t^{-1} \log^{\frac{1}{\beta}-2} t\,\d t \asymp \l(\log \frac{1}{r}\r)^{\frac{1}{\beta}-1}.
	\end{align*}
	Again by Theorem \ref{Thm:Morrey1} and Theorem \ref{Thm:chart}, we get the desired result. 
\end{proof}

Now we give the proof for Theorem \ref{Thm:Morrey1} below. 
\begin{proof}[Proof of Theorem \ref{Thm:Morrey1}]
    Recall that $S$ is a subordinator with Laplace exponent $\phi$, and that $Z$ is the subordinate Brownian motion corresponding to $S$. For any \(\lambda>0\) and \(f\in C_0(\mR^d)\), set
    \[
        P_tf(x)=\bE f(x+Z_t)~ \mbox{ and } ~ G_\lambda f(x)=\int_0^\infty \e^{-\lambda t} P_tf(x)\,\d t. 
    \]
    Then 
    \begin{equation}\label{eq:resolvent1}
	 (\lambda+\phi(-\Delta)) G_\lambda f=f, \quad f\in C_0(\mR^d)
    \end{equation}
    and 
    \begin{equation}\label{eq:resolvent2}
    G_\lambda(\lambda+\phi(-\Delta))u=u  \mbox{ with } u=G_\lambda f, \quad f\in C_0(\mR^d). 
    \end{equation}
    Since \(f\in C_0 \cap L_A\) is dense in \(E_A\), the operator \(G_\lambda\) then can be extended to a mapping from \(E_A\) to \(H^\psi_A\). Moreover, \eqref{eq:resolvent1} still holds for all \(f\in E_A\), and \(H^\psi_A=G_\lambda E_A\). Thus, 
    \[
	G_\lambda (\lambda+\phi(-\Delta))u=u, \quad u\in H^\psi_A.  
    \]
    Let $f=u+\phi(-\Delta) u$. Then 
    \[
	u= G_1f= \int_0^\infty \e^{-t} P_t f\,\d t. 
    \]
    We need to prove 
    \begin{equation}\label{eq:Pi-u1}
        \|\Pi^\Psi_j u\|_\infty= \l\|\Pi^\Psi_j \int_0^\infty \e^{-t} P_t f\,\d t\r\|_\infty\leq C  2^{-j} \|f\|_{A}.
    \end{equation}
    Denote by $h_t$ and $p_t$ the transition probability density functions of Brownian motion $\sqrt{2}B_t$ and $Z_t$, respectively. Let $s_t(\d u)$ be the distribution of $S_t$.
	Let $\zeta_j(x)= (\Psi^{-1}(2^j))^d\varrho(\Psi^{-1}(2^j)x)$, where $\varrho$ is the smooth function in Section \ref{sec-IDD}. Thanks to H\"older's inequality \eqref{Eq:Holder}, we have 
	\begin{equation}\label{eq:Pi-u2}
		\begin{aligned}
			\|\Pi^\Psi_j u\|_\infty\leq& \l\| \Pi^\Psi_j \int_0^\infty \e^{-t} P_t f\,\d t  \r\|_\infty \leq \l\| \int_0^\infty \e^{-t}   p_t*(\zeta_{j+1}-\zeta_j)* f\,\d t  \r\|_\infty\\
			\lesssim& \|f\|_{A} \int_0^\infty \e^{-t} \| p_t*(\zeta_{j+1}-\zeta_j) \|_{A_{*}}\,\d t.  
		\end{aligned}
	\end{equation}
	Thus, to prove \eqref{eq:Pi-u1}, we need to estimate  $\| p_t*(\zeta_{j+1}-\zeta_j) \|_{A_{*}}$. 
	
	Let $R'>R\gg 1$ and $\varrho^R(x)= R^d\varrho(R x)$. Using the fact that 
	\[
	p_t(x)=\int_0^\infty h_u(x) s_t(\d u),
	\]
	we get 
	\begin{equation}\label{eq:morrey1}
		\begin{aligned}
			&p_t*(\varrho^{R'}  - \varrho^{R} ) (x)\\
			=& \int_{\mR^d}  p_t\l(x+z\r)R'^d\varrho(R' z)\,\d z -\int_{\mR^d}  p_t\l(x+z\r)R^d\varrho(R z)\,\d z \\
			=& \int_{\mR^d} \l[ p_t\l(x+\frac{z}{R'}\r)- p_t\l(x+\frac{z}{R}\r) \r] \varrho(z) ~ \d z \\
			=& \int_0^\infty\!\!\!\int_{\mR^d} \l| h_u\l(x+\frac{z}{R'}\r)-h_u\l(x+\frac{z}{R}\r) \r|   \varrho(z) ~s_t(\d u)\,\d z \\
			=& \int_0^\infty\!\!\!\int_{\mR^d} u^{-\frac{d}{2}} \l| h_1\l(\frac{x}{\sqrt{u}}+\frac{z}{\sqrt{u}R'}\r)-h_1\l(\frac{x}{\sqrt{u}}+\frac{z}{\sqrt{u}R}\r)\r| s_t(\d u)\varrho(z) ~ \d z. 
		\end{aligned}
	\end{equation}
	Here we use the fact that $h_u(x)=u^{-\frac{d}{2}} h_1(x/\sqrt{u})$. 
	By mean value theorem, one sees that 
	\begin{align*}
		&\l| h_1\l(\frac{x}{\sqrt{u}}+\frac{z}{\sqrt{u}R'}\r)-h_1\l(\frac{x}{\sqrt{u}}+\frac{z}{\sqrt{u}R}\r)\r|\\
		\leq & \l( \l| h_1\l(\frac{x}{\sqrt{u}}+\frac{z}{\sqrt{u}R'}\r)\r|+ \l| h_1\l(\frac{x}{\sqrt{u}}+\frac{z}{\sqrt{u}R}\r)\r| \r) \wedge\\ 
		&\l|\frac{z}{\sqrt{u}R'}-\frac{z}{\sqrt{u}R}\r| \int_0^1\l|\nabla h_1\l(\frac{x}{\sqrt{u}}+(1-\theta)\frac{z}{\sqrt{u}R'}+\theta \frac{z}{\sqrt{u}R}\r)\r| \,\d \theta.
	\end{align*}
	This implies  
	\begin{equation}\label{eq:morrey2}
		\begin{aligned}
			&u^{-\frac{d}{2}}\l\| h_1\l(\frac{\cdot}{\sqrt{u}}+\frac{z}{\sqrt{u}R'}\r)-h_1\l(\frac{\cdot}{\sqrt{u}}+\frac{z}{\sqrt{u}R}\r)\r\|_{A_{*}}\\
			\lesssim& \l\|u^{-\frac{d}{2}} h_1\l(\frac{\cdot}{\sqrt{u}}\r)\r\|_{A_{*}}\wedge \l|\frac{z}{\sqrt{u}R'}-\frac{z}{\sqrt{u}R}\r| \l\|u^{-\frac{d}{2}} \nabla h_1\l(\frac{\cdot}{\sqrt{u}}\r)\r\|_{A_{*}}\\
			\lesssim& \l( 1\wedge \frac{z}{\sqrt{u}R}\r)\,  \l(\l\|h_u\r\|_{A_{*}}+\l\|u^{-\frac{d}{2}}\nabla h_1\l(\frac{\cdot}{\sqrt{u}}\r)\r\|_{A_{*}}\r). 
		\end{aligned}
	\end{equation}
	Setting 
	\begin{equation}\label{eq:fA}
		f_{A_{*}}(u)=\| h_u \|_{A_{*}}+\l\|u^{-\frac{d}{2}}\nabla h_1\l(\frac{\cdot}{\sqrt{u}}\r)\r\|_{A_{*}}, 
	\end{equation} 
	and combining \eqref{eq:morrey1} and \eqref{eq:morrey2}, we get  
	\begin{equation}\label{eq:morrey3}
		\begin{aligned}
			&\|p_t*(\varrho^{R'}  - \varrho^{R} )\|_{A_{*}}\\
			\lesssim & \int_0^\infty\!\!\!\int_{\mR^d}  \l( 1\wedge \frac{z}{\sqrt{u}R}\r)\, ~\varrho(z) \, f_{A_{*}}(u) \,\d z\, s_t(\d u)\\
			\lesssim& \int_0^\infty \l(\frac{1}{\sqrt{u}R} \int_0^{\sqrt{u}R} |\varrho(r)| r^{d}\,\d r + \int_{\sqrt{u}R}^{\infty} |\varrho(r)| r^{d-1}\,\d r \r) \, f_{A_{*}}(u)\, s_t(\d u). 
		\end{aligned}
	\end{equation}
	Noting that $|\varrho(r)|\lesssim 1\wedge r^{-k} \, (\forall k>0)$, we have 
	\begin{equation}\label{eq:morrey4}
		\frac{1}{N} \int_0^{N} |\varrho(r)| r^{d}\,\d r \lesssim 1\wedge \frac{1}{N} \,\mbox{ and } \int_{N}^{\infty} |\varrho(r)| r^{d-1}\,\d r  \lesssim 1\wedge \frac{1}{N}, \  \forall N>0. 
	\end{equation}
	Plugging estimate \eqref{eq:morrey4} into  \eqref{eq:morrey3}, we get 
	\begin{equation}\label{eq:morrey5}
		\|P_t (\varrho^{R'}  - \varrho^{R} )\|_{A_{*}}\lesssim  \int_0^\infty\l(1\wedge \frac{1}{\sqrt{u}R}\r) \, f_{A_{*}}(u)\, s_t(\d u).
	\end{equation}
	So the main problem comes down to the estimation of $f_{A^*}(u)$. Let us estimate the first term on the right-hand side of \eqref{eq:fA}. By the scaling property of $h_u$, one sees that 
	\begin{equation}\label{eq:IA}
		\begin{aligned}
			I_{A_{*}}(h_u/\lambda)\leq 1 &\iff \int_{\mR^d} A_{*}\l(\frac{h_1\l({x}/{\sqrt{u}}\r)}{(\sqrt{u})^d\lambda}\r)\, \d x\leq 1\\ &\iff \int_{\mR^d} A_{*}\l(\frac{h_1\l( x \r)}{(\sqrt{u})^d\lambda}\r)\, \d x \leq u^{-\frac{d}{2}}. 
		\end{aligned}
	\end{equation}
	Let  $K=({4\pi u})^{-d/2}$. Basic calculation yields  
	\begin{equation*}
		\begin{aligned}
			&\int_{\mR^d} A_{*}\l(\frac{h_1\l( x \r)}{(\sqrt{u})^d \lambda}\r)\, \d x =  \int_{\mR^d} A_{*}\l(\frac{\e^{-\frac{|x|^2}{4}}}{(\sqrt{4\pi u})^d\lambda}\r)\, \d x \\
			=& c_d \int_0^\infty A_{*}\l(\frac{\e^{-r^2/4}}{(\sqrt{4\pi u})^d\lambda}\r) r^{d-1}\,\d r\\
			=& c_d \int_0^{\frac{K}{\lambda}} A_{*}(r) \l(\log \frac{K}{\lambda}-\log r\r)^{\frac{d}{2}-1} r^{-1}\,\d r\\
			=& c_d \int_0^{\frac{K}{\lambda}} \l(\log \frac{K}{\lambda}-\log r\r)^{\frac{d}{2}-1} r^{-1}\,\d r\int_0^r \d A_{*}(s) \\
			=& c_d \int_0^{\frac{K}{\lambda}} \d A_{*}(s) \int_s^{\frac{K}{\lambda}} \l(\log \frac{K}{\lambda}-\log r\r)^{\frac{d}{2}-1} r^{-1}\,\d r \\
			=& c_d \int_0^{\frac{K}{\lambda}} \l(\log \frac{K}{\lambda}-\log s\r)^{\frac{d}{2}} D_{-}A_{*}(s)~\d s, 
		\end{aligned}
	\end{equation*}
	where $D_{-}f$ is the left derivative of $f$. Noting that $D_{-}A_{*}$ is increasing and $D_{-}A_{*}(x)(y-x)\leq A_{*}(y)-A_{*}(x)$, 
	we get 
	\begin{align*}
		\int_{\mR^d} A_{*}\l(\frac{h_1\l( x \r)}{(\sqrt{u})^d \lambda}\r)\, \d x\leq& c_d\int_0^\infty u^{\frac{d}{2}}\e^{-u}~\d u\cdot  D_{-}A_{*}\l(\frac{K}{\lambda}\r)\frac{K}{\lambda}\\
		\leq& c_d \l[ A_{*}\l(\frac{2K}{\lambda}\r)-A_{*}\l(\frac{K}{\lambda}\r)\r]\leq c_d A_{*}\l(\frac{2K}{\lambda}\r).
	\end{align*}
	Using \eqref{eq:IA}, we have 
	\begin{align*}
		I_{A_{*}}(h_u/\lambda)\leq 1 \Longleftarrow A_{*}\l(\frac{2K}{\lambda}\r) \leq C K. 
	\end{align*}
	This yields 
	\begin{equation*}
		\|h_u\|_{A_{*}}=\inf\l\{\lambda: I_{A_{*}}(h_u/\lambda)\leq 1\r\}\leq \frac{2K}{A_{*}^{-1}(C K)} \lesssim \frac{u^{-d/2}}{A_{*}^{-1}(cu^{-d/2})},\quad c\geq 1. 
	\end{equation*}
	Similarly, we also have 
	\[
	\l\|u^{-\frac{d}{2}}\nabla h_1\l(\frac{\cdot}{\sqrt{u}}\r)\r\|_{A_{*}}\lesssim \frac{u^{-d/2}}{A_{*}^{-1}(c  u^{-d/2})}.
	\]
	Thus, there exists a constant $c>0$ such that 
	\begin{equation*}
		f_{A_{*}}(u)\lesssim \frac{u^{-d/2}}{A_{*}^{-1}(c u^{-d/2})}.
	\end{equation*}
	This together with \eqref{eq:morrey5} and \eqref{Eq:AA*} yields 
    \begin{align*}
        & \int_0^\infty \e^{-t} \|p_t* (\varrho^{R'}  - \varrho^{R} )\|_{A_{*}}\,\d t \\
        \lesssim& \int_0^\infty\e^{-t}\int_0^\infty \frac{1\wedge \frac{1}{\sqrt{u}R}}{u^{d/2}A_{*}^{-1}(cu^{-d/2})}~ s_t(\d u)\,\d t\\
        \lesssim&\int_0^{R^{-2}} A^{-1}(cu^{-d/2}) U_1(\d u)+ \frac{1}{R} \int_{R^{-2}}^\infty \frac{1}{\sqrt{u}} A^{-1}(cu^{-d/2}) U_1(\d u)\\
        =&:I_1+I_2. 
    \end{align*}
    where $U_1$ is the $1$- potential measure of $S_t$. 

    Recall that \(A(t)=\int_0^t \alpha(s) \d s\), we have  \(A^{-1}(t)=\int_0^t \beta(s) \d s \) with \(\beta(s)=\frac{1}{\alpha(A^{-1}(s))}\).  

    For \(I_1\), by Fubini theorem and \eqref{eq:phi-U}, we have 
	\begin{equation*}
		\begin{aligned}
			I_1 \lesssim& \int_0^{R^{-2}} \int_0^{cu^{-\frac{d}{2}}} \beta(s) ~ \d s~ U_0(\d u)\\
			=& \int_0^\infty\!\!\!\int_0^\infty \1_{\{u< ({s}/{c})^{-{2}/{d}}\wedge R^{-2}\}} \beta(s) ~ U_0(\d u) \d s \\
			=& \int_0^\infty \beta(s) U_0((0, ({s}/{c})^{-{2}/{d}}\wedge R^{-2})) \d s\\
			=& A^{-1}(cR^d) U_0((0,R^{-2})) + \int_{cR^d}^\infty U_0((0, ({s}/{c})^{-{2}/{d}})) \d A^{-1}(s)\\
			\overset{\eqref{eq:phi-U}}{\lesssim} & \frac{A^{-1}(cR^d)}{\psi(R)}+ \int_{cR^d}^\infty \frac{\d A^{-1}(s)}{\psi((s/c)^{1/d})}  \lesssim  \frac{A^{-1}(cR^d)}{\psi(R)}+ \int_R^\infty \frac{ \d A^{-1}(ct^d)}{\psi(t)}. 
		\end{aligned}
	\end{equation*}
    
    For \(I_2\), again by Fubini theorem, \eqref{eq:phi-U}, \eqref{eq:phi-int} and \eqref{eq:t-leq-R}, we get 
    \begin{equation*}
        \begin{aligned}
            I_2=& \frac{1}{R} \int_{R^{-2}}^\infty \frac{1}{\sqrt{u}} A^{-1}(cu^{-d/2}) U_1(\d u) \\
        =&\frac{1}{R} \int_0^\infty\!\!\!\int_0^\infty \1_{\{R^{-2}<u<(s/c)^{-2/d}\}} \frac{\beta(s)}{\sqrt{u}} ~  U_1(\d u) \d s\\
        \asymp& \frac{1}{R} \int_{(0,\infty)^3}  \1_{\{R^{-2}<u<(s/c)^{-2/d}\wedge t\}} \beta(s) t^{-\frac{3}{2}} U_1(\d u)  \d s \d t \\
        \lesssim & \frac{1}{R} \int_{(0,\infty)^2} \1_{\{t>R^{-2}\}}\1_{\{s<cR^d\}}  U_1(0, (s/c)^{-2/d}\wedge t) \beta(s) t^{-\frac{3}{2}} \d s \d t  \\
        \overset{\eqref{eq:phi-U}}{\lesssim} &  \frac{1}{R} \int_{(0,\infty)^2}\1_{\{t>R^{-2}\}}\1_{\{s<cR^d\}}  \frac{1}{1+\phi((s/c)^{2/d}\vee t^{-1})} t^{-\frac{3}{2}} \beta(s) ~ \d s \d t \\
        \lesssim & \frac{1}{R} \int_{R^{-2}}^\infty t^{-\frac{3}{2}} \l[\int_0^{ct^{-\frac{d}{2}}} \frac{\beta(s)}{1+\phi(t^{-1})} \d s+\int_{ct^{-\frac{d}{2}}}^{cR^d} \frac{\beta(s)}{1+\phi((s/c)^{2/d})} \d s\r] \d t \\
        \lesssim & \frac{1}{R} \int_0^R \frac{A^{-1}(cu^d)}{1+\psi(u)} \d u+ \frac{1}{R} \int_0^{cR^d} \frac{\beta(s)}{1+\psi((s/c)^{1/d})}  \int_{(\frac{s}{c})^{-\frac{2}{d}}}^\infty t^{-\frac{3}{2}} ~ \d t \d s \\
        \lesssim & A^{-1}(cR^d) \frac{1}{R}\int_0^R \frac{1}{1+\psi(u)} \d u + \frac{1}{R} \int_0^{cR^d} \frac{ s^{\frac{1}{d}} \d A^{-1}(s) }{1+\psi((s/c)^{1/d})} \\
        \overset{\eqref{eq:phi-int},\eqref{eq:t-leq-R}}{\lesssim} &  \frac{A^{-1}(cR^d)}{\psi(R)}. 
        \end{aligned}
    \end{equation*}
    Noting that 
    \[
        \int_R^\infty \frac{ \d A^{-1}(ct^d)}{\psi(t)} \gtrsim \int_R^{2R} \frac{ \d A^{-1}(ct^d)}{\psi(t)} \gtrsim \frac{A^{-1}(cR^d)}{\psi(2R)} \gtrsim \frac{A^{-1}(cR^d)}{\psi(R)}, 
    \]
    Therefore, there is a constant \(R_0>1\) such that for all $R'>R\geq R_0$, it holds that
    \begin{equation}\label{eq:Pt-A*}
	\begin{aligned}
            \int_0^\infty \e^{-t} \|P_t (\varrho^{R'}  - \varrho^{R} )\|_{A_{*}}\,\d t \lesssim& \int_R^\infty \frac{ \d A^{-1}(ct^d)}{\psi(t)}\lesssim \int_R^\infty \frac{ \d A^{-1}(ct^d)}{\psi(t)}\\
            \lesssim& \int_R^\infty \frac{ \d A^{-1}(t^d)}{\psi(t)}\overset{\eqref{eq:Psi}}{=}\frac{1}{\Psi(R)}. 
	\end{aligned}
    \end{equation}
	When \(j\gtrsim \log_2 R_0\), by letting $R'= \Psi^{-1}(2^{j+1})\gg 1$ and $R= \Psi^{-1}(2^j)\gg 1$, and plugging \eqref{eq:Pt-A*} to \eqref{eq:Pi-u2}, we obtain 
	\[
	\|\Pi_j^{\Psi} u\|_\infty \lesssim \frac{\|f\|_A}{\Psi\l(\Psi^{-1}(2^j)\r)}\lesssim 2^{-j} \|f\|_A. 
 	\]
    When \(j\lesssim \log_2 R_0\), in view of \eqref{eq:Pi-u2}, we have 
    \[
    \|\Pi_j^{\Psi} u\|_\infty \lesssim \|f\|_A \sup_{j\lesssim \log_2 R_0} \|\zeta_j\|_A \lesssim \|f\|_A \lesssim 2^{-j} \|f\|_A. 
    \]
    So we obtain \eqref{eq:Pi-u1}, and complete our proof. 
\end{proof}

\subsubsection{Morrey-type inequality for generalized Orlicz-Besov spaces} 
We also prove a Morrey inequality for generalized Orlicz-Besov spaces, which will be used in the proofs for our main results. 
\begin{theorem}\label{Thm:Morrey2}
	Assume $S$ satisfies \ref{Aspt-phi'-1}. If $A$ is an $N$-function satisfies $A(t)\geq  [\psi^{-1}(ct^{1+\eps})]^d$ for some $\eps>0$, then  
	\begin{equation}\label{Eq:Morrey2}
		\l\|u\r\|_{\sC_\psi^{\frac{\eps}{1+\eps}}}\lesssim \|u\|_{B^{\psi}_A}.  
	\end{equation}
\end{theorem}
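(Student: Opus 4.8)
The plan is to estimate $\|\Pi^\psi_j u\|_\infty$ for each $j$ by summing the contributions of the $B^\psi_A$-blocks $\Pi^\psi_k u$, splitting the sum at a threshold chosen in terms of $j$. The starting point is the almost-orthogonality $\Pi^\psi_j u = \sum_{|k-j|\le N_0}\Pi^\psi_j\widetilde\Pi^\psi_k u$ together with the trivial bound $\|\Pi^\psi_j u\|_\infty\lesssim\sum_{k\ge -1}\|\Pi^\psi_j\Pi^\psi_k u\|_\infty$, but the key gain comes from an $L_A\to L^\infty$ bound for the localized pieces: since $\Pi^\psi_k u$ has Fourier support in a ball of radius $\asymp\psi^{-1}(2^k)$, Young's inequality \eqref{Eq:Young} with the dilated Schwartz function $\varrho_k$ gives
\begin{equation*}
\|\Pi^\psi_k u\|_\infty\lesssim \|\varrho_k\|_{A_*}\,\|\Pi^\psi_k u\|_A\lesssim \frac{(\psi^{-1}(2^k))^d}{A_*^{-1}\!\big(c(\psi^{-1}(2^k))^d\big)}\,\|\Pi^\psi_k u\|_A,
\end{equation*}
exactly as in the proof of Theorem~\ref{Thm:Morrey1}, using $\|\varrho_k\|_{A_*}\lesssim (\psi^{-1}(2^k))^d/A_*^{-1}(c(\psi^{-1}(2^k))^d)$ and Proposition~\ref{Prop-AA*}.

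Next I would feed in the hypothesis $A(t)\gtrsim[\psi^{-1}(ct^{1+\eps})]^d$. Writing $t=2^k$ and $R=\psi^{-1}(2^k)$, this is equivalent to a bound of the form $A(c' R^{d/(1+\eps)})\gtrsim R^d$, i.e. $A^{-1}(R^d)\lesssim R^{d/(1+\eps)}$, and by \eqref{Eq:AA*} it translates into $(\psi^{-1}(2^k))^d/A_*^{-1}(c(\psi^{-1}(2^k))^d)\lesssim A^{-1}\!\big(c(\psi^{-1}(2^k))^d\big)\lesssim (\psi^{-1}(2^k))^{d/(1+\eps)}$. Combining with the displayed bound and the definition of $\|u\|_{B^\psi_A}=\sup_k 2^k\|\Pi^\psi_k u\|_A$, we obtain the per-block estimate
\begin{equation*}
\|\Pi^\psi_k u\|_\infty\lesssim (\psi^{-1}(2^k))^{\frac{d}{1+\eps}}\,2^{-k}\,\|u\|_{B^\psi_A},\qquad k\ge -1.
\end{equation*}
Because $\psi\in\cR_{0}$-type growth makes $\psi^{-1}$ regularly varying with a positive index, the factor $(\psi^{-1}(2^k))^{d/(1+\eps)}2^{-k}$ is \emph{not} summable by itself, so the naive sum over all $k$ diverges; this is where the threshold argument is needed, and it is the main obstacle.

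To finish, I would bound $\|\Pi^\psi_j u\|_\infty$ by splitting $\sum_{k\ge -1}$ at $k=K$: for $k\le K$ use the per-block bound above, for $k>K$ use the smoothing of the convolution kernel, namely $\|\Pi^\psi_j\Pi^\psi_k u\|_\infty\lesssim \|\varrho_j\|_1\|\Pi^\psi_k u\|_\infty$ only helps in the other direction, so instead I use the low-frequency truncation: $\|\Pi^\psi_j\Pi^\psi_k u\|_\infty$ is nonzero only for $|j-k|\le N_0$, hence in fact the sum collapses and one should rather estimate $\|u-\sum_{k\le K}\Pi^\psi_k u\|_\infty$ differently. The cleaner route, mirroring the proof of Theorem~\ref{Thm:chart}: show the Hölder-type modulus directly. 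For $|x-y|\ll 1$ write $|u(x)-u(y)|\le\sum_{k\ge-1}\big(\,\|\Pi^\psi_k u\|_\infty\wedge|x-y|\,\|\nabla\Pi^\psi_k u\|_\infty\big)$, use \eqref{Eq:Bernstein} to get $\|\nabla\Pi^\psi_k u\|_\infty\lesssim\psi^{-1}(2^{k+1})\|\Pi^\psi_k u\|_\infty$, plug in the per-block bound, split the sum at $k=K$ with $2^K\asymp\psi(|x-y|^{-1})^{\frac{1}{1+\eps}}$-type choice and use assumption \eqref{aspt-int-psi} (valid here since $s=\eps/(1+\eps)<1$ and $r\mapsto r/\psi^s(r)$ is increasing under \ref{Aspt-phi'-1}) to sum the geometric-type series, concluding
\begin{equation*}
|u(x)-u(y)|\lesssim \|u\|_{B^\psi_A}\,\psi^{-\frac{\eps}{1+\eps}}(|x-y|^{-1})=\|u\|_{B^\psi_A}\,\rho^{\frac{\eps}{1+\eps}}(|x-y|),
\end{equation*}
together with $\|u\|_\infty\lesssim\|u\|_{B^\psi_A}$ (the $k\le 0$ terms), which by Theorem~\ref{Thm:chart} is exactly $\|u\|_{\sC^{\eps/(1+\eps)}_\psi}\lesssim\|u\|_{B^\psi_A}$. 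The delicate point is calibrating the optimal split point $K$ so that the low-frequency remainder $\sum_{k>K}\|\Pi^\psi_k u\|_\infty$ and the high-frequency part $|x-y|\sum_{k\le K}\psi^{-1}(2^k)\|\Pi^\psi_k u\|_\infty$ balance to give precisely the exponent $\eps/(1+\eps)$; this requires Karamata's theorem (Lemma~\ref{Le-Karamata}) and the regular-variation index of $\psi^{-1}$ coming from \ref{Aspt-phi'-1}.
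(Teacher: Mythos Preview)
Your starting idea --- a per-block $L_A\to L^\infty$ bound via H\"older's inequality $\|\Pi^\psi_k u\|_\infty\le 2\|\widetilde\varrho_{k+1}\|_{A_*}\|\Pi^\psi_k u\|_A$ together with $\|\widetilde\varrho_{k+1}\|_{A_*}\lesssim A^{-1}\!\bigl(c(\psi^{-1}(2^k))^d\bigr)$ --- is correct and in fact gives a cleaner proof than the paper's. The problem is your translation of the hypothesis. From $A(t)\ge[\psi^{-1}(ct^{1+\eps})]^d$ one sets $R=\psi^{-1}(ct^{1+\eps})$, so $t=(\psi(R)/c)^{1/(1+\eps)}$ and the inequality reads $A^{-1}(R^d)\le t\lesssim \psi(R)^{1/(1+\eps)}$, \emph{not} $R^{d/(1+\eps)}$. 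Since $\psi$ is slowly varying, $\psi(R)^{1/(1+\eps)}$ and $R^{d/(1+\eps)}$ are wildly different quantities; this is why you thought the per-block bound was insufficient. With the correct translation and $R\asymp\psi^{-1}(2^k)$ one has $A^{-1}(R^d)\lesssim 2^{k/(1+\eps)}$, hence
\[
\|\Pi^\psi_k u\|_\infty\ \lesssim\ 2^{k/(1+\eps)}\,\|\Pi^\psi_k u\|_A\ \le\ 2^{k/(1+\eps)}\cdot 2^{-k}\,\|u\|_{B^\psi_A}\ =\ 2^{-k\eps/(1+\eps)}\,\|u\|_{B^\psi_A},
\]
which is exactly the $\sC_\psi^{\eps/(1+\eps)}$ norm. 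The threshold argument, the appeal to \eqref{aspt-int-psi}, and Karamata's theorem are all unnecessary.

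The paper takes a different route: it represents $u=G_1 f$ with $f=(1+\phi(-\Delta))u$, writes $\Pi^\psi_j u=\int_0^\infty\e^{-t}p_t*\widetilde\Pi^\psi_j(\Pi^\psi_j f)\,\d t$, and bounds $\int_0^\infty\e^{-t}\|p_t*(\widetilde\varrho_{j+1}-\bar\varrho_j)\|_{A_*}\,\d t$ by the machinery already developed in the proof of Theorem~\ref{Thm:Morrey1} (the potential-density estimate \eqref{eq:g1} and the integral \eqref{eq:Pt-A*}), finishing with $\int_R^\infty A^{-1}(s^d)\psi'(s)\psi(s)^{-2}\,\d s\lesssim\psi(R)^{-\eps/(1+\eps)}$. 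Your approach bypasses the semigroup entirely: the Luxemburg-norm bound for the rescaled Schwartz kernel $\widetilde\varrho_{k+1}$ plays the role that the heat-kernel analysis plays in the paper, and the result drops out in one line once the hypothesis is read correctly. What the paper's approach buys is that no separate computation of $\|\varrho_k\|_{A_*}$ is needed --- everything is recycled from Theorem~\ref{Thm:Morrey1} --- at the cost of invoking the subordinate Brownian motion and the bound $\|\Pi^\psi_j f\|_A\lesssim\|u\|_{B^{\psi,1}_A}$.
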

\begin{proof}
        For any \(u\in B^{\psi}_A\) and \(j\geq -1\), put \(h_j=\varrho_{j+1}-\varrho_j \in L_{A_{*}}\) and  \(\widetilde{u}_j= \widetilde{\Pi}_j  u \in L_A\). Note that for any \(|x|\geq 2R \gg 1\), 
        \begin{align*}
        	|\Pi^{\psi}_j u (x)| =& \l| \int_{|y|\leq R } \widetilde{u}_j(y) h_j(x-y) \r|+\l|\int_{|y|> R } \widetilde{u}_j(y) h_j(x-y)  \r|\\
        	\lesssim & \|\widetilde{u}_{j}\|_{A} \|h_j\1_{B_R^c}\|_{A_{*}}+  \|\widetilde{u}_{j} \1_{B_R^c} \|_{A} \|h_j\|_{A_{*}} \to 0, \quad R\to \infty. 
        \end{align*}
        Therefore, \( \Pi^{\psi}_j u \in C_0\cap L_A\), for any \(u\in B^\psi_A\).  Similarly, one can verify that \((1+\phi(-\Delta)) \Pi^{\psi}_j u = (1+\phi(-\Delta))  \Pi^{\psi}_j u_j \in C_0\cap L_A\). Thus,  
        \[
        \Pi^{\psi}_j u=G_1(1+\phi(-\Delta)) \Pi^{\psi}_j u =  G_1 \Pi^{\psi}_j(1+\phi(-\Delta)) u, \quad u\in B^{\psi}_{A}. 
        \]
    Put \(f=(1+\phi(-\Delta)) u\). Following the proof for \eqref{eq:Pi-u2}, one can verify that 
    \begin{align*}
        \|\Pi^\psi_j u\|_\infty\leq& \l\|  \int_0^\infty \e^{-t} P_t~ \widetilde{\Pi}^\psi_j ~(\Pi^\psi_j f)\,\d t  \r\|_\infty \\
        \leq & \l\| \int_0^\infty \e^{-t}   p_t*(\widetilde{\varrho}_{j+1}-\bar{\varrho}_j)* (\Pi^\psi_j f)\,\d t  \r\|_\infty\\
        \lesssim& \|\Pi^\psi_j f\|_{A} \int_0^\infty \e^{-t} \| p_t*(\widetilde{\varrho}_{j+1}-\bar{\varrho}_j) \|_{A_{*}}\,\d t. 
    \end{align*}
    By our assumption $A(t)\geq [\psi^{-1}(ct^{1+\eps})]^d$, we have 
    \[
	A^{-1}(s)\leq C \psi^{\frac{1}{1+\eps}}(s^{\frac{1}{d}}). 
    \]
    Using this and following the proof for  \eqref{eq:Pt-A*}, we get 
    \begin{equation*}
	\begin{aligned}
            &\int_0^\infty \e^{-t} \| p_t*(\widetilde{\varrho}_{j+1}-\bar{\varrho}_j) \|_{A_{*}}\,\d t\\
            \lesssim &\int_{\frac{1}{2}\psi^{-1}(2^j)}^\infty   \frac{\d A^{-1}(s^d)}{\psi(s)} \lesssim  \int_{\frac{1}{2}\psi^{-1}(2^j)}^\infty  \frac{\psi^{\frac{1}{1+\eps}}(s) \psi'(s)}{\psi^2(s)}\,\d s\\
            \lesssim& \frac{1}{\psi^{\frac{\eps}{1+\eps}}(\frac{1}{2}\psi^{-1}(2^j))} \lesssim 2^{-\frac{\eps j}{1+\eps}}.
	\end{aligned}
    \end{equation*}
    Here we used the fact that for each $b>0$, 
	\[
	\lim_{R\to\infty} \frac{\psi\big(b \psi^{-1}(R)\big)}{R}=\lim_{R\to\infty} \frac{\psi\big(b \psi^{-1}(R)\big)}{\psi\big(\psi^{-1}(R)\big)} =b^\alpha. 
	\]
    Thus, 
    \[
	2^{\frac{\eps j}{1+\eps}} \|\Pi^\psi_j u\|_\infty \lesssim \|\Pi^\psi_j f\|_{A} \lesssim \|u\|_{B^{\psi,1}_A}. 
    \]
    So we complete the proof for \eqref{Eq:Morrey2}. 
\end{proof}

\section{Studies of Poisson equations}\label{Sec-Main}
In this section, we study the Poisson equation \eqref{Eq:PE} in the generalized Orlicz-Besov space. Our analysis is divided into two parts: In the first part, we explore the case where the coefficient $a$ is independent of $x$. Subsequently, we extended our findings to the more general case by using the method of frozen coefficients. 

\subsection{Spatially homogeneous case}
\begin{theorem}\label{Thm:Main0}
	Let $A$ be an $N$-function or $A=\infty$. 
	Assume \ref{Aspt:J-1} holds. Suppose that $a: \mR^d\to [0, \infty)$ is measurable, 
	\begin{equation}\label{cdt-k0}
		\int_{\mR^d} (1\wedge |z|^2)~ a(z)J(z)\,\d z<\infty, 
	\end{equation}
	and there are positive numbers $\rho_0, c_0 \in (0,1)$ such that 
	\begin{equation}\label{cdt-k1}
		a(z)\geq c_0, \ \mbox{ for all }z\in B_{\rho_0}. 
	\end{equation}
	Then there exists a constant $\lambda_0>0 $ such that for any $\lambda\geq \lambda_0$ and any $\alpha\in \mR$, 
	\begin{equation}\label{Eq:key0}
		\lambda \|u\|_{B^{\psi,\alpha}_{A}}+\|u\|_{B^{\psi,1+\alpha}_{A}}  \leq  C \|\lambda u-\cL u\|_{B^{\psi,\alpha}_{A}}, 
	\end{equation}
	provided that $u\in B^{\psi,1+\alpha}_{A}$. Here $C$ is a constant only depending on $d, \psi, \rho_0, c_0$ and $\alpha$. 
\end{theorem}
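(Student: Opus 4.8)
The strategy is to work block-by-block in the $\psi$-decomposition, reducing the estimate \eqref{Eq:key0} to a uniform bound on the resolvent $(\lambda-\cL)^{-1}$ acting on a single $\psi$-dyadic block. Since $\cL u(x) = \int (u(x+z)-u(x))a(z)J(z)\,\d z$ with $a$ independent of $x$, the operator $\cL$ is a Fourier multiplier: $\widehat{\cL u}(\xi) = -m(\xi)\widehat u(\xi)$ where $m(\xi) = \int (1-\cos(z\cdot\xi))\,2a(z)J(z)\,\d z \ge 0$ (after symmetrizing; the imaginary part is handled similarly or vanishes if $a$ is even, and in any case is dominated). Because $a\ge c_0$ on $B_{\rho_0}$ and $J$ is the kernel of a subordinate Brownian motion, the lower bound \eqref{cdt-k1} together with \ref{Aspt:J-1}--\ref{Aspt:J-2} should give $m(\xi) \gtrsim \psi(|\xi|)$ for $|\xi|$ large (comparing with the symbol of the subordinate Brownian motion truncated to $B_{\rho_0}$), while \eqref{cdt-k0} gives $m(\xi)\lesssim \psi(|\xi|) + C$ on the whole space; i.e. $m(\xi)+\lambda \asymp \psi(|\xi|)+\lambda$ on the Fourier support of $\Pi_j^\psi$ for $j$ large, namely $|\xi|\asymp \psi^{-1}(2^j)$, so $m+\lambda \asymp 2^j + \lambda$ there. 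This is the analytic heart of the matter.

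Concretely, I would first record the multiplier representation and establish the two-sided symbol bound just described, isolating a threshold $j_0$ (equivalently a $\lambda_0$) above which $m(\xi)\asymp 2^j$ on $\mathrm{supp}\,\chi_{j+1}$; for $j<j_0$ the term $\lambda\ge\lambda_0$ dominates and is handled trivially. Next, writing $f = \lambda u - \cL u$ so that on the Fourier side $\widehat u = \widehat f/(\lambda + m)$, I apply $\Pi_j^\psi$ and use the near-idempotence $\widetilde\Pi_j^\psi \Pi_j^\psi = \Pi_j^\psi$ to write $\Pi_j^\psi u = K_j * \Pi_j^\psi f$, where $K_j = \cF^{-1}\big[(\widetilde\chi_{j+1}-\bar\chi_j)/(\lambda+m)\big]$. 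The key lemma is then an $L^1$-bound $\|K_j\|_1 \lesssim (\lambda + 2^j)^{-1}$ uniformly in $j$, after which Young's inequality \eqref{Eq:Young} (valid for any $N$-function $A$, or $A=\infty$) gives $\|\Pi_j^\psi u\|_A \lesssim (\lambda+2^j)^{-1}\|\Pi_j^\psi f\|_A$; multiplying by $2^{j\alpha}$ and by $2^j$ respectively and taking the supremum over $j\ge -1$ yields both $\lambda\|u\|_{B^{\psi,\alpha}_A}$ and $\|u\|_{B^{\psi,1+\alpha}_A}$ controlled by $\|f\|_{B^{\psi,\alpha}_A}$.

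The main obstacle is the uniform $L^1$-estimate $\|K_j\|_1 \lesssim (\lambda+2^j)^{-1}$. The multiplier $(\widetilde\chi_{j+1}-\bar\chi_j)(\xi)/(\lambda+m(\xi))$ is supported in an annulus of size $\asymp\psi^{-1}(2^j)$, but because $m$ is built from a \emph{slowly varying} $\phi$ it is \emph{not} homogeneous, so the usual Mikhlin/scaling argument does not apply directly — this is exactly the difficulty flagged in Section \ref{sec-approach}. My plan to circumvent it is probabilistic: rather than estimating $K_j$ by stationary phase, represent the resolvent via the subordinate Brownian semigroup, $(\lambda+m(\sqrt{-\Delta}))^{-1} = \int_0^\infty \e^{-\lambda t} P_t\,\d t$ with $P_t = \e^{-t\,m(\sqrt{-\Delta})}$, and use \ref{Aspt:J-2} (so that the relevant potential densities are nonnegative decreasing, as exploited already in Lemma preceding \eqref{eq:g1} and in the proof of Theorem \ref{Thm:Morrey1}) to get clean pointwise control of $P_t$ convolved against the smooth bumps $\widetilde\varrho_{j+1}-\bar\varrho_j$. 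Then $\|K_j\|_1 \le \int_0^\infty \e^{-\lambda t}\|P_t * (\widetilde\varrho_{j+1}-\bar\varrho_j)\|_1\,\d t$, and the $t$-integral is estimated by splitting at $t\asymp 2^{-j}$: for small $t$ the difference of bumps has $L^1$ norm $\lesssim t\,2^j$ (a Bernstein/mean-value bound, using $\|\nabla(\widetilde\varrho_{j+1}-\bar\varrho_j)\|_1 \lesssim \psi^{-1}(2^j)$ and the comparison $m\asymp 2^j$ on the support), and for large $t$ one uses $\|P_t*(\widetilde\varrho_{j+1}-\bar\varrho_j)\|_1 \le \|\widetilde\varrho_{j+1}-\bar\varrho_j\|_1 \lesssim 1$ together with the exponential decay $\e^{-c\,2^j t}$ coming from the spectral lower bound $m\gtrsim 2^j$ on the annulus; summing gives the claimed $(\lambda+2^j)^{-1}$. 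A secondary technical point is the imaginary part of $m$ (present when $a$ is not symmetric in $z$): it contributes a term to $\cL$ of the form $\int \nabla u(x)\cdot z\, a(z)J(z)\,\d z$ after symmetrization, which by \eqref{eq:int-zJ(z)} is of lower order $\lesssim \psi^{-1}(2^j)\cdot(\text{something})$, hence absorbable, but this needs to be checked carefully and is where the hypothesis \eqref{cdt-k0} and Proposition \ref{Prop-l-psi} enter.
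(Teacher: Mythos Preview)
Your high-level reduction --- block-wise resolvent bound via the semigroup representation and Young's inequality --- matches the paper, but the heart of the matter, the $L^1$ estimate $\int_0^\infty \e^{-\lambda t}\|P_t*(\widetilde\varrho_{j+1}-\bar\varrho_j)\|_1\,\d t\lesssim 2^{-j}$, is not correctly argued in your sketch. The small-$t$ claim ``$\|P_t*(\widetilde\varrho_{j+1}-\bar\varrho_j)\|_1\lesssim t\,2^j$'' is false: at $t=0$ the left side equals $\|\widetilde\varrho_{j+1}-\bar\varrho_j\|_1\asymp 1$, not $0$. For large $t$, a spectral lower bound $\Re m(\xi)\gtrsim 2^j$ on the annulus gives $L^2$ decay $\|P_t*g\|_2\lesssim \e^{-c2^jt}\|g\|_2$, but not $L^1$ decay; converting this into an $L^1$ multiplier bound would require regularity of $m$ that you have not established (and which is delicate precisely because $\phi$ is slowly varying). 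Also, your proposed treatment of the imaginary part of $m$ invokes Proposition~\ref{Prop-l-psi} and hence \ref{Aspt:J-3}, which is \emph{not} assumed in this theorem.

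The paper avoids all of this by a three-step reduction that you have not identified. \emph{Step 1} treats only $a\equiv 1$, where $P_t$ is the subordinate Brownian semigroup and one can write $p_t=\int_0^\infty h_u\,s_t(\d u)$ with $h_u$ Gaussian; then $\|P_t*(\varrho^{R'}-\varrho^R)\|_1\lesssim \int_0^\infty (1\wedge (\sqrt u R)^{-1})\,s_t(\d u)$, and the $t$-integral splits into $\int \e^{-\lambda t}\bP(S_t\le R^{-2})\,\d t\lesssim(\lambda+\psi(R))^{-1}$ (via the exponential Chebyshev bound $\bP(S_t\le R^{-2})\le \e^{1-t\psi(R)}$) plus a piece involving the potential density $\mu_\lambda$, where assumption \ref{Aspt:J-2} (monotonicity of $\mu_\lambda$) is used exactly as in \eqref{eq:g1}. \emph{Step 2} extends to $a\ge c_0$ on all of $\mR^d$ by factoring the L\'evy measure as $J\,\d z+(a-c_0)J\,\d z$, so the semigroup factors as $P_t=\bar P_t\widetilde P_t$ with $\bar P_t$ handled by Step 1 and $\widetilde P_t$ an $L^1$-contraction. \emph{Step 3} treats the general case by writing $\cL=\bar\cL-R$ where $\bar\cL$ has coefficient $a\1_{B_{\rho_0}}+\1_{B_{\rho_0}^c}$ (so Step 2 applies) and $Ru=\int_{B_{\rho_0}^c}(u(\cdot+z)-u)(a-1)J\,\d z$ is a bounded operator on every $B^{\psi,\alpha}_A$; choosing $\lambda_0$ large absorbs $\|Ru\|_{B^{\psi,\alpha}_A}\lesssim\|u\|_{B^{\psi,\alpha}_A}$. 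This decomposition is what lets the paper exploit the subordination structure (available only for $a\equiv1$) and sidestep symbol analysis entirely --- in particular, no discussion of the imaginary part of $m$ is needed.
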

\begin{remark}
	We would like to emphasize that Theorem \ref{Thm:Main0} does not require the assumption that $\alpha\geq 0$.
\end{remark}

\begin{proof}
	Without loss of generality, we can assume that $c_0=1$ to simplify the analysis. By utilizing the condition in \eqref{cdt-k0}, we can identify a L\'evy process $Z$ whose infinitesimal generator corresponds to the operator $\cL$.  As presented in the proof for Theorem \ref{Thm:Morrey2}, for any \(u\in B^{\psi}_A\), we have 
	\[
	\Pi^{\psi}_j u=G_\lambda(\lambda - \cL) \Pi^{\psi}_j u =  G_\lambda \Pi^{\psi}_j(\lambda -\cL) u. 
	\]	 
	Using this and letting $f=\lambda u-\cL u$, we obtain 
	\begin{equation}\label{eq:low}
		\begin{aligned}
			\| \Pi_{j}^{\psi} u\|_{A}=&\l\| \Pi_{j}^{\psi} G_\lambda f \r\|_A = \l\| \Pi_{j}^{\psi}  \int_0^{\infty} \e^{-\lambda t} (P_t f)\d t \r\|_A \\
			\leq& \int_0^\infty \e^{-\lambda t} \|P_t 
			\Pi_{j}^{\psi} f\|_A\,\d t  \leq \lambda^{-1} \|\Pi_{j}^{\psi} f\|_A.   
		\end{aligned}
	\end{equation}
	
	This yields 
	\begin{equation}\label{eq:key0}
		\lambda \|u\|_{B^{\psi,\alpha}_{A}}\leq \|f\|_{B^{\psi,\alpha}_{A}}= \|\lambda u-\cL u\|_{B^{\psi,\alpha}_{A}}.
	\end{equation}
	Thus, it remains to show that the second term on the left-hand side of \eqref{Eq:key0} can be dominated by $C \|\lambda u-\cL u\|_{B^{\psi,\alpha}_{A}}$. 
	
	We divide the proof into several steps.
	
	\noindent{\bf Step 1.} 
	We first consider the case that $a\equiv 1$. In this case, $\cL$ is the infinitesimal generator of a subordinate Brownian motion $Z_t=\sqrt{2}B_{S_t}$. It is enough to show 
	\begin{equation}\label{eq:resolve_f}
		\| \Pi_{j}^{\psi} u\|_{A}=\l\| \Pi_{j}^{\psi}  \int_0^{\infty} \e^{-\lambda t} (P_t f)\d t \r\|_A \leq C 2^{-j} \|\Pi_{j}^{\psi} f\|_A, \quad j\geq 0.  
	\end{equation}
	Using the fact that $\widetilde{\Pi}^{\psi}_j\Pi_{j}^{\psi}=\Pi_{j}^{\psi}~(j\geq 0)$, we see that 
	\[
	\Pi_{j}^{\psi} (P_t f) = P_t (\Pi_{j}^{\psi} f)= P_t  (\widetilde \Pi_{j}^{\psi} \Pi_{j}^{\psi} f).
	\]
	Thus, 
	\begin{equation}\label{eq:ptf1}
		\|\Pi_{j}^{\psi} (P_t f)\|_A \leq  \| P_t (\widetilde \varrho_{j+1} -\bar \varrho_{j}) \|_1~ \|\Pi_{j}^{\psi} f\|_A, 
	\end{equation}
	due to Young's inequality \eqref{Eq:Young}. So our problem boils down to estimating $\| P_t (\widetilde \varrho_{j+1} -\bar \varrho_{j}) \|_1$ $(j\geq 0)$. For any $R'>R\gg 1$,  put 
	\[
	\varrho^R(x)= R^d \varrho(R x), \quad \varrho^{R'}(x)= (R')^d \varrho(R' x). 
	\]
	Employing the procedure deducing \eqref{eq:morrey3}, for  any $t>0$, we have 
	\begin{equation}\label{eq:ptf2}
		\begin{aligned}
			\| P_t (\varrho^{R'}  - \varrho^{R} ) \|_1\lesssim& \int_0^\infty\int_{\mR^d}  \l(1\wedge \frac{|z|}{\sqrt{u}R}\r) ~|\varrho(z)| \d z~s_t(\d u) \\
			\lesssim& \int_0^\infty \l(\frac{1}{\sqrt{u}R} \int_0^{\sqrt{u}R} |\varrho(r)| r^{d}\,\d r + \int_{\sqrt{u}R}^{\infty} |\varrho(r)| r^{d-1}\,\d r \r)  s_t(\d u) . 
		\end{aligned}
	\end{equation}
	Noting that $|\varrho(r)|\lesssim 1\wedge r^{-k} \, (\forall k>0)$, one sees that 
	\begin{equation}\label{eq:ptf3}
		\frac{1}{N} \int_0^{N} |\varrho(r)| r^{d}\,\d r \lesssim 1\wedge \frac{1}{N} \,\mbox{ and } \int_{N}^{\infty} |\varrho(r)| r^{d-1}\,\d r  \lesssim 1\wedge \frac{1}{N}, \  \forall N>0. 
	\end{equation}
	Combining \eqref{eq:ptf2} and \eqref{eq:ptf3}, we obtain  
	\begin{equation}\label{eq:ptf4}
		\begin{aligned}
			 &~\| P_t (\varrho^{R'}  - \varrho^{R} ) \|_1 \lesssim \int_0^\infty \l(1\wedge \frac{1}{\sqrt{u}R}\r) s_t(\d u) \\
			\leq &~  \bP \l(S_t\leq R^{-2}\r) + \frac{1}{R}\bE \l(S_t^{-\frac{1}{2}}\1_{\{S_t\geq R^{-2}\}}\r)\\
			=:&~ I_1(t,R)+I_2(t,R). 
		\end{aligned}
	\end{equation}
	For $I_1$, noting that $\e^{-R^2 S_t} \geq \e^{-1}\1_{\{S_t\leq R^{-2}\}}$, we have 
	\begin{equation*}
		I_1(t,R)=\bP \l(S_t\leq R^{-2}\r) \leq  \bE \exp( {1-R^2 S_t} ) \lesssim \e^{-t\psi(R)}.  
	\end{equation*}
	Thus, 
	\begin{equation}\label{eq:st1}
		\int_0^\infty \e^{-\lambda t}I_1(t,R)\,\d t \lesssim \frac{1}{\lambda +\psi(R)}. 
	\end{equation}
    For $I_2$, in virtue of Lemma \ref{Le-Inter-Karamata} (i) with $\sigma = -\frac{1}{2}$, $s = R^2$ and $f=1/(\lambda+\phi)$, and \eqref{eq:phi-U}, we have
    \begin{equation}\label{eq:st2}
	\begin{aligned}
            &\int_0^\infty \e^{-\lambda t} I_2(t, R)\,\d t=\frac{1}{R} \int_0^\infty \e^{-\lambda t}\bE \l(S_t^{-\frac{1}{2}}\1_{S_t\geq R^{-2}}\r)\,\d t\\
            =&\frac{1}{R} \int_{R^{-2}}^\infty u^{-\frac{1}{2}} \int_0^\infty \e^{-\lambda t}~\bP(S_t\in \d u)\,\d t =\frac{1}{R} \int_{R^{-2}}^\infty u^{-\frac{1}{2}}  U_\lambda (\d u)\\
            =& \frac{1}{R} \int_{(0,\infty)^2} \1_{\{R^{-2}<u<t\}} t^{-\frac{3}{2}} U_\lambda (\d u) \d t\\
            \lesssim& \frac{1}{R} \int_{R^{-2}}^\infty \frac{t^{-\frac{3}{2}}}{\lambda+\phi(t^{-1})} \d t \lesssim \frac{1}{\lambda+\psi(R)}. 
	\end{aligned}
    \end{equation}
    Here we use Karamata's  theorem in the last inequality. By \eqref{eq:ptf4}-\eqref{eq:st2}, we get 
	\begin{equation}\label{eq:ptf5}
		\int_0^\infty \e^{-\lambda t}\|P_t (\varrho^{R'}  - \varrho^{R} ) \|_1\,\d t\lesssim \frac{1}{\psi(R)}, \quad R'\geq  R\gg 1. 
	\end{equation}
  In view of \eqref{eq:ptf1} and \eqref{eq:ptf5},  there is a constant \(j_0\gg  1\) such that for any \(j\geq j_0\), it holds that 
	\begin{equation*}
		\begin{aligned}
				\| \Pi_{j}^{\psi} u\|_{A} = &\l\| \Pi_{j}^{\psi}  \int_0^{\infty} \e^{-\lambda t} (P_t f)\d t \r\|_A
			\overset{\eqref{eq:ptf1}}{\leq} \|\Pi_{j}^{\psi} f\|_A \int_0^\infty \e^{-\lambda t} \| P_t (\widetilde \varrho_{j+1} -\bar \varrho_{j}) \|_1\,\d t\\
			\overset{\eqref{eq:ptf5}}{\lesssim} &  \frac{\|\Pi_j^\psi f\|_A}{\psi\l(\frac{1}{2}\psi^{-1}(2^j)\r)}\lesssim {2^{-j}}\|\Pi_j^\psi f\|_A. 
		\end{aligned}
	\end{equation*}
	Here we choose $R'= 2\psi^{-1}(2^{j+1})$ and $R= \tfrac{1}{2}\psi^{-1}(2^j)$ in \eqref{eq:ptf5}, and use the fact that for each $b>0$, 
    \[
    \lim_{R\to\infty} \frac{\psi\big(b \psi^{-1}(R)\big)}{R}=\lim_{R\to\infty} \frac{\psi\big(b \psi^{-1}(R)\big)}{\psi\big(\psi^{-1}(R)\big)} =1. 
    \]
    For \(0\leq j < j_0\),  in view of \eqref{eq:low}, we have 
    \[
    \| \Pi_{j}^{\psi} u\|_{A}\lesssim \|\Pi_j^\psi f\|_A\lesssim 2^{-j_0} \|\Pi_j^\psi f\|_A \lesssim 2^{-j}  \|\Pi_j^\psi f\|_A . 
    \]
    So we complete the proof for \eqref{eq:resolve_f} when $a(x,z)=a(z)=1$.
	
	\noindent{\bf Step 2}. Now assume that $a$ satisfies \eqref{cdt-k1} with $c_0=1$ and $\rho_0=\infty$. 
	Let 
	\[
	\bar{\nu}(\d z)= J(z) \d z\  \mbox{ and } \ \widetilde{\nu}(\d z)=\underbrace{(a(z)-1)}_{\geq 0} J(z)\d z. 
	\]
	Since $\bar{\nu}$ and $\widetilde{\nu}$ are two L\'evy measures, there are two pure jump L\'evy processes $\bar Z$ and $\widetilde{Z}$ associated with $\bar \nu$ and $\widetilde \nu$, respectively. Let $\bar P_t$ and $\widetilde P_t$ be the semigroups corresponding to $\bar Z$ and $\widetilde Z$, respectively. 
	Noting that $\Pi_{j}^{\psi} P_t f = \Pi_{j}^{\psi} \bar P_t\widetilde P_t f$,
	by {\em Step 1}, one sees that 
	\begin{equation*}
		\begin{aligned}
			\l\| \Pi_{j}^{\psi}  \int_0^{\infty} \e^{-\lambda t} (P_t f)\d t \r\|_A \overset{\eqref{eq:ptf1}}{\leq} \|\Pi_{j}^{\psi} f\|_A \int_0^\infty \e^{-\lambda t} \| P_t (\widetilde \varrho_{j+1} -\bar \varrho_{j}) \|_1\,\d t \lesssim  2^{-j} {\|\Pi_j^\psi f\|_A}. 
		\end{aligned}
	\end{equation*}
	Thus, we get \eqref{eq:resolve_f} in the case that $a$ is bounded below by $c_0>0$. 
	
	\noindent{\bf Step 3.}  Now suppose $a$ only satisfies \eqref{cdt-k1} with $c_0=1$. Set 
	\[
	\bar{\cL} u:= \int_{B_{\rho_0}} (u(x+z)-u(x)) \l({ a(z)\1_{B_{\rho_0}}(z) +\1_{B_{\rho_0}^c}(z)} \r)J(z)\,\d z 
	\]
	and 
	\[
	R u:= \int_{\mR^d} (u(x+z)-u(x)) \l((a(z)-1)\1_{B_{\rho_0}^c}\r)J(z)\,\d z. 
	\]
	Then $\bar \cL$ satisfies \eqref{cdt-k1} with $c_0=1$ and $\rho_0=\infty$. By {\em Step 2} and noting that $\lambda u- \bar \cL u = \lambda u - \cL u + R u$, we get  
	\begin{equation}\label{eq:u-Ru}
		\|u\|_{B^{\psi,1+\alpha}_{A}}\lesssim \|\lambda u-\bar \cL u\|_{B^{\psi,\alpha}_{A}} \lesssim \|\lambda u- \cL u\|_{B^{\psi,\alpha}_{A}}+  \|R u\|_{B^{\psi,\alpha}_{A}}. 
	\end{equation}
	Since 
	\[
	\|R f\|_A \lesssim \|f\|_A \int_{|z|\geq \rho_0}  (1+|a(z)|)J(z)\,\d z \lesssim \|f\|_A, 
	\]
	one sees that 
	\begin{equation}\label{eq:Ru}
		\begin{aligned}
			\|Ru\|_{B^{\psi,\alpha}_{A}}=&\sup_{j\geq -1}2^{j\alpha}\|\Pi_{j}^{\psi} R u\|_{A}=\sup_{j\geq -1}2^{j\alpha} \|R \Pi_{j}^{\psi}  u\|_{A}\\
			\lesssim& \sup_{j\geq -1}2^{j\alpha} \|\Pi_{j}^{\psi} u\|_{A}=\|u\|_{B^{\psi,\alpha}_{A}}.
		\end{aligned}  
	\end{equation}
	Combining \eqref{eq:key0}, \eqref{eq:u-Ru} and \eqref{eq:Ru}, we obtain 
	\[
	\lambda\|u\|_{B^{\psi,\alpha}_{A}}+\|u\|_{B^{\psi,1+\alpha}_{A}} \leq  C \l( \|\lambda u-\cL u\|_{B^{\psi,\alpha}_{A}}+ \|u\|_{B^{\psi,\alpha}_{A}} \r).
	\]
	Choosing $\lambda_0=2C$, we obtain \eqref{Eq:key0} for all $\lambda\geq \lambda_0$. This completes our proof. 
\end{proof}

\subsection{Spatially inhomogeneous case}
Before proving our main results, we need to make some necessary preparations. Under assumption \ref{Aspt:J-1}, one can verify that there is an integer  $N \geq 1$ such that for all $s\geq 1$, 
\begin{equation}\label{eq:inversepsi}
	\psi^{-1}(2^{1-N } s)\leq \frac{3}{8}\psi^{-1}(s). 
\end{equation}
Define 
\[
S_k^{\psi} f= \sum_{l\prec k} \Pi_{l}^{\psi} f, \quad   T^{\psi}_{f}g= \sum_{k}S_{k}^{\psi} f \cdot \Pi_{k}^{\psi} g,\quad R^{\psi}(f,g)= \sum_{k\sim l} \Pi_{k}^{\psi} f \cdot \Pi_{l}^{\psi} g. 
\]
Here $l\prec k$ means $l< k-N$ and $k\sim l$ means $|k-l|\leq N$.  Thus, 
\[
f\cdot g= \sum_{k,l} \Pi_{k}^{\psi} f \cdot \Pi_{l}^{\psi} g =  T^{\psi}_{f}g +  T^{\psi}_{g}f+ R^{\psi}(f,g). 
\]

For $R_1, R_2\geq0$ with $R_1<R_2$,   denote \(D_{R_1, R_2}:=\{x\in\mR^d: R_1\leq |x| \leq R_2\}\). The following simple fact will be used frequently: 
For any two functions $\hat{f}$ and $\hat{g}$ whose supports are  
in $B_{R_0}$ and $D_{R_1,R_2}$,  respectively, then 
\begin{equation}\label{eq:support0}
{\rm supp} [\hat{f}*\hat{g}] \subseteq 
D_{(R_1-R_0)^+, R_2+R_0}. 
\end{equation}
Noting that \(\mathrm{supp}[\cF(S_k^\psi f)] \subseteq B_{\psi^{-1}(2^{k-N})} \) and \(\mathrm{supp}[\cF(\Pi_k^\psi g)] \subseteq D_{\frac{3}{4}\psi^{-1}(2^{k}), \psi^{-1}(2^{k+1})} \), 
in view of \eqref{eq:support0} and \eqref{eq:inversepsi}, we have 
\begin{align*}
    {\rm supp}\l[ \cF\l(S_k^\psi f\cdot \Pi_k^{\psi}g\r) \r]=&{\rm supp}\l[ \cF\l(S_k^\psi f\r)*\cF\l(\Pi_k^{\psi}g\r)\r]\subseteq D_{\frac{3}{8}\psi^{-1}(2^{k}),~ 2\psi^{-1}(2^{k+1})}\\
    \subseteq& D_{\psi^{-1}(2^{k-N}),~ \frac{3}{4}\psi^{-1}(2^{k+N})}. 
\end{align*}
This yields that  
\begin{equation}\label{eq:support1}
	\Pi_j^{\psi} (S_k^\psi f\cdot \Pi_k^{\psi}g)= 0,  \mbox{ if } |j-k|\geq N. 
\end{equation}
Similarly, one can also verify that 
\begin{equation}\label{eq:support2}
    \Pi_j^{\psi} \l(\sum_{k: k\sim l}\Pi_k^\psi f\cdot \Pi_l^{\psi}g\r)=0, \mbox{ if } j\geq k+2N. 
\end{equation}

\begin{lemma}
	Let $A$ be an $N$-function or $A=\infty$.  Let $a: \mR^d\times \mR^d\to \mR$ be a bounded measurable function (need not be positive). Assume the linear operator $\cL$ is given by \eqref{Eq:Oprt}, and \ref{Aspt:J-1} are satisfied. 
	\begin{enumerate}[(a)]
		\item 
		For any $\theta>0$, it holds that 
		\begin{equation}\label{Eq:Lu-A1}
			\|\cL u\|_{B^{\psi,0}_A} \leq C \|u\|_{B^{\psi,1}_{A}}\cdot\sup_{z\in \mR^d} \|a(\cdot,z)\|_{\sC^{\theta}_{\psi}}, 
		\end{equation} 
		where $C$ only depends on $d,c_0, \psi$ and $\theta$.
		\item 
		For any $\alpha>0$ and $\theta\in (0,\alpha)$, it holds that  \begin{equation}\label{Eq:Lu-A2}
			\|\cL u\|_{B^{\psi,\alpha}_A} \leq C \l(\|u\|_{B^{\psi,1+\alpha}_{A}}\cdot\sup_{z\in \mR^d} \|a(\cdot,z)\|_{\sC^{\theta}_{\psi}}+\|u\|_{B^{\psi,1+\theta}_{A}}\cdot\sup_{z\in \mR^d}\|a(\cdot,z)\|_{\sC^{\alpha}_{\psi}}\r),
		\end{equation}  
	\end{enumerate}
	where the constant $C$ only depends on $d,c_0, \psi,\alpha$ and $\theta$. 
\end{lemma}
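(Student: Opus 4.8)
The plan is to decompose $\cL u$ into pieces indexed by $\psi$-dyadic blocks and to use a paraproduct-type analysis adapted to the intrinsic decomposition. For fixed $z$, write $a(x,z)=\sum_{k}\Pi^\psi_k a(\cdot,z)(x)$ and $u(x+z)-u(x)=\sum_{l}\big(\Pi^\psi_l u(x+z)-\Pi^\psi_l u(x)\big)$, so that
\[
\cL u(x)=\int_{\mR^d}\Big(\sum_{k,l}\Pi^\psi_k a(\cdot,z)(x)\,\big(\Pi^\psi_l u(x+z)-\Pi^\psi_l u(x)\big)\Big)J(z)\,\d z.
\]
Split the sum over $(k,l)$ into the three regions $k\prec l$, $l\prec k$, and $k\sim l$, mirroring the paraproduct decomposition $a\cdot\Delta u = T_a(\cdots)+T_{\cdots}a+R(a,\cdots)$ recorded before the lemma, together with the Fourier-support localization \eqref{eq:support}. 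I would then apply $\Pi^\psi_j$ to $\cL u$; by \eqref{eq:support} only finitely many $(k,l)$ contribute to each $j$, which is what converts the sum estimates into the stated $B^{\psi,\alpha}_A$ bounds.

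For the individual pieces the key quantitative inputs are: (i) the Bernstein-type inequality \eqref{Eq:Bernstein}, $\|\nabla\Pi^\psi_l f\|_A\le C\psi^{-1}(2^{l+1})\|\Pi^\psi_l f\|_A$; (ii) the two integral estimates from Proposition \ref{Prop-l-psi}, namely $\int_{B_r}|z|J(z)\,\d z\lesssim r\psi(r^{-1})$ and $\int_{B_r^c}J(z)\,\d z\lesssim\psi(r^{-1})$; and (iii) Young's inequality in Orlicz spaces \eqref{Eq:Young} to pull the $L_A$ norm onto the $u$-factor and an $L^\infty$ (or $L^1$) norm onto the $a$-factor. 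The typical estimate is: for the difference operator applied to a block at frequency $\psi^{-1}(2^l)$, use the mean-value bound $|\Pi^\psi_l u(x+z)-\Pi^\psi_l u(x)|\le |z|\,\|\nabla\Pi^\psi_l u\|_\infty$ on $B_{r_l}$ with $r_l\asymp 1/\psi^{-1}(2^l)$ and the crude bound $2\|\Pi^\psi_l u\|_\infty$ on $B_{r_l}^c$; combined with (i) and (ii) this gives $\int|\Pi^\psi_l u(x+z)-\Pi^\psi_l u(x)|J(z)\,\d z\lesssim \psi\big(\psi^{-1}(2^l)\big)\|\Pi^\psi_l u\|_A=2^l\|\Pi^\psi_l u\|_A$, so each block of $u$ gains exactly one intrinsic derivative — this is the $\alpha=0$ analogue of the gain-of-$\alpha$-derivatives in the stable case. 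Summing over $l$ with the block norms $\|\Pi^\psi_l u\|_A\le 2^{-l(1+\alpha)}\|u\|_{B^{\psi,1+\alpha}_A}$ and using $\theta>0$ (resp. $\theta<\alpha$) to make the geometric-type series over $k$ (resp. over $l$ in the resonant/low-high terms) converge yields \eqref{Eq:Lu-A1} and \eqref{Eq:Lu-A2}; the extra term in \eqref{Eq:Lu-A2} arises from the resonant part $R^\psi$, where one cannot afford the full $\alpha$-regularity on both factors simultaneously and must trade $\|u\|_{B^{\psi,1+\alpha}_A}\|a\|_{\sC^\theta_\psi}$ against $\|u\|_{B^{\psi,1+\theta}_A}\|a\|_{\sC^\alpha_\psi}$.

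For part (a) the argument is a one-parameter version of the above: estimate $\|\Pi^\psi_j\cL u\|_A$ by controlling the low-high term $T^\psi_a(\Delta u)$, which is essentially diagonal and gives $2^{-j\cdot 0}$ after using that the $\sC^\theta_\psi$-norm of $a$ controls $\sum_k 2^{-k\theta}\|\Pi^\psi_k a\|_\infty$, plus the high-low and resonant terms which are summable because of the one-derivative gain and $\theta>0$; no loss of regularity on $u$ beyond $B^{\psi,1}_A$ is incurred, so one does not need $\alpha$-regularity of $a$ here. The main obstacle I anticipate is purely bookkeeping rather than conceptual: one must be careful that the difference operator $v\mapsto v(\cdot+z)-v(\cdot)$ interacts correctly with the Orlicz norm (translation invariance of $\|\cdot\|_A$ handles this) and, more delicately, that the ``non-local mean value'' estimate is performed at the right scale $r_l\asymp 1/\psi^{-1}(2^l)$ so that Proposition \ref{Prop-l-psi} can be applied with $r=r_l\in(0,1)$ for $l$ large — the finitely many small-$l$ blocks are absorbed into the constant using the crude bound $\|\cL u\|_A\lesssim \|u\|_A\int(1\wedge|z|)J(z)\,\d z$. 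Keeping track of the two separate roles played by $\theta$ (summability of the $a$-sum in the low-high part, and of the $u$-sum in the resonant part) is where an error would most easily creep in.
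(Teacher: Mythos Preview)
Your approach is essentially the paper's: apply $\Pi^\psi_j$ to $\cL u$, decompose the product $a_z\cdot\delta_z u$ into the three paraproduct pieces via \eqref{eq:support}, control $\|\delta_z\Pi^\psi_l u\|_A$ by $(1\wedge|z|\psi^{-1}(2^{l+1}))\|\Pi^\psi_l u\|_A$ using the mean-value theorem together with \eqref{Eq:Bernstein}, and then integrate in $z$ using Proposition~\ref{Prop-l-psi}. The summations over the frequency indices go through exactly as you describe.

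One bookkeeping point is misattributed, though. The second term in \eqref{Eq:Lu-A2}, namely $\|u\|_{B^{\psi,1+\theta}_A}\sup_z\|a(\cdot,z)\|_{\sC^\alpha_\psi}$, does \emph{not} come from the resonant part~$R^\psi$; it comes from the ``$u$-low, $a$-high'' paraproduct piece (in your labeling, $l\prec k$ with $k\sim j$). There the $a$-block sits at frequency $\sim j$ and must supply the full factor $2^{-\alpha j}$, which forces $\|a\|_{\sC^\alpha_\psi}$; the sum over low $u$-frequencies $\sum_{l\prec j}2^{-\theta l}$ then converges only because $\theta>0$, which is why $\|u\|_{B^{\psi,1+\theta}_A}$ appears rather than $\|u\|_{B^{\psi,1+\alpha}_A}$. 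If you try to get away with $\|a\|_{\sC^\theta_\psi}$ on this piece you obtain only $2^{-\theta j}$, which is insufficient since $\theta<\alpha$. By contrast the resonant part is harmless: with $k\sim l>j-2N$ one may use merely $\|\Pi^\psi_k a_z\|_\infty\le\|a\|_\infty$ and $\|\Pi^\psi_l u\|_A\le 2^{-(1+\alpha)l}\|u\|_{B^{\psi,1+\alpha}_A}$, and the tail sum $\sum_{l>j-2N}2^{-\alpha l}\lesssim 2^{-\alpha j}$ already gives the first term of \eqref{Eq:Lu-A2}. So your anticipated ``error would most easily creep in'' is precisely here --- swap the roles of the resonant and the $u$-low/$a$-high pieces and the argument is complete.
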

\begin{proof}
	Set 
	\[
	\delta_z u(x)= u(x+z)-u(x), \quad a_{z}(x)=a(x,z). 
	\]
	By definition, 
	\begin{equation}\label{eq:PiLu}
		\begin{aligned}
			\Pi_{j}^{\psi} \cL u(x)=& \Pi_{j}^{\psi} \l( \int_{\mR^d} \delta_z  u(\cdot) \ a(\cdot,z)J(z)~ \d z\r) \\
			=& \int_{\mR^d}  \left (\Pi_{j}^{\psi} \sum_{k,l\geq -1} \Pi_{k}^{\psi} (\delta_z u) \cdot \Pi_{l}^{\psi} a_z \right)(x) \ J(z)\,\d z=:  \int_{\mR^d}  I_j(x,z)~J(z)\,\d z.
		\end{aligned}
	\end{equation}
	We drop the variable $x$ below for simplicity. By \eqref{eq:support1} and \eqref{eq:support2}, 
    \setlength{\arraycolsep}{-1pt}
	\begin{eqnarray*}
		|I_j(z)| &=&\left|\Pi_{j}^{\psi} \sum_{k, l\geq -1} \big[(\delta_z  \Pi_{k}^{\psi}  u)\cdot \Pi_{l}^{\psi} a_z \big] \right| \\
		&=& \left|\Pi_{j}^{\psi} \left(\sum_{k\prec l} \delta_z  \Pi_{k}^{\psi}  u\cdot \Pi_{l}^{\psi} a_z +\sum_{k\succ l} \delta_z  \Pi_{k}^{\psi}  u\cdot \Pi_{l}^{\psi} a_z +\sum_{k\sim l} \delta_z  \Pi_{k}^{\psi}  u\cdot \Pi_{l}^{\psi} a_z \right)\right|\\
		&\overset{\eqref{eq:support1}, \eqref{eq:support2}}{\leq}&  \sum_{k\prec l\sim j} \big|\delta_z  \Pi_{k}^{\psi}  u\cdot \Pi_{l}^{\psi} a_z \big|+ \sum_{l\prec k\sim j} \big| \delta_z  \Pi_{k}^{\psi}  u\cdot \Pi_{l}^{\psi} a_z \big| +\sum_{l\sim k; k> j-2N} \big|\delta_z  \Pi_{k}^{\psi}  u\cdot \Pi_{l}^{\psi} a_z \big|\\
		&=:& I^{(1)}_j(z)+I^{(2)}_j(z)+I^{(3)}_j(z).
	\end{eqnarray*}
	By mean value theorem, we get 
	\begin{align*}
		|\delta_z  \Pi_{k}^{\psi} u(x)|= \left| \int_0^1 z\cdot \nabla \Pi_{k}^{\psi} u(x+tz) 
		\d t\right| \leq  |z| \int_0^1 \l| \nabla \Pi_{k}^{\psi} u(x+tz) \r|\,\d t. 
	\end{align*}
	Thus, 
	\begin{equation*}
		|\delta_z  \Pi_{k}^{\psi} u(x)|\leq \l(|\Pi_{k}^{\psi} u(x)|+|\Pi_{k}^{\psi} u(x+z)|\r) \wedge \l(|z| \int_0^1 \l| \nabla \Pi_{k}^{\psi} u(x+tz) \r|\,\d t \r). 
	\end{equation*}
	This and \eqref{Eq:Bernstein} imply that 
	\begin{equation}\label{eq:I1}
		\begin{aligned}
			\|\delta_z  \Pi_{k}^{\psi}  u\cdot \Pi_{l}^{\psi} a_z \|_A \leq& \| \delta_z  \Pi_{k}^{\psi}  u \|_A ~ \sup_{z} \|\Pi_{l}^{\psi} a_z \|_\infty\\ 
			\leq& \sup_{z} \|\Pi_{l}^{\psi}a_z\|_{\infty}~ \|\Pi_{k}^{\psi} u\|_A \l(1\wedge |z| \psi^{-1}(2^{k+1}) \r).
		\end{aligned}
	\end{equation}
	(i) If $\alpha=0$. Combining inequalities \eqref{eq:I1}, \eqref{eq:int-zJ(z)} and \eqref{eq:int-J(z)}, we get 
	\begin{equation}\label{eq:Ij1}
		\begin{aligned}
			& \int_{\mR^d}  \|I_j^{(1)}(z)\|_A J(z)\,\d z\leq \sum_{k\prec l\sim j} \int_{\mR^d}\big\| \delta_z  \Pi_{k}^{\psi}  u\cdot \Pi_{l}^{\psi} a_z \big\|_{A} J(z)\,\d z\\
			\overset{\eqref{eq:I1}}{\leq} \ \ \, & 2^{-\theta j} \sup_{z}\|a_z\|_{\sC^\theta_{\psi}} ~ \|u\|_{B^{\psi,1}_{A}}~ \sum_{k\prec l\sim j}  2^{-k}  \\
			&\l( \psi^{-1}(2^{k+1}) \int_{|z|\leq \frac{1}{\psi^{-1}(2^{k+1})}} |z|J(z)\,\d z +\int_{|z|> \frac{1}{\psi^{-1}(2^{k+1})}} J(z)\,\d z \r)\\
			\overset{\eqref{eq:int-zJ(z)},\eqref{eq:int-J(z)}}{\lesssim} & \|u\|_{B^{\psi,1}_{A}}~\sup_{z}\|a_z\|_{\sC^\theta_{\psi}}~ 2^{-\theta j}\sum_{k\prec l\sim j} 
			2^{-k} \l(\frac{2^k}{\psi^{-1}(2^{k+1})}+2^k\r) \\
			\lesssim & \|u\|_{B^{\psi,1}_{A}}~\sup_{z}\|a_z\|_{\sC^\theta_{\psi}}  ~ j 2^{-\theta j}. 
		\end{aligned}
	\end{equation}
	Similarly, noting that $\theta>0$, we have 
	\begin{equation}\label{eq:Ij2}
		\begin{aligned}
			\int_{\mR^d}  {\|I_j^{(2)}(z)\|_A}J(z)\,\d z \leq& \sum_{l\prec k\sim j} \int_{\mR^d}\big\| \delta_z  \Pi_{k}^{\psi}  u\cdot \Pi_{l}^{\psi} a_z \big\|_{A} J(z)\,\d z\\
			\lesssim & \|u\|_{B^{\psi,1}_{A}}~\sup_{z}\|a_z\|_{\sC^{\theta}_{\psi}}  ~ \sum_{l\prec k\sim j} 2^{-\theta l} 
			2^{-k}  \l(\frac{2^k}{\psi^{-1}(2^{k+1})}+2^k\r) \\
			\lesssim& \|u\|_{B^{\psi,1}_{A}}~\sup_{z}\|a_z\|_{\sC^{\theta}_{\psi}}, 
		\end{aligned}
	\end{equation}
	and 
	\begin{equation}\label{eq:Ij3}
		\begin{aligned}
			\int_{\mR^d}  {\|I_j^{(3)}(z)\|_A}J(z)\,\d z
			\lesssim &  \|u\|_{B^{\psi,1}_{A}}\sup_{z}\|a_z\|_{\sC^{\theta}_{\psi}} \sum_{l\sim k; k> j-2N}  2^{-\theta l}
			2^{-k}    \l(\frac{2^k}{\psi^{-1}(2^{k+1})}+2^k\r) \\
			\lesssim& \|u\|_{B^{\psi,1}_{A}}~\sup_{z}\|a_z\|_{\sC^{\theta}_{\psi}}~2^{-\theta j}. 
		\end{aligned}
	\end{equation}
	Combining \eqref{eq:PiLu}, \eqref{eq:Ij1}, \eqref{eq:Ij2} and \eqref{eq:Ij3}, we obtain \eqref{Eq:Lu-A1}. 
	
	(ii) When $\alpha>0$, by \eqref{eq:I1} and similar discussions in (i), one can see that 
	\begin{equation*}
		\begin{aligned}
			\int_{\mR^d}  \|I_j^{(1)}(z)\|_A J(z)\,\d z
			\lesssim & 2^{-\alpha j} \sup_{z}\|a_z\|_{\sC^\alpha_{\psi}}  ~ \|u\|_{B^{\psi,1+\theta}_{A}} \sum_{k\prec l\sim j} 
			2^{-(1+\theta)k}   \l(\frac{2^k}{\psi^{-1}(2^{k+1})}+2^k\r) \\
			\lesssim & 2^{-\alpha j} \|u\|_{B^{\psi,1+\theta}_{A}}~\sup_{z}\|a_z\|_{\sC^\alpha_{\psi}}, 
		\end{aligned}
	\end{equation*}
	\begin{equation*}
		\begin{aligned}
			\int_{\mR^d}  {\|I_j^{(2)}(z)\|_A}J(z)\,\d z\lesssim & 2^{-\alpha j}\|u\|_{B^{\psi,1+\alpha}_{A}}~\sup_{z}\|a_z\|_{\sC^{\theta}_{\psi}}  ~ \sum_{l\prec k\sim j} 2^{-\theta l} 
			2^{-k}    \l(\frac{2^k}{\psi^{-1}(2^{k+1})}+2^k\r)  \\
			\lesssim& 2^{-\alpha j}\|u\|_{B^{\psi,1+\alpha}_{A}}~\sup_{z}\|a_z\|_{\sC^{\theta}_{\psi}},
		\end{aligned}
	\end{equation*}
	and 
	\begin{equation*}
		\begin{aligned}
			\int_{\mR^d}  {\|I_j^{(3)}(z)\|_A}J(z)\,\d z
			\lesssim & \|u\|_{B^{\psi,1+\alpha}_{A}} ~ \sum_{l\sim k; k> j-2N} 
			2^{-(1+\alpha)k}   \l(\frac{2^k}{\psi^{-1}(2^{k+1})}+2^k\r) ~\sup_{z}\|\Pi_{l}^{\psi}a_z\|_{\infty} \\
			\lesssim& 2^{-\alpha j}\|u\|_{B^{\psi,1+\alpha}_{A}}~\sup_{x,z}|a(x,z)|. 
		\end{aligned}
	\end{equation*} 
	Combining the above estimates, we get \eqref{Eq:Lu-A2}. 
\end{proof}

\begin{lemma}\label{Le-Main1}
	Let $A$ be an $N$-function or $A=\infty$.  Suppose that \ref{Aspt:J-1} and \ref{Aspt:a-1}-\ref{Aspt:a-2}  are satisfied, then there are universal constants $\eps_1>0$ and $\lambda_0>0$  such that for each $\lambda\geq 2\lambda_0$ and $f\in B_A^{\psi, \beta}$ with $\beta\in [0,\alpha]$, the Poisson equation \eqref{Eq:PE} has a unique solution $u\in B^{\psi,1+\beta}_{A}$, provided that $|a(x,z)-a(0,z)|\leq \eps_1$ and $\lambda\geq \lambda_0$. Moreover, 
	\begin{equation}\label{Eq:apriori}
		\lambda \|u\|_{B^{\psi,\beta}_{A}}+\|u\|_{B^{\psi,1+\beta}_{A}} \leq C \|f\|_{B^{\psi, \beta}_{A}}, \ \mbox{ for each } \beta\in [0,\alpha]. 
	\end{equation} 
	In particular, it holds that   
	\begin{equation}
		\lambda \|u\|_{C^{\alpha}_{\rho}}+\|u\|_{C^{1+\alpha}_{\rho}} \leq C \|f\|_{C_{\rho}^{\alpha}}, 
	\end{equation}
	if $|a(x,z)-a(0,z)|\leq \eps_1$ and $\lambda\geq \lambda_0$, and $f\in C_{\rho}^{\alpha}$. 
\end{lemma}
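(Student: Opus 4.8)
The plan is to run a frozen-coefficient argument built on Theorem \ref{Thm:Main0}, using the commutator-type estimates \eqref{Eq:Lu-A1}--\eqref{Eq:Lu-A2} to absorb the perturbation and the method of continuity to supply existence. First I would set
\[
\cL_0 u(x):=\int_{\mR^d}\l(u(x+z)-u(x)\r)a(0,z)J(z)\,\d z .
\]
The coefficient $a(0,\cdot)$ is bounded, satisfies the integrability condition \eqref{cdt-k0} because of \ref{Aspt:J-3}, and obeys the lower bound \eqref{cdt-k1} with the same $\rho_0,c_0$ by \ref{Aspt-a-1}; hence Theorem \ref{Thm:Main0} furnishes a $\lambda_0>0$ such that $\lambda-\cL_0\colon B^{\psi,1+\beta}_A\to B^{\psi,\beta}_A$ is boundedly invertible and $\lambda\|v\|_{B^{\psi,\beta}_A}+\|v\|_{B^{\psi,1+\beta}_A}\le C_0\|(\lambda-\cL_0)v\|_{B^{\psi,\beta}_A}$ for $\lambda\ge\lambda_0$, with $C_0$ depending only on $d,\psi,\rho_0,c_0$ and, by inspection of its proof, not on $\beta$.

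Next I would note that $\cL_0-\cL$ is again of the form \eqref{Eq:Oprt}, with the bounded (sign-changing) coefficient $b(x,z):=a(0,z)-a(x,z)$. One has $\sup_z\|b(\cdot,z)\|_\infty\le\eps_1$ and, by \ref{Aspt-a-2} together with Theorem \ref{Thm:chart}, $\sup_z\|b(\cdot,z)\|_{\sC^{\alpha}_{\psi}}\lesssim c_0^{-1}$ (the constant $a(0,z)$ drops out of the H\"older seminorm). Since $\|g\|_{\sC^{0}_{\psi}}\lesssim\|g\|_\infty$, the interpolation Lemma \ref{Le-Inter} applied between $\sC^{0}_{\psi}$ and $\sC^{\alpha}_{\psi}$ yields, for every $\theta\in(0,\alpha)$,
\[
\sup_z\|b(\cdot,z)\|_{\sC^{\theta}_{\psi}}\lesssim \eps_1^{\,1-\theta/\alpha}\,c_0^{-\theta/\alpha},
\]
which can be made as small as we wish by shrinking $\eps_1$, the threshold depending only on $d,\psi,\theta,c_0$ and $C_0$.

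I would then establish \eqref{Eq:apriori} in two steps. For $\beta=0$, writing $(\lambda-\cL_0)u=(\lambda-\cL)u+(\cL_0-\cL)u$ and invoking \eqref{Eq:Lu-A1} with $\theta=\alpha/2$: once $\eps_1$ is small enough that $C_0$ times the constant in \eqref{Eq:Lu-A1} times $\sup_z\|b(\cdot,z)\|_{\sC^{\alpha/2}_{\psi}}$ is $\le\tfrac12$, the term $\|u\|_{B^{\psi,1}_A}$ is absorbed and $\lambda\|u\|_{B^{\psi,0}_A}+\|u\|_{B^{\psi,1}_A}\le 2C_0\|(\lambda-\cL)u\|_{B^{\psi,0}_A}$. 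For $\beta\in(0,\alpha]$, \eqref{Eq:Lu-A2} with exponent $\beta$ and $\theta=\beta/2$ gives $\|(\cL_0-\cL)u\|_{B^{\psi,\beta}_A}\le C\l(\|u\|_{B^{\psi,1+\beta}_A}\sup_z\|b(\cdot,z)\|_{\sC^{\beta/2}_{\psi}}+\|u\|_{B^{\psi,1+\beta/2}_A}\sup_z\|b(\cdot,z)\|_{\sC^{\beta}_{\psi}}\r)$; the first summand is absorbed into $\|u\|_{B^{\psi,1+\beta}_A}$ as before, while for the second I would interpolate $\|u\|_{B^{\psi,1+\beta/2}_A}\le\|u\|_{B^{\psi,1}_A}^{1/2}\|u\|_{B^{\psi,1+\beta}_A}^{1/2}$ (Lemma \ref{Le-Inter}), apply Young's inequality $ab\le\eps a^2+C_\eps b^2$ to peel off another $\tfrac14\|u\|_{B^{\psi,1+\beta}_A}$, and bound the leftover $\|u\|_{B^{\psi,1}_A}$ by the $\beta=0$ estimate together with $B^{\psi,\beta}_A\hookrightarrow B^{\psi,0}_A$. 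This gives \eqref{Eq:apriori}, hence uniqueness, for all $u\in B^{\psi,1+\beta}_A$. For existence I would run the method of continuity along $\cL_t:=(1-t)\cL_0+t\cL$, $t\in[0,1]$, whose coefficient $a_t(x,z):=(1-t)a(0,z)+t\,a(x,z)$ satisfies \ref{Aspt-a-1}--\ref{Aspt-a-2} with the same $c_0,\rho_0,\alpha$ by convexity, obeys $|a_t(x,z)-a_t(0,z)|=t|a(x,z)-a(0,z)|\le\eps_1$, and — the key point — has $a_t(0,z)=a(0,z)$, so freezing $\cL_t$ at the origin returns $\cL_0$ for every $t$; the argument above then applies verbatim to each $\cL_t$ with a $t$-independent constant, and since $\lambda-\cL_0$ is invertible the method of continuity produces the solution. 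The closing assertion is the case $A=\infty$, where $B^{\psi,s}_\infty=\sC^{s}_{\psi}$ coincides with $C^{s}_\rho$ (equivalent norms) by Theorem \ref{Thm:chart} for $s=\alpha$ and $s=1+\alpha$; one only needs to verify hypothesis \eqref{aspt-int-psi} for $\psi(\xi)=\phi(|\xi|^2)$, which follows from \eqref{eq:phi'-phi} (monotonicity of $r\mapsto r/\psi^s(r)$ for large $r$, where $\lambda\phi'/\phi\to0$) and from Karamata's theorem, Lemma \ref{Le-Karamata}(i), after the substitution $u=r^2$.

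The main obstacle I expect is the bookkeeping of constants in this scheme: one must check that $C_0$, the constants in \eqref{Eq:Lu-A1}--\eqref{Eq:Lu-A2}, and the smallness threshold for $\eps_1$ are all independent of $\lambda\ge\lambda_0$ and of $t\in[0,1]$, which is exactly why the $\beta=0$ estimate has to be proved before the $\beta\in(0,\alpha]$ one and why the interpolation/Young steps must be arranged so as not to reintroduce any $\lambda$-dependence. Verifying \eqref{aspt-int-psi} for the final corollary is routine given the slow-variation toolkit already set up.
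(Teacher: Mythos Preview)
Your approach is essentially the paper's: freeze at the origin, apply Theorem~\ref{Thm:Main0} to $\cL_0$, and absorb the perturbation $(\cL_0-\cL)u$ via \eqref{Eq:Lu-A1}--\eqref{Eq:Lu-A2}, with existence coming from the method of continuity. The paper organises the argument a bit differently: it proves the a~priori estimate only at the endpoints $\beta=0$ and $\beta=\alpha$ and then appeals to interpolation for intermediate $\beta$; moreover, for $\beta=\alpha$ it handles the lower-order term $\|u\|_{B^{\psi,1+\theta}_A}$ by interpolating between $B^{\psi,\alpha}_A$ and $B^{\psi,1+\alpha}_A$ and then taking $\lambda$ large to absorb the residual $\|u\|_{B^{\psi,\alpha}_A}$, rather than routing through the already-proved $\beta=0$ estimate as you do.

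One point to watch: your direct treatment of each $\beta\in(0,\alpha]$ via $\theta=\beta/2$ is fine for fixed $\beta$, but the constant in \eqref{Eq:Lu-A2} behaves like $1/\theta$ (from the geometric sum $\sum_k 2^{-\theta k}$ in the bound for $I_j^{(1)}$), so the smallness threshold you extract for $\eps_1$ degenerates like $\beta$ as $\beta\to 0^+$. Since the lemma asserts a \emph{universal} $\eps_1$ valid for all $\beta\in[0,\alpha]$, you should either switch to the paper's endpoint-plus-interpolation scheme (interpolating the resolvent $(\lambda-\cL)^{-1}$ between $B^{\psi,0}_A\to B^{\psi,1}_A$ and $B^{\psi,\alpha}_A\to B^{\psi,1+\alpha}_A$), or at least note that the endpoints suffice. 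Apart from this uniformity issue, your plan is correct and the concerns you flag about constants are exactly the right ones.
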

\begin{remark}
	Although the above analytic result relies on a special assumption regarding the oscillation of $a$, it is sufficient for us to establish our Theorem \ref{Thm:Main2}. In the case where $A(t)=\infty$ and $s>0$, the space $B^{\psi, s}_A$ can be identified as the generalized H\"older space $C^{s}_{\rho}$, which exhibits the following localization property:
	\[
	\|f\|_{C^{s}_{\rho}}\asymp_\eps \sup_{y\in \mR^d} \|f\eta((\cdot-y))\|_{C^{s}_{\rho}}, 
	\]
	where $\eta$ is some smooth cut-off function. The method of frozen coefficients, combined with the aforementioned result, can be employed to eliminate the requirement that the oscillation of $a$ is small. We will present a detailed proof of this approach for Theorem \ref{Thm:Main1}. However, it should be noted that the localization property can not extend to $B^{s}_{p,\infty}$ (see \cite{triebel1992theory}). As a result, it is unclear whether the assumption on the oscillation of $a$ can be entirely eliminated.
\end{remark}

\begin{proof}[Proof of Lemma \ref{Le-Main1}]
	When $a\equiv 1$, by the proof of Theorem \ref{Thm:Main0}, we see that if $f\in B^{\psi,\beta}_A$, then $G_\lambda f\in B^{\psi, 1+\beta}_A$ is a solution to \eqref{Eq:PE} ($\forall \lambda>0$), and estimate  \eqref{Eq:apriori} holds true. Through the continuity method, it is enough to prove \eqref{Eq:apriori}, under the premise that $u\in B^{\psi,1+\beta}_A$ is a solution to equation \eqref{Eq:PE}.   By interpolation theorem, we only need to show  \eqref{Eq:apriori} for $\beta=0$ and $\beta=\alpha$.
	
	Assume  $u\in B^{\psi,1+\alpha}_A$. Set $f=\lambda u-\cL u$,  
	\[
	a_0(z)=a(0,z) \ \mbox{ and }\ \cL_0 u(x) = \int_{\mR^d} (u(x+z)-u(x)) a_0(z)J(z)\,\d z. 
	\] 
	Thus,  
	\begin{equation*}
		\lambda u- \cL_0 u = (\cL-\cL_0) u +f. 
	\end{equation*}
	Using Theorem \ref{Thm:Main0}, we have 
	\begin{equation}\label{eq:u-f}
		\lambda \|u\|_{B^{\psi,\alpha}_{A}}+\|u\|_{B^{\psi,1+\alpha}_{A}}  \leq  C \l(\|(\cL-\cL_0) u\|_{B^{\psi,\alpha}_{A}}+\|f\|_{B^{\psi,\alpha}_{A}}\r). 
	\end{equation}
	Choosing $\theta\in (0, \alpha)$, we have 
	\[
	\|a(\cdot, z)-a_0(\cdot, z)\|_{\sC^{\theta}_{\psi}} \leq \|a(\cdot, z)-a_0(\cdot, z)\|_{\sC^{0}_{\psi}}^{1-\frac{\theta}{\alpha}}~\|a(\cdot, z)-a_0(\cdot, z)\|_{\sC^{\alpha}_{\psi}}^{\frac{\theta}{\alpha}} \leq c_0^{-1} \eps_1^{1-\frac{\theta}{\alpha}}.
	\]
	By \eqref{Eq:Lu-A2} and interpolation, we have 
	\begin{equation}\label{eq:L-L0}
		\begin{aligned}
			\|(\cL-\cL_0) u\|_{B^{\psi, \alpha}_A} \leq& C \eps_1^{1-\frac{\theta}{\alpha}} \|u\|_{B^{\psi, 1+\alpha}_A}+ C \|u\|_{B^{\psi,1+\theta}_A}\\
			\leq& C \eps_1^{1-\frac{\theta}{\alpha}} \|u\|_{B^{\psi, 1+\alpha}_A}+ C \|u\|_{B^{\psi,\alpha}_A}. 
		\end{aligned}
	\end{equation}
	Combining \eqref{eq:u-f} and \eqref{eq:L-L0}, and choosing $\lambda\gg 1$, we get 
	\[
	\lambda \|u\|_{B^{\psi,\alpha}_{A}}+\|u\|_{B^{\psi,1+\alpha}_{A}}  \leq C \eps_1^{1-\frac{\theta}{\alpha}} \|u\|_{B^{\psi, 1+\alpha}_A} +  C \|f\|_{B^{\psi, \alpha}_A}.
	\]
	Choosing $\eps_1$ sufficiently small, so that $C\eps_1^{1-\frac{\theta}{\alpha}}\leq 1/2$, then we obtain \eqref{Eq:apriori} for $\beta=\alpha$. 
	
	The case $\beta=0$ can be proved by using  \eqref{Eq:Lu-A1} and following the same procedure above.  So we complete our proof. 
\end{proof}

Now we are at the point of proving Theorem \ref{Thm:Main1}.
\begin{proof}[Proof of Theorem \ref{Thm:Main1}]
	Let $\chi$ be the nonnegative smooth function with compact support defined in Section \ref{sec-IDD}. For fixed $x_0\in \mR^d$, define
	\[
	\chi_\eps^{x_0}(x):=\chi\Big(\frac{x-x_0}{\eps}\Big) \mbox{ and }  a^{x_0}_\eps(x,z):=[a(x,z)-a(x_0,z)]\chi_\eps (x). 
	\]
	By definition, $a_\eps^{x_0}$ satisfies that 
	\begin{equation}\label{eq:Keps1}
		|a_\eps^{x_0}(x, z)|\lesssim 1
	\end{equation}
	and for every  $z\in \mR^d$ and $|x-x'|< 1$,  
	\begin{equation}\label{eq:Keps2}
		|a_\eps^{x_0}(x, z)-a_\eps^{x_0}(x',z)|\leq C_{\eps} \rho^{\alpha}(|x-x'|). 
	\end{equation} 
	For simplicity, we omit the superscript $x_0$ below. Let $v=u\chi_\eps $, 
	\[
	\delta_z f(x)= f(x+z)-f(x) \ \mbox{ and }\  \cL_0 u(x)= \int_{\mR^d} \delta_z  f(x)~a(x_0,z) J(z)\,\d z.
	\]
	We have
	\begin{equation}\label{loc}
		\begin{split}
			&\lambda v- \cL_0 v \\
			=&[f\chi_\eps -u\cL_0  \chi_\eps ]+(\cL u-\cL_0 u)\chi_\eps  -\l[\cL_0(u\chi_\eps )-(\cL_0 u)\chi_\eps -u(\cL_0 \chi_\eps )\r]\\
			=&: \sum_{i=1}^3 I_\eps^{(i)}. 
		\end{split}
	\end{equation}
	Obviously,
	\begin{equation}\label{eq:loc1}
		\| I_\eps^{(1)}\|_{C_{\rho}^{\alpha}} {\leq}\|f\chi_\eps \|_{C_{\rho}^{\alpha}}+\|u\cL_0\chi_\eps \|_{C_{\rho}^{\alpha}} \lesssim_\eps \|f\|_{C_{\rho}^{\alpha}}+\|u\|_{C_{\rho}^{\alpha}} .
	\end{equation}
	Noting that 
	\[
	I_\eps^{(2)}(x)= (\cL_0 u(x)-\cL u(x))\chi_\eps (x)=-\int_{\mR^d} \delta_z  u(x) a_\eps(x,z)\,J(z)\,\d z, 
	\]
	then by \eqref{eq:Keps1}, \eqref{eq:Keps2} and \eqref{Eq:Lu-A2}, for any $\theta\in(0,\alpha)$
	\begin{equation}\label{eq:loc2}
		\|I_\eps^{(2)}\|_{C_{\rho}^{\alpha}} 
		\leq  c_1\rho^\alpha(\eps)  \|u\|_{C^{1+\alpha}_{\rho}}+  C \|u\|_{C^{1+\theta}_{\rho}}, 
	\end{equation}
	where the constant $c_1$ is independent with $\eps$. For $I_\eps^{(3)}$, by definition 
	\begin{equation}\label{w-eps2}
		I_\eps^{(3)}(x)
		=\int_{\mR^d}\delta_z \chi_\eps (x)\ \delta_z u (x)~a(x_0,z) J(z)\,\d z
	\end{equation}
	and
	\begin{equation}\label{delta-w}
		\begin{split}
			I_\eps^{(3)}(x)-I_\eps^{(3)}(y)=& \int_{\mR^d} \delta_z \chi_\eps  (x)\big[\delta_z u (x)-\delta_z u (y)\big]~a(x_0,z)J(z)\,\d z\\
			&+ \int_{\mR^d} \big[\delta_z \chi_\eps  (x)-\delta_z \chi_\eps (y)\big]\delta_z u(y)~a(x_0,z)J(z)\,\d z.
		\end{split}
	\end{equation}
	By \eqref{w-eps2}, we have 
	\begin{align*}
		\big|I_\eps^{(3)}(x)|&\leq\|u\|_{\infty} \l( \int_{|z|\leq 1} \|\nabla \chi_\eps \|_{\infty} |z|a(x_0,z) J(z)\,\d z+2\int_{|z|>1} \|\chi_\eps \|_{\infty}  a(x_0,z) J(z)\,\d z \r)\\
		&\lesssim_\eps  \|u\|_{\infty}.
	\end{align*}
	By \eqref{delta-w}, one can see that 
	\begin{equation*}
		\begin{split}
			&\big|I_\eps^{(3)}(x)-I_\eps^{(3)}(y)\big|\\
			\lesssim_\eps& \rho^{\alpha}(|x-y|)~\|u\|_{C_{\rho}^{\alpha}} \l(\|\nabla \chi_\eps \|_{\infty} \int_{|z|\leq 1} |z| J(z)\,\d z + \|\chi_\eps \|_{\infty} \int_{|z|>1} J(z)\,\d z \r)\\
			&+ |x-y|\l(\|\nabla^2\chi_\eps \|_{\infty} \|u\|_{\infty} \int_{|z|\leq 1} |z|J(z)\,\d z+  \|\nabla\chi_\eps \|_{\infty}  \|u\|_{\infty} \int_{|z|>1} J(z)\,\d z \r)\\
			\lesssim_\eps& \rho^{\alpha}(|x-y|)~\|u\|_{C_{\rho}^{\alpha}}. 
		\end{split}
	\end{equation*}
	Therefore, 
	\begin{equation}\label{eq:loc3} \|I_\eps^{(3)}\|_{C_{\rho}^{\alpha}}\lesssim_\eps\|u\|_{C_{\rho}^{\alpha}}. 
	\end{equation}
	Using Theorem \ref{Thm:Main0}, and combining \eqref{eq:loc1}, we obtain that  
	\eqref{eq:loc2} and \eqref{eq:loc3}, 
	\begin{equation*}
		\begin{aligned}
			&\lambda \|u\|_{C^{\alpha}_{\rho}(B_{\eps/2}(x_0))}+\|u\|_{C^{1+\alpha}_{\rho}(B_{\eps/2}(x_0))}\\
			\leq& C \|v\|_{C^{1+\alpha}_{\rho}}\leq C \|v\|_{C_{\rho}^{\alpha}}+ C \|\cL_0 v\|_{C_{\rho}^{\alpha}}\\
			\leq& c_2\rho^\alpha(\eps)\|u\|_{C^{1+\alpha}_{\rho}}+ C \|u\|_{C^{1+\theta}_{\rho}}+C \|u\|_{C_{\rho}^{\alpha}}+C\|f\|_{{C^{\alpha}_{\rho}}},
		\end{aligned}
	\end{equation*}
	for all $\lambda\geq \lambda'_0(d, \alpha, \psi, c_0,\rho_0)>0$. Using the fact that 
	\[
	\|u\|_{C^{1+\alpha}_{\rho}} \leq \sup_{x_0\in \mR^d}\|u\|_{C^{1+\alpha}_{\rho}(B_{\eps/2}(x_0))}+ C_\eps \|u\|_\infty, 
	\]
	we obtain 
	\[
	\|u\|_{C^{1+\alpha}_{\rho}(B_{\eps/2}(x_0))}
	\leq c_2 \rho^\alpha(\eps) \sup_{x_0\in \mR^d}\|u\|_{C^{1+\alpha}_{\rho}(B_{\eps/2}(x_0))}+ C\l(\|u\|_{C^{1+\theta}_{\rho}}+ \|u\|_{C_{\rho}^{\alpha}}+\|f\|_{{C^{\alpha}_{\rho}}}\r).
	\]
	We then fix $\eps_0$ sufficiently small, such that
	$c_2\rho^\alpha(\eps)\leq 1/2$, so we arrive 
	\begin{align*}
		\sup_{x_0\in\mR^d} \l(\lambda \|u\|_{C^{\alpha}_{\rho}(B_{\eps_0/2}(x_0))}+ \|u\|_{C^{1+\alpha}_{\rho} (B_{\eps_0/2}(x_0))}\r) \leq C\l(\|u\|_{C^{1+\theta}_{\rho}}+ \|u\|_{C_{\rho}^{\alpha}}+\|f\|_{C^{\alpha}_{\rho}}\r).
	\end{align*}
	This yields 
	\begin{align*}
		\lambda\|u\|_{C_{\rho}^{\alpha}}+\|u\|_{C^{1+\alpha}_{\rho}} \leq& C_{\eps_0} \sup_{x_0\in\mR^d} \l(\lambda \|u\|_{C^{\alpha}_{\rho}(B_{\eps_0/2}(x_0))}+\|u\|_{C^{1+\alpha}_{\rho}(B_{\eps_0/2}(x_0))}\r)\\
		\lesssim&  \l(\|u\|_{C^{1+\theta}_{\rho}}+ \|u\|_{C_{\rho}^{\alpha}}+\|f\|_{{C^{\alpha}_{\rho}}}\r). 
	\end{align*}
	By interpolation, one sees that 
	\[
	\lambda\|u\|_{C_{\rho}^{\alpha}}+\|u\|_{C^{1+\alpha}_{\rho}}\lesssim \|f\|_{{C^{\alpha}_{\rho}}}, 
	\]
	for all $\lambda\geq  \lambda_0(d, \alpha, \psi, c_0,\rho_0)>0$. 
\end{proof}

\begin{proof}[Proof of Corollary \ref{Cor:Log}]
	Let $S$ be the gamma subordinator, whose Laplace exponent is given by  $\phi(s)=\log(1+s)$. By Proposition \ref{Prop:j-regular}, one sees $j(r)\asymp r^{-d}, r\to0$. Let 
	\[
	a'(x,z)= |z|^{-d}j^{-1}(|z|)\1_{B_1}(z)~a(x,z). 
	\]
	Then  \(a'(x,z)\geq c_0\), and  $\|a'(\cdot,z)\|_{C_{\rho}^{\alpha}}\lesssim \|a(\cdot,z)\|_{C_{\rho}^{\alpha}}\leq C$, where  $\rho(r)=\frac{1}{\psi(r^{-1})}=\frac{1}{\log(1+r^{-2})}$. Noting that  
	\[
	L u(x)= \int_{B_1} \l(u(x+z)-u(x)\r)\frac{a(x,z)}{|z|^d}\,\d z=\int_{\mR^d} \l(u(x+z)-u(x)\r)~ a'(x,z)J(z)\,\d z, 
	\]
	by Theorem \ref{Thm:Main1}, we obtain our assertion. 
\end{proof}

\section{Martingale problem}\label{Sec-MP}
In this section, we study the martingale problem associated with $\cL$. We need to introduce some necessary terminology.

Let $D= D([0, \infty), \mR^d)$ be the Skorokhod space of c\`adl\`ag $\mR^d$-valued trajectories 
and let $X_t=X_t(\omega)=\omega_t~( \omega\in D)$, be the canonical process on it. Set 
\[
\cD_t=\bigcap_{\eps>0}\sigma(X_s, 0\leq s\leq t+\eps), \quad \cD=\sigma(\cD_t, t\geq 0). 
\]

\begin{definition}[Martingale problem]
	Let $\mu$ be a probability measure on $\mR^d$. A probability measure $\mP$ on $(D, \cD)$ is said to be a solution to the martingale problem for $(\cL, \mu)$ if $\mP\circ X_0^{-1}=\mu$ and for each $f\in C_b^\infty$, 
	\[
	M^f_t:= f(X_t)-f(x)-\int_0^t \cL f(X_s)\,\d s \ \mbox{ is a $\cD_t$-martingale under $\mP$.} 
	\]
\end{definition}

Below we briefly introduce  the SDE   corresponding  to the martingale problem $(\cL, \mu)$. Let $(\Omega, \bP, \cF)$ be a probability space and $N(\d r, \d z, \d s)$ be a Poisson random measure on $\mR_+\times \mR^d\times \mR_+$ with intensity measure $\d r\,J(z)\d z\,\d s$.
Consider the following SDE driven by Poisson random measure $N$:
\begin{equation}\label{Eq:Nsde}
	X_t=x+\int_0^t \int_{\mR^d}\int_0^\infty z\1_{ [0,a(X_{s-},z))}(r) N(\d r, \d z, \d s).
\end{equation}
If $|a(x, z)-a(x,z)|\leq C |x-y|$ for any $x, y\in \mR^d$, then 
\begin{equation*}
	\begin{aligned}
		\int_{\mR^d} \int_0^\infty |z|^2 \l( \1_{[0, a(x,z))}(r)-\1_{[0, a(y,z))}(r) \r) ^2 \d r~J(z)\d z \leq C |x-y|^2. 
	\end{aligned}
\end{equation*} 
This implies \eqref{Eq:Nsde} admits a unique strong solution (see \cite{applebaum2009levy}). By It\^o's formula, we see that 
\begin{align*}
	f(X_t)-f(x)=&\int_0^t \int_{\mR^d}\int_0^\infty\l( f\l(X_{s-}+  z\1_{[0,a(X_{s-},z))}(r)\r) - f(X_{s-}) \r) N(\d r, \d z, \d s)\\
	=&\int_0^t  \int_{\mR^d}\l(f(X_{s-}+z) -f(X_{s-})\r){a(X_{s-},z)} ~ J(z)\d z~\d s \\
	&+ \int_0^t  \int_{\mR^d}  \int_0^\infty  \l( f(X_{s-}+z) -f(X_{s-})\r)\1_{[0, a(X_{s-},z)]}(r) \widetilde{N}(\d r,\d z,\d s),\\
	=& \int_0^t \cL f(X_{s-})\d s + M^f_t, 
\end{align*}
where $\widetilde{N}(\d r, \d z, \d t)=N(\d r, \d z, \d t) -\d r J(z)\d z \d t $ and $M^f_t$ is a martingale. Therefore, when $x\mapsto a(x,z)$ is Lipschitz continuous (uniformly in $z$), $\bP\circ X^{-1}$ is a martingale solution to $(\cL, \delta_x)$. 

\smallskip

Now we are at the point of proving Theorem \ref{Thm:Main2}. 
\begin{proof}[Proof of Theorem \ref{Thm:Main2}]
	\noindent {\bf Existence:} The proof for the existence is standard, for the convenience of the reader, we
	give the details here. Let $\eta\in C^\infty_c(B_1)$ satisfying $\int \eta=1$. Set $\eta^n(x)=n^d\eta(nx)$ and $a^n(x,z)=\int a(x-y,z)\eta_n(y)\,\d y$. Let $X^n_t$ be the unique strong solution to  \eqref{Eq:Nsde} ($a$ is replaced by $a^n$), and $\mP^n_x := \bP\circ( X^n)^{-1}$. By the discussion above, $\mP^n_x$ is a martingale solution associated with $(\cL^n, \delta_x)$, and 
	\[
	\cL^n f(x)=\int_{\mR^d} \l(f(x+z)-f(x)\r) a^n(x,z)J(z)\,\d z. 
	\]
	We claim that $\{\mP^n_x\}$ is tight in $\sP(D)$. Then upon taking a subsequence, still denoted by $n$, we can assume that $\mP_x^n\Rightarrow \mP_x$. For any $0\leq s_1\leq \cdots \leq s_k \leq s\leq t$, $f\in C_c^\infty(\mR^d)$ and $h_i\in C_c^\infty(\mR^d)$, $i\in \{1,2\cdots,k\}$, we have  
	\begin{align*}
		&\bE_{\mP_x} \l\{\l[f(X_t)-f(X_s) -\int_s^t \cL f(X_u)\,\d u\r] \prod_{i=1}^k h_i(X_{t_i}) \r\}\\
		=& \lim_{n\to\infty} \bE \l\{\l[f(X^n_t)-f(X^n_s) -\int_s^t \cL f(X_u^n)\,\d u\r] \prod_{i=1}^k h_i(X_{t_i}^n) \r\}\\
		=& \lim_{n\to\infty} \bE \l\{\l[f(X^n_t)-f(X^n_s) -\int_s^t \cL^nf(X_u^n)\,\d u\r] \prod_{i=1}^k h_i(X_{t_i}^n) \r\}\\
		&+ \lim_{n\to\infty} \bE \l\{\l[\int_s^t (\cL^n-\cL) f(X_u^n)\,\d u \r]\prod_{i=1}^k h_i(X_{t_i}^n) \r\}=0. 
	\end{align*}
	Here we use the fact that $\bP\circ (X^n)^{-1}$ is a solution to $(\cL^n, \delta_x)$, and $(\cL^n-\cL)f\to 0$ uniformly (since $a^n\to a$ uniformly). Thus, $\mP_x$ is a martingale solution to $(\cL, \delta_x)$. 
	
	It remains to show $\{\mP^n_x\}$ is tight. Given a bounded stopping time $\tau$, we define 
	\[
	N_{\tau}(\d z) = \int_\tau^{\tau+1} \int_0^{c_0^{-1}} N(\d r, \d z, \d s). 
	\]
	$N_{\tau}(\d z)$ is a Poisson random measure on $\mR^d$ with intensity measure $c_0^{-1}J(z)\d z$. 
	\begin{align*}
		&\bP (|X^n_{\tau+\delta}-X^n_{\tau}|>\eps) \\
		\leq& \bP (|X^n_{\tau+\delta}-X^n_{\tau}|>\eps; N_{\tau}(B_M^c)=0)+ \bP(N_{\tau}(B_M^c)\geq 1)\\
		\leq & \eps^{-1}\bE \l|\int_{\tau}^{\tau+\delta}\int_{B_M}\int_0^{a(X_{s-},z)} z N(\d r,\d z, \d s)\r|+ \bP(N_{\tau}(B_M^c)\geq 1)\\
		\leq& \frac{\delta}{c_0\eps} \int_{|z|\leq M} |z|J(z)\d z+ c_0^{-1}\int_{|z|>M} J(z)\d z. 
	\end{align*}
	Letting $\delta\to0$ and then $M\to0$, we get $\sup_{n, \tau}\bP (|X^n_{\tau+\delta}-X^n_{\tau}|>\eps)\to 0 ~(\delta\to 0)$. Similarly, one can also see that $\sup_{n}\bP(|X^n|>M)\to 0 ~(M\to \infty)$. By Aldous tightness criterion, we get the tightness of $\{\mP^n_x\}$. 
	
	\noindent  {\bf Uniqueness:} For any $f\in C_b^\infty$, consider equation \eqref{Eq:PE} in $C_\rho^{1+\alpha}$. By Theorem \ref{Thm:Main1}, equation \eqref{Eq:PE} admits a unique solution $u\in C^{1+\alpha}_{\rho}$. Suppose that $(\mP^{i}_{x}, X_t)\ (i=1,2)$ are two solutions to martingale problem $(\cL, \delta_x)$, one can easily  verify that 
	\[
	M^u_t= u(X_t)-u(x)-\int_0^t \cL u(X_t)\,\d t
	\]
	is a martingale under $\mP_{x}^{i}$. This implies 
	\[
	\begin{aligned}
		\d ( \e^{-\lambda t} u(X_t) )=& - \e^{-\lambda t} (\lambda-\cL) u(X_t)\,\d t + \e^{-\lambda t}\,\d M^u_t\\
		=& -\e^{-\lambda t}f(X_t)\,\d t+ \e^{-\lambda t}\d M^u_t. 
	\end{aligned}
	\]
	Taking expectation, one sees 
	\[
	u(x)= \mE_{x}^{i} \int_0^\infty \e^{-\lambda t} f(X_t)\,\d t, \quad i=1,2. 
	\]
	Thanks to \cite[Theorem 4.2]{ethier2009markov}, $\mP_{x}^{1}=\mP_{x}^{2}$.  
	
	\noindent {\bf Krylov-type estimate:} Let $\delta\in (0,1\wedge \rho_0/10)$ such that 
	\[
	\rho^\alpha(2\delta)<\eps_1,  
	\]
	where $\eps_1$ is the same constant in Lemma \ref{Le-Main1}. For each $y\in \mR^d$, let
	\[
	a^y(x,z)= \l\{
	\begin{aligned}
		&a(x,z) \quad &\mbox{ if } x\in B_\delta(y), \\
		&a\l(y+\frac{\delta^2(x-y)}{|x-y|^2},z\r) \quad &\mbox{ if } x\in B_\delta^c(y). 
	\end{aligned}
	\r.
	\]
	Then $a^y$ satisfies \eqref{aspt:a-lower},  \eqref{Aspt:a-holder} and 
	\[
	|a^y(x,z)-a^y(y,z)|<\eps_1.
	\]
	Let 
	\[
	\cL^y u(x)=\int_{\mR^d} \l(u(x+z)-u(x)\r)~a^y(x,z)J(z)\,\d z. 
	\]
	Since $a^y$ also satisfies conditions \ref{Aspt:a-1} and \ref{Aspt:a-2}, by Theorem \ref{Thm:Main2} (a), for each $y\in \mR^d$, the martingale problem  $(\cL^y, \delta_y)$ admits a unique solution $(\mQ_y, X_t)$. Moreover, for each $y\in \mR^d$ and $f\in C_c^\infty$, by the proof for Theorem \ref{Thm:Main2} (a), we have 
	\begin{equation}\label{eq:EQ1}
		\bE_{\mQ_y}\int_0^\infty \e^{-\lambda t} f(X_t)\,\d t = u_{a^y}(y), 
	\end{equation}
	where $u_{a^y}$ is the solution to \eqref{Eq:PE} with $\cL$ replaced by $\cL^y$. Noting that $a^y$ meets all the assumptions in Lemma \ref{Le-Main1}, we have 
	\begin{equation}\label{eq:u-kappay}
		\|u_{a^y}\|_{B^{\psi,1}_A}\leq C \|f\|_{B^{\psi,0}_A}\leq C \|f\|_A. 
	\end{equation}
	Combining and \eqref{eq:EQ1} and \eqref{eq:u-kappay} and using Theorem \ref{Thm:Morrey2}, we get
	\begin{equation}
		\bE_{\mQ_y}\int_0^\infty \e^{-\lambda t} f(X_t)\,\d t \overset{\eqref{eq:EQ1}}{\leq} C \|u_{a^y}\|_{\sC_\psi^{\frac{\eps}{1+\eps}}} \overset{\eqref{Eq:Morrey2}}{\leq} C \|u_{a^y}\|_{B^{\psi,1}_A} \overset{\eqref{eq:u-kappay}}{\leq} C  \|f\|_A. 
	\end{equation}
	
	Now let $\tau_1=\inf\{t>0: |X_t-X_0|>\delta\}$ and $\tau_{k+1}=\tau_{k}+\tau_1\circ \theta_{\tau_{k}}$. It holds that 
	\[
	\mQ_y|_{\sF_{\tau_1}}= \mP_y|_{\sF_{\tau_1}}, 
	\]
	due to the fact that  $a^y(\cdot, z)|_{B_{\delta}(y)}=a(\cdot,z)|_{B_{\delta}(y)}$ and the  uniqueness of martingale problem $(\cL, \delta_y)$. Suppose that  $(\mP_{\mu}, X_t)$ is the unique martingale solution to $(\cL, \mu)$, then for any non-negative function $f\in C_c^\infty$, 
	\begin{equation*}
		\begin{aligned}
			&\mE_{\mu} \int_{0}^{\tau_1} \e^{-\lambda t} f(X_t)\,\d t =\int_{\mR^d} \mu(\d y)~ \mE_y\int_{0}^{\tau_1}   \e^{-\lambda t} f(X_t)\,\d t \\
			=& \int_{\mR^d} \mu(\d y)~\bE_{\mQ_y} \int_0^{\tau_1}  \e^{-\lambda t} f(X_t)\,\d t
			\leq \sup_y \bE_{\mQ_y} \int_0^{\infty}  \e^{-\lambda t} f(X_t)\,\d t \lesssim \|f\|_A.
		\end{aligned}
	\end{equation*}
	By strong Markov property, 
	\begin{equation*}
		\begin{aligned}
			\mE_{\mu} \int_{\tau_k}^{\tau_{k+1}} \e^{-\lambda t} f(X_t)\,\d t=& \mE_{\mu} \int_{\tau_k}^{\tau_{k+1}} \e^{-\lambda t} f(X_t)\,\d t = \mE_{\mu}  \int_{0}^{\tau_1\circ\theta_{\tau_k}} \e^{-\lambda (\tau_k+t)} f(X_t\circ\theta_{\tau_k})\,\d t \\
			=&\mE_{\mu} \e^{-\lambda \tau_k} \l( \int_{0}^{\tau_1} \e^{-\lambda t} f(X_t)\,\d t \r)\circ\theta_{\tau_k}\\
			=&\mE_{\mu} \l( \e^{-\lambda \tau_k} \mE_{X_{\tau_k}}   \int_{0}^{\tau_1} \e^{-\lambda t} f(X_t)\,\d t \r)\\
			\lesssim& \|f\|_A \mE_{\mu} \e^{-\lambda \tau_{k}} \leq \|f\|_A~\sup_{y} \mE_y \e^{-\lambda \tau_{k}}. 
		\end{aligned}
	\end{equation*}
	To estimate $\sup_{y} \mE_y \e^{-\lambda \tau_{k}}$, using strong Markov property again, we have 
	\begin{equation*}
		\begin{aligned}
			\mE_x \e^{-\lambda \tau_{k+1}} =& \mE_x \e^{-\lambda (\tau_{k}+\tau_1\circ\theta_{\tau_{k}})} = \mE_x \l[\e^{-\lambda \tau_k} \mE_{x}\l( \e^{-\lambda \tau_1}\circ\theta_{\tau_k}|\sF_{\tau_k}\r) \r]\\
			=&\mE_x \l[\e^{-\lambda \tau_k} \mE_{X_{\tau_{k}}} \e^{-\lambda \tau_1} \r] \leq \l(\sup_y\mE_y \e^{-\lambda \tau_1}\r) \mE_x \e^{-\lambda \tau_k}\\
			\leq & \cdots \leq \l(\sup_y\mE_y \e^{-\lambda \tau_1}\r)^{k+1}. 
		\end{aligned}
	\end{equation*}
	Thus, 
	\[
	\begin{aligned}
		\mE_{\mu} \int_{0}^{\infty} \e^{-\lambda t} f(X_t)\,\d t\lesssim& \sum_{k=0}^\infty  \mE_{\mu} \int_{\tau_k}^{\tau_{k+1}} \e^{-\lambda t} f(X_t)\,\d t\\
		\lesssim&\|f\|_A \sum_{k=1}^\infty \l( \sup_{y} \mE_y \e^{-\lambda \tau_k} \r)\leq  \|f\|_A \sum_{k=1}^\infty \l(\sup_y\mE_y \e^{-\lambda \tau_1}\r)^{k}. 
	\end{aligned}
	\]
	So the desired conclusion follows if $\mE_y \e^{-\lambda \tau_1}\leq 1/2$ for all $y$. To achieve this, we choose $g\in C_b^\infty$, which  satisfies $g(x)=0$ if $x\in B_{\delta/2}$, $g(x)=1$ if $x\in B_{\delta}$ and $\|\nabla g\|_\infty\leq C \delta^{-1}$. Let $g_y(x)=g(x-y)$. It is easy to verify that there exists a constant $K_\delta<\infty$ such that 
	\[
	\cL g_y(x) -\lambda g_y(x) \leq \| \cL g_y \|_\infty \leq K_\delta.   
	\]
    Note that
	\[
	\mE_y \l[ \e^{-\lambda \tau_1} g_y(X_{\tau_1}) \r] -g_y(y) = \mE_y \int_0^\infty \e^{-\lambda s}(\cL g_y-\lambda g_y)(X_s)\,\d s. 
	\]
	
	Since $g_y(y)=g(0)=0$ and $g_y(X_{\tau_1})=1$, $\mP_y$-a.s., we have  
	\[
	\mE_y \e^{-\lambda \tau_1} \leq K_\delta /\lambda. 
	\]
	Choosing $\lambda\geq2K\vee \lambda_0$, we have $\sup_y\mE_y \e^{-\lambda \tau_1}\leq 1/2$. Therefore, 
	\[
	\mE_{\mu} \int_{0}^{\infty} \e^{-\lambda t} f(X_t)\,\d t\lesssim \|f\|_A \sum_{k=1}^\infty \frac{1}{2^k}\lesssim \|f\|_A, \quad \mbox{ for all 
	} \lambda\geq 2K\vee \lambda_0.  
	\]
	For any $\lambda>0$ and $\mu\in \cP(\mR^d)$, 
	\[
	\begin{aligned}
		\mE_{\mu} \int_{0}^{\infty} \e^{-\lambda t} f(X_t)\,\d t \leq& \sup_{y} \mE_y \int_0^1 f(X_t)\,\d t \cdot \sum_{k=0}^\infty \e^{-\lambda k} \\
		\lesssim& \lambda^{-1} \sup_{y} \mE_y \int_0^\infty \e^{-(2K\vee \lambda_0) t} f(X_t)\,\d t \lesssim \|f\|_A/\lambda. 
	\end{aligned}
	\]
	So we complete our proof. 
\end{proof}

\begin{remark}\label{rmk:SIO}
	For diffusion operators, Stroock and Varadhan showed that if the diffusion coefficients are uniformly elliptic and continuous, then the corresponding martingale problem is well-posed (see for instance, \cite{stroock2007multidimensional}). Two key ingredients proving the uniqueness are: 
	\begin{enumerate}
		\item $(I-\Delta)^{-1}L^p \hookrightarrow L^\infty$,  $p>d/2$; 
		\item the $L^p$ boundedness of the Riesz transforms (singular integrals). 
	\end{enumerate}
	Following the approach of Stroock and Varadhan to obtain a similar result for $\cL$ when the coefficient $a$ is merely uniformly continuous in $x$, one would need to establish the boundedness of singular integrals in the Orlicz space $L_A$, which satisfies $(I+\phi(-\Delta))^{-1} L_A \hookrightarrow L^\infty$. Unfortunately, the following example demonstrates that if $A$  grows rapidly, even the Hilbert transform (the archetypal singular integral operator) may not be bounded in $L_A$: let $d=1$ and $H$ be the Hilbert transform. Set $I=(0,1)$ and $A(t)=\e^{t^2}-1$. Then  
	$$
	H\chi_{I}(x)=\log \l|\frac{x}{(x-1)}\r|. 
	$$
	For each $\lambda>0$,  we have 
	\[
	\int A(|H\chi_I|/\lambda)=\int_0^\infty A'(t) \l|\l\{x: H\chi_{I}(x)>\lambda t\r\}\r| \d t \geq \int_1^\infty \frac{\e^{t^2}}{(\e^{\lambda t}-1)} \d t =\infty. 
	\]
	Thus, it appears that we may need to explore new frameworks to address this problem when $a$ does not satisfies  \ref{Aspt:a-2}. 
\end{remark}

We conclude this section by providing the proof of Corollary \ref{Cor:SDE}.
\begin{proof}[Proof of Corollary \ref{Cor:SDE}]
    The existence of a weak solution to \eqref{Eq:SDE} follows from standard arguments under the assumption that the coefficient is continuous and bounded.
    
    For the uniqueness, let $S$ be the gamma subordinator, whose Laplace exponent is given by  $\phi(s)=\log(1+s)$. Under the assumption that \(\sigma\) is non-degenerate, as presented in Example \ref{ex:sde}, the generator operator \(\cL_\sigma\) associated with \eqref{Eq:SDE} is given by \eqref{Eq:L-sde}. In Section \ref{subsec:subordinator}, we have verified that the jump kernel of the variance gamma process and the function \(a\) satisfy conditions \ref{Aspt:J-1} and \ref{Aspt:a-1}-\ref{Aspt:a-2}, respectively. Since any weak solution to \eqref{Eq:SDE} is also a solution to martingale problem $(\cL_\sigma, \delta_x)$, in the light of Theorem \ref{Thm:Main2}, \eqref{Eq:SDE} admits a unique solution. 
    
    For \eqref{Eq:Krylov2}, set $A(t)=\e^{t^\beta}-1$ with $\beta>1$. Then $[\psi^{-1}(t^{1+\eps})]^d= (\e^{t^{1+\eps}}-1)^{d/2}\lesssim A(t)$, for all $t\gg 1$ and some $\eps\in (0, \beta-1)$. Thanks to Theorem \ref{Thm:Main2} (inequality \eqref{Eq:Krylov1}), we obtain that 
    \[
	\begin{aligned}
            \mE_x \int_0^\infty \e^{-\lambda t} f(X_t)\,\d t\leq& \frac{C}{\lambda} \inf\l\{ \lambda>0: \int_{\mR^d} A\l({f(x)}/{\lambda}\r) \d x\leq 1\r\}\\
            =& \frac{C}{\lambda} \inf\l\{ \lambda>0: \int_{\mR^d} \l( \exp\l[\l(f(x)/\lambda\r)^\beta\r]-1 \r) \d x\leq 1\r\}. 
	\end{aligned}  
    \]
\end{proof}

\appendix
\section{Remarks on a theorem by R. Bass}\label{Appendix-stable}

\setcounter{equation}{0}
\renewcommand\theequation{A.\arabic{equation}}

This section extends the main results of \cite{bass2009regularity} on stable-like operators using Littlewood-Paley theory and scaling techniques, and emphasizes that it is more natural to study a priori estimates in the $\sC^s$ space. We also observe that the function space $X$ mentioned in Section \ref{sec-approach} (aimed to replace $\sC^s$) contains unbounded and discontinuous functions, so it is not suitable for the problems we care about. We will also show that techniques that have proven to be successful for stable-like operators can not yield satisfactory results for our main problem of concern. 

To begin, we review some fundamental concepts from classical Littlewood-Paley theory. Let $\chi$ be the same smooth function defined in Section \ref{Sec-OB}. 
$$
\varphi(\xi):=\chi(\xi)-\chi(2\xi).
$$
It is easy to see that $\varphi\geq 0$,  
\begin{equation}\label{eq:phi=1}
	\mathrm{supp}\, \varphi\subset \cC:=B_{1}\setminus B_{\frac{3}{8}} ~\mbox{ and }~ \varphi(x)=1 \mbox{ if } x\in B_{\frac{3}{4}}\backslash B_{\frac{1}{2}}. 
\end{equation}
Operators $\Delta_j$ is defined by
\begin{align*}
	\Delta_j f:=
	\left\{
	\begin{aligned}
		&\cF ^{-1}(\chi(2\cdot) \cF  f), & \quad j=-1, \\
		&\cF ^{-1}(\varphi(2^{-j}\cdot) \cF  f),& \quad j\geq 0. 
	\end{aligned}
	\right. 
\end{align*}

\begin{definition}[H\"older-Zygmund space]
	Assume that $s\in \mR$, let $\sC^s$ denote the collection of all distribution $f$ satisfying 
	$$
	\|f\|_{\sC^s}:= \sup_{j\geq -1} 2^{js} \|\Delta_j f\|_\infty <\infty. 
	$$
\end{definition}
The following result is well-known. 
\begin{theorem}[{\cite[Theorem  2.36]{bahouri2011fourier}}]\label{Thm:chart0}
	Assume $s>0$ and $s\notin \mN$, then $\sC^s$ is the usual H\"older space and 
	\[
	\|f\|_{\sC^s}\asymp \|f\|_\infty+ \sup_{x\neq y} \frac{|\nabla^{[s]}f(x)-\nabla^{[s]}f(y)|}{|x-y|^{s-[s]}}.
	\]
\end{theorem}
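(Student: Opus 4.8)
The plan is to establish the two inclusions $\sC^s\hookrightarrow C^s$ and $C^s\hookrightarrow\sC^s$ separately, using only the classical Littlewood--Paley apparatus: the Bernstein estimate $\|\nabla^k\Delta_j f\|_\infty\lesssim 2^{jk}\|\Delta_j f\|_\infty$ (proved exactly as \eqref{Eq:Bernstein}, with $\Pi_j^\psi$ replaced by $\Delta_j$), the reconstruction identity $f=\sum_{j\ge-1}\Delta_j f$ in $\sS'$, and the fact that $\varphi$ is supported away from the origin. Since $s>0$ forces $\sum_j\|\Delta_j f\|_\infty<\infty$ whenever $\|f\|_{\sC^s}<\infty$, this sum converges uniformly to a bounded continuous function that agrees with $f$ in $\sS'$, so no regularization issues arise.

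For $\sC^s\hookrightarrow C^s$, assume $\|f\|_{\sC^s}<\infty$. From Bernstein, $\|\nabla^{[s]}\Delta_j f\|_\infty\lesssim 2^{j[s]}\|\Delta_j f\|_\infty\lesssim 2^{j([s]-s)}\|f\|_{\sC^s}$, and since $[s]-s<0$ the series $\sum_j\nabla^{[s]}\Delta_j f$ converges uniformly; hence $\nabla^{[s]}f$ is bounded and continuous, and likewise $\|f\|_\infty\lesssim\|f\|_{\sC^s}$. Writing $g=\nabla^{[s]}f$ and combining $\|\Delta_j g\|_\infty\lesssim 2^{j([s]-s)}\|f\|_{\sC^s}$ with $\|\nabla\Delta_j g\|_\infty\lesssim 2^{j(1+[s]-s)}\|f\|_{\sC^s}$, I would bound, for $0<|x-y|\ll1$,
\[
|g(x)-g(y)|\le\sum_{j\ge-1}\min\bigl(2\|\Delta_j g\|_\infty,\ |x-y|\,\|\nabla\Delta_j g\|_\infty\bigr),
\]
and split the sum at the index $J$ with $2^J\asymp|x-y|^{-1}$; the low frequencies contribute $\lesssim|x-y|\sum_{j\le J}2^{j(1+[s]-s)}\lesssim|x-y|\,2^{J(1+[s]-s)}\lesssim|x-y|^{s-[s]}$ (here $1+[s]-s>0$), and the high frequencies contribute $\lesssim\sum_{j>J}2^{j([s]-s)}\lesssim 2^{J([s]-s)}\lesssim|x-y|^{s-[s]}$ (here $[s]-s<0$). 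This is precisely where $s\notin\mN$ enters: both exponents must be nonzero so that each geometric series is comparable to its extreme term.

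For $C^s\hookrightarrow\sC^s$, assume $M:=\|f\|_\infty+\sup_{x\ne y}|\nabla^{[s]}f(x)-\nabla^{[s]}f(y)|/|x-y|^{s-[s]}<\infty$. The block $\Delta_{-1}f$ is a convolution of $f$ with a fixed Schwartz function, so $\|\Delta_{-1}f\|_\infty\lesssim\|f\|_\infty$. For $j\ge0$ write $\Delta_j f(x)=\int \Phi_j(z)f(x-z)\,\d z$ with $\Phi_j(z)=2^{jd}\,\check\varphi(2^j z)$, $\check\varphi:=\cF^{-1}\varphi$. Because $\mathrm{supp}\,\varphi\subset B_1\setminus B_{3/8}$, every derivative of $\varphi$ vanishes at the origin, hence $\int\Phi_j(z)z^\alpha\,\d z=0$ for every multi-index $\alpha$; subtracting the degree-$[s]$ Taylor polynomial of $f$ about $x$ therefore does not change the integral, and after rewriting the integral Taylor remainder so that only differences $\partial^\alpha f(x-tz)-\partial^\alpha f(x)$ with $|\alpha|=[s]$ appear, one obtains $\bigl|f(x-z)-\sum_{|\alpha|\le[s]}\tfrac{(-z)^\alpha}{\alpha!}\partial^\alpha f(x)\bigr|\lesssim M\,|z|^s$. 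Hence $|\Delta_j f(x)|\lesssim M\int 2^{jd}|\check\varphi(2^j z)|\,|z|^s\,\d z=M\,2^{-js}\int|\check\varphi(w)|\,|w|^s\,\d w\lesssim M\,2^{-js}$, i.e. $2^{js}\|\Delta_j f\|_\infty\lesssim M$, which gives $\|f\|_{\sC^s}\lesssim M$. I expect the geometric summations of the first inclusion to be routine; the genuinely delicate step is the second inclusion, namely organising the Taylor remainder so that it is controlled purely by the H\"older seminorm of $\nabla^{[s]}f$ (peeling off the \emph{full} degree-$[s]$ polynomial and expressing the leftover through differences of the $[s]$-th derivatives), hand in hand with the moment cancellation of $\check\varphi$, where the support condition on $\varphi$ is used.
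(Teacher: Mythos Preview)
The paper does not prove this statement; it is quoted verbatim as a well-known result from \cite{bahouri2011fourier} and no argument is given. Your proof is correct and is precisely the standard one found in that reference: Bernstein plus a low/high-frequency split for $\sC^s\hookrightarrow C^s$, and moment cancellation of $\check\varphi$ combined with the degree-$[s]$ Taylor remainder for $C^s\hookrightarrow\sC^s$. It is also worth noting that your argument is the exact classical template on which the paper's own proof of the generalized analogue (Theorem~\ref{Thm:chart}) is modelled; there the authors carry out the same two steps for the $\psi$-blocks in the case $s=1$, where only the zeroth moment of $\varrho_j$ is needed.
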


Let $\nu$ be a non-degenerate $\alpha$-stable measure, i.e., for each $\xi\in \mR^d\backslash \{0\}$,
\[
\int_{\mS^{d-1}} |\sigma\cdot \xi|^2 \Sigma(\d \sigma)>0.
\]
Here $\Sigma$ is the spectral measure of $\nu$, i.e. 
\[
\nu(\d z)= \frac{\d r}{r^{1+\alpha}} \Sigma(\d \sigma), \quad z=r\sigma. 
\]
Let $a$ is a bounded positive  function on $\mR^d\times\mR^d$. The operator $\sL_a$ is defined by 
\[
\sL_a u(x)= \int_{\mR^d} \l(u(x+z)-u(x)-\nabla u(x)\cdot z \1_{\alpha\in (1,2)}\r)~a(x,z)\nu(\d z) 
\]
when $\alpha\in (0,2)$ and $\alpha\neq 1$. If $\alpha=1$,  we always assume $\int_{\mS^{d-1}} \sigma ~ a(x,r\sigma)\Sigma(\d \sigma)=0$ and define 
\[
\sL_a u(x)= \int_{\mR^d} \l(u(x+z)-u(x)-\nabla u(x)\cdot z \1_{B_1}(z)\r)~a(x,z)\nu(\d z). 
\]

The following result is a generalization of Proposition 4.2 and Proposition 4.3 in \cite{bass2009regularity}. Note that here we do not need to assume $\nu$ is absolutely continuous with respect to Lebesgue measure. 
\begin{lemma}\label{Lem-A-schauder}
	Let $c_0\in (0,1)$. Suppose $\nu$ is a non-degenerate $\alpha$-stable measure. Assume that $a$ only depends on $z$ and $c_0\leq a\leq c_0^{-1}$, then for each $\lambda \geq 1, \beta\in \mR$, there is a constant $C=C(d, \nu, \alpha, \beta, c_0)$ such that 
	\begin{equation}\label{eq:schauder-stable}
		\lambda \|u\|_{\sC^\beta} + \|u\|_{\sC^{\alpha+\beta}}\leq C \|\lambda u-\sL_{a} u\|_{\sC^\beta}. 
	\end{equation}
\end{lemma}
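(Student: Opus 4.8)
The strategy is to prove \eqref{eq:schauder-stable} by passing through the Littlewood--Paley blocks $\Delta_j$ and estimating the action of the resolvent of $\sL_a$ on each block, exploiting that for $x$-independent coefficients $\sL_a$ is the generator of a genuine $\alpha$-stable L\'evy process, hence enjoys exact parabolic scaling. First I would reduce to the case $\beta=0$ by the usual Fourier-multiplier/commutation argument: since $\Delta_j$ commutes with the constant-coefficient operator $\sL_a$ and with the resolvent $G_\lambda=(\lambda-\sL_a)^{-1}$, one has $\Delta_j u = G_\lambda \Delta_j f$ with $f=\lambda u-\sL_a u$, so it suffices to bound $\|\Delta_j G_\lambda g\|_\infty$ in terms of $\|\Delta_j g\|_\infty$ for a single frequency-localized $g$; the weight $2^{j\beta}$ then just rides along. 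The heart of the matter is therefore the single-block estimate
\begin{equation*}
	\|\Delta_j G_\lambda g\|_\infty \leq \frac{C}{\lambda + 2^{\alpha j}}\,\|\Delta_j g\|_\infty, \qquad j\geq -1,
\end{equation*}
which immediately yields both $\lambda\|u\|_{\sC^0}\leq C\|f\|_{\sC^0}$ and $2^{\alpha j}\|\Delta_j u\|_\infty\leq C\|\Delta_j f\|_\infty$, i.e. $\|u\|_{\sC^\alpha}\leq C\|f\|_{\sC^0}$.

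To prove the single-block estimate I would write $G_\lambda g = \int_0^\infty e^{-\lambda t}P_t g\,\mathrm{d}t$ where $P_t$ is the semigroup of the $\alpha$-stable process $Y$ with L\'evy measure $a\,\nu$, and use $\Delta_j = \Delta_j \widetilde\Delta_j$ (fattened block) to write $\Delta_j P_t g = P_t(\varphi_j^{\vee} * \Delta_j g)$ up to the usual rescaled bump functions, so that by Young's inequality $\|\Delta_j P_t g\|_\infty \leq \|P_t \tilde\varrho_j\|_1\,\|\Delta_j g\|_\infty$ where $\tilde\varrho_j(x) = 2^{jd}\tilde\varrho(2^j x)$. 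The key point is then the decay estimate $\int_0^\infty e^{-\lambda t}\|P_t\tilde\varrho_j\|_1\,\mathrm{d}t \lesssim (\lambda+2^{\alpha j})^{-1}$. By $\alpha$-stable scaling, $P_t \tilde\varrho_j = (P_{2^{\alpha j}t}\tilde\varrho_0)$ dilated at scale $2^{-j}$, so $\|P_t\tilde\varrho_j\|_1$ depends on $(t,j)$ only through $s=2^{\alpha j}t$, and one is reduced to showing $\|P_s\tilde\varrho_0 - (\text{something})\|_1 \lesssim 1\wedge s^{-1}$ as $s\to\infty$ together with boundedness for $s$ small; the $1\wedge s^{-1}$ gain comes from the fact that $\tilde\varrho_0$ has vanishing low frequencies (its Fourier transform is supported away from the origin), so $\|P_s\tilde\varrho_0\|_1$ decays like the inverse of the ``mass'' $\psi(1)s = c\,s$ accumulated by the symbol, exactly as in the proof of Step 1 of Theorem \ref{Thm:Main0} but now with the clean power $\psi(r)=r^\alpha$ instead of a slowly varying exponent. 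Integrating $e^{-\lambda t}(1\wedge 2^{\alpha j}t)^{-1}$ against $\mathrm{d}t$ gives the claimed $(\lambda+2^{\alpha j})^{-1}$.

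A few technical points need care. For $\alpha\in[1,2)$ the operator $\sL_a$ contains the gradient correction term; since $a$ depends only on $z$, $\sL_a$ is still a Fourier multiplier with symbol $-\mathrm{Re}\,\widehat{a\nu\text{-Lévy exponent}}(\xi)$ which is comparable to $|\xi|^\alpha$ by non-degeneracy of $\nu$ (the spectral measure condition), so the semigroup picture and scaling are unaffected; in the $\alpha=1$ case the hypothesis $\int_{\mathbb S^{d-1}}\sigma\,a(x,r\sigma)\Sigma(\mathrm{d}\sigma)=0$ is precisely what makes the drift vanish and keeps the symbol even, so the same argument runs. Also one must handle $j=-1$ separately, but there $2^{\alpha j}$ is a harmless constant and the bound $\|P_t\tilde\varrho_{-1}\|_1 \leq \|\tilde\varrho_{-1}\|_1$ suffices. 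Finally, to promote the a priori estimate to all $\beta\in\mathbb R$ one just notes that $\sC^\beta$ is defined by the same blocks with weight $2^{j\beta}$ and that all the block estimates above are uniform in $j$; alternatively one invokes Lemma \ref{Le-Inter}-type interpolation. The main obstacle I anticipate is making the decay $\|P_s\tilde\varrho_0\|_1 \lesssim 1\wedge s^{-1}$ fully rigorous --- this requires either a subordination representation of $Y$ (writing the $\alpha$-stable process as a subordinated Brownian motion when $\nu$ is rotationally invariant, which is not assumed here) or a direct estimate on $\|\nabla^k p_s^{Y}\|_1$-type quantities for the anisotropic stable density; the cleanest route is to argue as in Theorem \ref{Thm:Main0} Step 1 via $\mathbb P(|Y_s|\le R^{-1})$-type bounds using $\mathbb E e^{i\xi\cdot Y_s}=e^{-s\mathrm{Re}\,\psi_a(\xi)}$ with $\mathrm{Re}\,\psi_a(\xi)\asymp|\xi|^\alpha$, which avoids any absolute continuity assumption on $\nu$, consistent with the remark following the lemma statement.
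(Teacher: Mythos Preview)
Your approach is genuinely different from the paper's, and while it is natural (it mirrors the semigroup argument of Theorem~\ref{Thm:Main0}), it runs into a real obstacle that you flag but do not resolve.

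The paper does \emph{not} go through the resolvent/semigroup at all. Instead it proves the single-annulus estimate $\|v\|_\infty \le C\|\sL_a v\|_\infty$ for $\mathrm{supp}\,\hat v\subset\cC$ by a compactness--contradiction argument: assuming failure gives sequences $v_n$ (uniformly in $C^k$ by Bernstein) and $a_n\in\cA$; extracting limits $v$ and $a_0$ yields $\sL_{a_0}v=0$, hence $\hat v=0$ since $\mathrm{Re}\,\psi_{a_0}>0$ off the origin, contradicting $\mathrm{supp}\,\hat v\subset\cC$ (after translating so that $|v_n(0)|\ge 1/2$). Then exact $\alpha$-stable scaling upgrades this to $\|u\|_\infty\le C2^{-\alpha j}\|\sL_a u\|_\infty$ on the $j$-th annulus, and the block estimate follows. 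This argument uses only the non-degeneracy condition (positivity of $\mathrm{Re}\,\psi_a$), no regularity of the symbol and no density for the transition law---which is precisely the point emphasized after the lemma about not needing $\nu\ll \mathrm{Leb}$.

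Your route requires $\int_0^\infty e^{-\lambda t}\|P_t\tilde\varrho_j\|_1\,\d t\lesssim(\lambda+2^{\alpha j})^{-1}$, i.e.\ essentially $\|P_s\tilde\varrho_0\|_1\lesssim 1\wedge s^{-1}$. The ``cleanest route'' you propose---reproducing the $I_1,I_2$ bounds of Theorem~\ref{Thm:Main0} via $\bP(|Y_s|\le R^{-1})$---does not carry over: those bounds in the paper use the subordinator $S_t$ explicitly (Chernoff on $S_t$ for $I_1$, the potential density $\mu_\lambda$ of $S$ for $I_2$), and there is no subordination available for a general anisotropic stable process. The alternative, controlling $\|P_s\tilde\varrho_0\|_1$ by writing $P_s\tilde\varrho_0=p_s*\tilde\varrho_0$ and exploiting $\int\tilde\varrho_0=0$, needs $\|\nabla p_1\|_1<\infty$ (or equivalently enough smoothness of $\psi_a$ to integrate by parts in $\xi$), which is not guaranteed for an arbitrary spectral measure $\Sigma$. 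So as written your plan has a gap exactly at the step you identify as ``the main obstacle''. The compactness argument is what buys the full generality here.
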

\begin{proof}
	We only prove the case $\alpha\neq 1$ here. 
	
	\noindent {\em Step 1}. 
	Let 
	\[
	\cA=\l\{a\in L^\infty(\mR^d): c_0\leq a(z)\leq c_0^{-1}, z\in \mR^d\r\}. 
	\]
	We first prove that for any $v\in \sS'(\mR^d)$ satisfying $\mathrm{supp}~\hat{v}\subseteq \cC=B_{1}\setminus B_{\frac{3}{8}}$, it holds that 
	\begin{equation}\label{eq:sLv}
		\|v\|_{\infty} \leq C\inf_{a\in \cA}\|\sL_a v\|_{\infty}. 
	\end{equation}
	Here $C$ is a constant independent of on $v$. 
	
	Assume \eqref{eq:sLv} does not hold. Then, there is a sequence $v_n$ such that 
	$\widehat{v_n}$ is supported on $\cC$, and a sequence $a_n\in \cA$ such that 
	\begin{equation}\label{eq:1-Lvn}
		1=\|v_n\|_\infty\geq n \|\sL_{a_n} v_n\|_\infty. 
	\end{equation}
	Let $h=\sF^{-1}(\chi(\cdot/2)-\chi(4\cdot))$, where $\chi$ is the same function in Section \ref{Sec-OB}. Noting that $(\chi(\cdot/2)-\chi(4\cdot)) \widehat{v_n}=\widehat{v_n}$, we have 
	$$
	v_n(x)=\int_{\mR^d} h(x-y) v_n(y)\d y. 
	$$
	So for any $k\in \mN$,
	\begin{equation}\label{eq:vn-bdd}
		\|\nabla^k v_n\|_\infty=\|\nabla^k h* v_n\|_\infty\leq \|\nabla^k h\|_{1}\|v_n\|_\infty\leq C_k.
	\end{equation}
	By Ascoli-Azela's lemma and  diagonal argument, there is a subsequence of $\{v_n\}$ (still denoted by $v_n$ for simplicity) and $v\in C_b^\infty$ such that $\nabla^k v_n$ converges to $\nabla^k v$ uniformly on any compact set. Let $\chi_R(\cdot)=\chi(\cdot/R)$. For any $\phi\in \sS(\mR^d)$,
	\begin{align*}
		\left|\int \phi(v_n-v)\right|\leq& \int |\phi\chi_R\cdot (v_n-v) | + \int |\phi(1-\chi_R) (v_n-v) |\\
		\leq& \|\phi\|_{L^1} \|v_n-v\|_{L^\infty(B_{3R/2})} + 2 \sup_{|x|>R} |\phi(x)|.
	\end{align*}
	Letting $n\to \infty$ and then $R\to\infty$, we get
	$$
	\<\phi, v_n\>\to \<\phi, u\>, \quad \forall \phi\in \sS(\mR^d).
	$$
	Here $\<,\>$ denotes the dual pair of $\sS(\mR^d)$ and $\sS'(\mR^d)$. That is to say $v_n\to v$ in $\sS'(\mR^d)$ and consequently, $\hat{v_n}\to \widehat{v}$ in $\sS'(\mR^d)$. For any $\phi\in \sS(\mR^d)$ supported on $\mR^d\backslash\cC$, we have
	$$
	\<\phi, \hat{v}\>=\lim_{n\to\infty} \<\phi, \widehat{v_n}\>=0,
	$$
	which means $\hat{v}$ is also supported on $\cC$. 
	
	On the other hand, note that $a_n \nu$ is a sequence of Radon measures on $\mR^d\backslash \{0\}$, and $\sup_{n} \int_{K} a_n(z)\nu(\d z)\leq c_0^{-1} \nu(K)<\infty$ for each compact subset $K$ in $\mR^d\backslash \{0\}$. Thus, there is a subsequence of $a_n$ (still denoted
	by $a_n$ for simplicity) such that
	\begin{equation}\label{eq:an-a0}
		a_n\nu\overset{v}{\longrightarrow}\nu_0 \ \mbox{ and } \ a_0 := \d \nu_0/\d \nu \in [c_0, c_0^{-1}]. 
	\end{equation} 
	Using \eqref{eq:1-Lvn}-\eqref{eq:an-a0} and the fact that $\nabla^k v_n\to \nabla^k v$ uniformly on each compact subset, we have 
	\begin{equation*}
		\begin{aligned}
			\sL_{a_0} v(x)=\ &\int_{\mR^d} \l(v(x+z)-v(x)-\nabla v(x)\cdot z \1_{\alpha\in (1,2)}\r)~a(z)\nu(\d z)\\
			=\ &\lim_{n\to\infty}\l|\int_{\mR^d} \l(v_n(x+z)-v_n(x)-\nabla v_n(x)\cdot z \1_{\alpha\in (1,2)}\r)~a_n(z) \nu(\d z)\r|\\
			\overset{\eqref{eq:1-Lvn}}{\leq}& \lim_{n\to\infty}\frac{\|v_n\|_\infty}{n} 
			=0. 
		\end{aligned}
	\end{equation*}
	This implies  
	$0=\widehat{\sL_{a_0} v}=\psi_{a_0} \hat{v}$. Since for $\xi\neq 0$, 
	\begin{equation*}
		\begin{aligned}
			\Re (\psi_{a_0}(\xi)) =&\int_{\mR^d} (1-\cos(z\cdot \xi))~a_0(z)\nu(\d z)\gtrsim  \int_{|z|\leq \frac{\pi}{4|\xi|}} |z\cdot\xi|^2 \nu(\d z)\\
			=&\int_0^{\frac{\pi}{4|\xi|}}r^{1-\alpha} \d r\int_{\mR^{d-1}} |\sigma\cdot \xi|^2 \Sigma(\d \sigma)>0, 
		\end{aligned}
	\end{equation*} 
	we get that the support of $\hat{v}$ is the origin. This contradicts the previous conclusion that $\mathrm{supp}\  \hat{v}\subseteq \cC$. So we complete the proof for \eqref{eq:sLv}. 
	
	\noindent {\em Step 2}.  
	Suppose $u\in \sS'(\mR^d)$ and $\mathrm{supp}\ \hat{u}\subseteq \lambda \cC$. Let $v(x):= u(\lambda^{-1} x)$ and $a^\lambda(z)=a(\lambda^{-1} z)$. By the scaling property of $\nu$, we get 
	\begin{equation*}
		\begin{aligned}
			\sL_{a^{\lambda}} v(x)=&
			\int_{\mR^d}\l(u(\lambda^{-1} x+\lambda^{-1} z)- u(\lambda^{-1} x)-\nabla u(\lambda^{-1}(x)) \cdot (\lambda^{-1} z) \1_{\alpha\in(1,2)} \r) ~ a^{\lambda}(z)\nu(\d z)\\
			=& 
			\int_{\mR^d}\l(u(\lambda^{-1} x+z')- u(\lambda^{-1} x)-\nabla u(\lambda^{-1}(x)) \cdot z' \1_{\alpha\in(1,2)} \r)~a(z')\lambda^{-\alpha}\nu(\d z')\\
			=& \lambda^{-\alpha}(\sL_a u)(\lambda^{-1} x). 
		\end{aligned}
	\end{equation*}
	Thus, by \eqref{eq:sLv}, we get 
	\begin{equation}\label{eq:u-Lu}
		\|u\|_\infty= \|v\|_\infty\leq C  \|\sL_{a^{\lambda}} v\|_\infty =C\lambda^{-\alpha} \|\sL_a u\|_\infty, 
	\end{equation}
	provided that $\mathrm{supp}\ \hat{u}\in \lambda \cC$. 
	
	\noindent {\em Step 3}. Like the proof for Theorem \ref{Thm:Main0}, it is easy to see that for each $j\geq -1$,
	\[ 
	\lambda \|\Delta_j u\|_\infty \leq \|\lambda \Delta_j u-\sL_a \Delta_j u\|_\infty. 
	\]
	By \eqref{eq:u-Lu}, for any $j\geq 0$, 
	\begin{align*}
		2^{j\alpha}\|\Delta_j u\|_\infty\leq & C \|\sL_a \Delta_j u\|_\infty \leq C \lambda \|\Delta_j u\|_\infty +  C \|\lambda \Delta_j u-\sL_a \Delta_j u\|_\infty\\
		\leq &  \|\lambda \Delta_j u-\sL_a \Delta_j u\|_\infty. 
	\end{align*}
	Thus, for all $\lambda \geq 1$ and $\beta\in \mR$, we get \eqref{eq:schauder-stable}. 
\end{proof}

\begin{theorem}
    Let $\nu$ be a non-degenerate $\alpha$-stable measure with \(\alpha\in (0,2)\). Suppose that there exist constants \(c_0\) and \(\gamma\in (0,1)\), such that $c_0\leq a\leq c_0^{-1}$ and \(\sup_{z\in \mR^d}\|a(\cdot,z)\|_{\sC^\gamma}\leq c_0^{-1}\), then for any $\lambda >0$ and $\beta\in [0,\gamma]$, there is a constant $C=C(d, \nu, \alpha, \beta, \gamma, c_0)$ such that 
    \begin{equation*}
        \lambda \|u\|_{\sC^\beta} + \|u\|_{\sC^{\alpha+\beta}}\leq C \|\lambda u-\sL_{a} u\|_{\sC^\beta}. 
    \end{equation*}
\end{theorem}
With Lemma \ref{Lem-A-schauder} at hand, utilizing the method of frozen coefficients as before, we can prove the case that $a$ is a positive function on $\mR^d\times\mR^d$.  Since the proof for this result is just a repetition of the proof of Theorem \ref{Thm:Main1}, we omit it here.

\smallskip

We point out that the approach used in the proof of Lemma \ref{Lem-A-schauder} cannot be directly extended to the case of $\nu(\mathrm{d}z)=\frac{\1_{B_1}(z)}{|z|^d}\mathrm{d}z$, and it may not be possible. One significant hurdle arises from the need to obtain the desired regularity estimate, which requires the use of the $\psi$-decomposition with $\psi(R)\asymp \log R\ (R\to\infty)$. However, when attempting to repeat the same procedure, it becomes apparent that {\em Step 2} in the proof for Lemma \ref{Lem-A-schauder} is invalid in the current situation.

\section{A remark on space \texorpdfstring{$\sX^s$}{}}\label{Appendix-exmple}

\setcounter{equation}{0}
\renewcommand\theequation{B.\arabic{equation}}

In this section, we show that $\sX^s$ given by \eqref{Eq:X} may contains unbounded discontinuous functions, even if $s>0$. 

\begin{proposition}
	Let $\sX^1$ be the function space given by \eqref{Eq:X}. Then $\sX^1\not\subseteq L^\infty$. 
\end{proposition}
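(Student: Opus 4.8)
The plan is to write down an explicit tempered distribution $f\in\sX^1$ that does not agree almost everywhere with any bounded function. The construction is a lacunary superposition of dilated bumps; the point is that the weight $(2+j)$ in the definition of $\sX^1$ is borderline, being just too weak to force boundedness (whereas $\sum_j\|\Delta_j f\|_\infty<\infty$ would suffice).

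First I would fix $\phi:=\cF^{-1}\varphi$, where $\varphi=\chi(\cdot)-\chi(2\cdot)$ is the Littlewood--Paley bump from Appendix~\ref{Appendix-stable}: it is radial, $C_c^\infty$, $\varphi\ge0$, $\varphi\not\equiv0$, and $\mathrm{supp}\,\varphi\subset\mathcal C=B_1\setminus B_{3/8}$. Then $\phi\in\sS(\mR^d)$ is real, $\cF\phi=\varphi$ vanishes near the origin so $\int_{\mR^d}\phi=\cF\phi(0)=0$, and $\phi(0)$ is a positive multiple of $\int\varphi$, hence $\phi(0)>0$. Define
\[
f:=\sum_{k=1}^\infty\frac1k\,\phi\bigl(2^{3k}\,\cdot\bigr).
\]
Because $\int\phi=0$, one has $|\langle\phi(2^{3k}\cdot),\psi\rangle|\lesssim_\psi 2^{-3k(d+1)}$ for every $\psi\in\sS$, so $\sum_k\frac1k\langle\phi(2^{3k}\cdot),\psi\rangle$ converges absolutely and the series defines a tempered distribution; by the rapid decay of $\phi$ it also converges uniformly on compact subsets of $\mR^d\setminus\{0\}$ to a $C^\infty$ function $g$, and $f=g$ as distributions on $\mR^d\setminus\{0\}$.

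Next I would check $f\in\sX^1$. Since $\cF[\phi(2^{3k}\cdot)]$ is supported in $\{3\cdot2^{3k-3}\le|\xi|\le2^{3k}\}$ while $\Delta_j$ localizes Fourier support to $\{3\cdot2^{j-3}\le|\xi|\le2^{j}\}$ for $j\ge0$ (and to $\{|\xi|\le\tfrac12\}$ for $j=-1$), a comparison of endpoints shows these overlap only when $j\in\{3k-1,3k,3k+1\}$. Hence for each fixed $j$ at most one index $k$ contributes to $\Delta_j f$, that $k$ satisfies $k\ge(j-1)/3$, and $\Delta_j f=0$ for $j\le1$. Using $\|\Delta_j h\|_\infty\le\|\cF^{-1}\varphi\|_1\|h\|_\infty$ and $\|\phi(2^{3k}\cdot)\|_\infty=\|\phi\|_\infty$, we get $\|\Delta_j f\|_\infty\le\|\cF^{-1}\varphi\|_1\|\phi\|_\infty/k\le C/(j-1)$ for $j\ge2$, and therefore $\sup_{j\ge-1}(2+j)\|\Delta_j f\|_\infty<\infty$, i.e.\ $f\in\sX^1$.

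Finally I would show $f\notin L^\infty$ by testing $g$ along $x_N:=2^{-3N}e$ for a fixed unit vector $e$. Writing $m=N-k$,
\[
g(x_N)=\sum_{k\ge1}\frac1k\,\phi\bigl(2^{3(k-N)}e\bigr)=\sum_{m=1}^{N-1}\frac{\phi(2^{-3m}e)}{N-m}+\frac{\phi(e)}{N}+\sum_{l\ge1}\frac{\phi(2^{3l}e)}{N+l};
\]
the last two terms are bounded uniformly in $N$, and since $\phi(2^{-3m}e)\to\phi(0)>0$ as $m\to\infty$, the first sum is at least $\tfrac{\phi(0)}{2}\sum_{i\le N-m_0}\tfrac1i-O(1)\ge\tfrac{\phi(0)}{2}\log N-C$, so $g(x_N)\to\infty$. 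As $g$ is continuous on $\mR^d\setminus\{0\}$, it exceeds any given constant on a set of positive measure; hence if $f$ coincided a.e.\ with some $h\in L^\infty$, then $h=g$ a.e.\ on $\mR^d\setminus\{0\}$ (because $f=g$ there as distributions), which is impossible. There is no analytic difficulty here; the only points requiring care are the choice of lacunarity $2^{3k}$ (large enough that each $\Delta_j$ meets only one summand, yet with $k\asymp j$ so that $1/k\asymp1/(j+2)$) and the distribution-versus-$L^\infty$ bookkeeping at the end.
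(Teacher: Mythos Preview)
Your proof is correct and follows essentially the same approach as the paper's: both construct a lacunary superposition $\sum c_k\,\phi(\lambda_k\cdot)$ with $\phi$ a Schwartz function whose Fourier transform is supported in an annulus and with $\phi(0)>0$, choosing $c_k\asymp 1/k$ so that each dyadic block sees a single term of size $\asymp 1/j$ while the values near the origin accumulate a divergent harmonic sum. The only cosmetic difference is that the paper picks $\phi$ with a narrower Fourier-support annulus and takes $\lambda_j=2^j$, whereas you keep the standard bump $\varphi$ and compensate with the extra lacunarity $\lambda_k=2^{3k}$; the paper then bounds $f(x)$ below by $c\log\log|x|^{-1}$ for all small $x$, while you test along the sequence $x_N=2^{-3N}e$, but both yield the same conclusion.
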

\begin{proof}
	We provide an example to demonstrate that  $\sX^1$ encompasses unbounded,  discontinuous functions. 
	
	Let $\phi\in \sS(\mR^d)$ such that $\cF(\phi)\in C_c^\infty(B_{\frac{3}{4}}\backslash B_{\frac{1}{2}})$ , $\cF(\phi)\in [0,1]$ and $\cF(\phi)=1$ on $B_{\frac{11}{16}}\backslash B_{\frac{9}{16}}$. Define 
	\begin{align*}
		f(x)=\left\{
		\begin{aligned}
			&\sum_{j\geq 1} j^{-1} \phi(2^jx),& \quad & x\neq 0\\
			&0,& \quad &x=0. 
		\end{aligned} 
		\right.
	\end{align*}
	Due to \eqref{eq:phi=1}, $\cF(\phi) \varphi = \cF(\phi)$.  Thus,  
	\begin{align*}
		\cF(f)(\xi)\varphi(2^{-j}\xi)=&j^{-1}2^{-j}(\cF(\phi)\varphi)(2^{-j}\xi)\\
		=&j^{-1}2^{-j}\cF(\phi)(2^{-j}\xi).  
	\end{align*}
	This yields    
	\[
	\sup_{j\geq -1} \max\{1, j\}\|\Delta_j f\|_\infty=  \sup_{j\geq 1} \|\phi (2^j \cdot)\| = \|\phi\|_\infty<\infty.
	\]
	Therefore, $f\in \sX^1$. 
	
	On the other hand, since $\phi\in\sS(\mR^d)$ and 
	\[
	|\phi(x)|=\l| \int_{\mR^d} \cF(\phi)(\xi) \e^{i 2\pi x\cdot \xi} \d \xi \r| \leq \int_{\mR^d} \cF(\phi)(\xi) \d \xi=\phi(0)>0, 
	\]
	there are constants $\eps_0, \delta_0\in (0,2^{-10})$ such that $\phi(x)\geq 2\eps_0, \mbox{ when } |x|\leq \delta_0$. Now put $\delta:=\delta_0^{10}$. For any $x\in B_\delta\backslash \{0\}$, set $j_x:= [-\log_2 |x|]$. It holds that 
	\begin{equation}\label{eq:f1}
		C\log j_x \geq \phi(0) \sum_{1\leq j\leq j_x} j^{-1}\geq  \sum_{1\leq j\leq j_x+\log_2 \delta_0} j^{-1} \phi(2^jx) \geq \eps_0 \log j_x. 
	\end{equation}
	Noting that $|\phi(2^j x)|\leq C (2^j |x|)^{-N}$ if $j\geq j_x$, we get 
	\begin{equation}\label{eq:f2}
		\sum_{j> j_x} |j^{-1} \phi(2^jx)| \leq C \sum_{j> j_x} j^{-1} 2^{-jN} |x|^{-N} \leq  C j_x^{-1}\ll 1.
	\end{equation}
	Moreover, 
	\begin{equation}\label{eq:f3}
		\sum_{j_x+\log_2\delta_0<j\leq j_x} |j^{-1} \phi(2^jx)| \leq -C\log_2 \delta_0 . 
	\end{equation}
	Combining \eqref{eq:f1}-\eqref{eq:f3}, one sees that for any $x\in B_\delta\backslash\{0\}$, 
	\[
	f(x)\geq \eps_0 \log j_x -C \geq \frac{\eps_0}{2} \log\log_2 |x|^{-1}-C 
	\]
	and 
	\[
	|f(x)|\leq C \log \log_2 |x|^{-1}+C. 
	\]
	This implies $f\notin L^\infty$ and $f\in L^1(B_\delta)$. 
	
	To sum up, we see that $f\in (\sX^1\cap L^1_{\mathrm{loc}})\backslash L^\infty$. 
\end{proof}

\medskip

\paragraph{\bf Acknowledgements }The authors would like to thank Huyuan Chen and Jaehoon Kang for many useful conversations. 

\paragraph{\bf Foundings }Research Eryan is supported by National Key R\&D Program of China (No. 2022YFA1006000) and by the National Natural Science Foundation of China (No. 12171354); Research Guohuan is supported by the National Natural Science Foundation of China (Nos. 12288201, 12271352, 12201611)

\paragraph{\bf Data availability }The manuscript has no associated data.

\bigskip

\paragraph{\Large{\bf{Declarations}}}

\paragraph{\bf Conflict of interest} On behalf of all authors, the corresponding author states that there is no conflict of
interest.

\end{document}